\newif\ifdebug                                                      %
\newcommand{\printname}[1]
   {\smash{\makebox[0pt]{\hspace{-1.0in}\raisebox{8pt}{\tiny #1}}}}
\newcommand{\labell}[1] {\label{#1}{\ifdebug{\printname{#1}}\fi}}
\def \v {{\lambda;\delta_1,\ldots,\delta_k}}
\def \R  {{\mathbb R}}
\def \Z  {{\mathbb Z}}
\def \N  {{\mathbb N}}
\def \CP {{{\mathbb C}{\mathbb P}}}
\def \CP {{\mathbb C}{\mathbb P}}
\def \calH {{\mathcal H}}
\def \calE {{\mathcal E}}
\def    \FS  {\text{FS}}
\def    \eps    {\epsilon}
\def    \ol {\overline}
\def\eoe{\unskip\ \hglue0mm\hfill$\between$\smallskip\goodbreak}
\DeclareMathOperator \cremona {cremona}
\DeclareMathOperator \defect {defect}
\DeclareMathOperator \GW {GW}
\DeclareMathOperator \red {red}
\numberwithin{equation}{section}
\newtheorem{Lemma}[equation]{Lemma}
\newtheorem{Theorem}[equation]{Theorem}
\newtheorem*{thm*}{Theorem}
\newtheorem{Proposition}[equation]{Proposition}
\newtheorem{Corollary}[equation]{Corollary}
\newtheorem*{Lemma*}{Lemma}
\newtheorem*{Corollary*}{Corollary}
\theoremstyle{remark}
\newtheorem{Remark}[equation]{Remark}
\newtheorem*{Remark*}{Remark}
\newtheorem{Example}[equation]{Example}
\theoremstyle{definition}
\newtheorem{Definition}[equation]{Definition}
\newtheorem{noTitle}[equation]{}
\begin{document}

\title{Distinguishing symplectic blowups of the complex projective plane}

\subjclass[2010]{Primary 53D35, Secondary 53D45}

\author{Yael Karshon}
\address{Department of Mathematics, University of Toronto,
40 St.\ George Street, Toronto Ontario M5S 2E4, Canada}
\email{karshon@math.toronto.edu}

\author{Liat Kessler}
\address{Department of Mathematics, Physics, and Computer Science, 
University of Haifa at Oranim, Tivon 36006, Israel}
\email{lkessler@math.haifa.ac.il}

\thanks{The first author is partially supported by 
the Natural Science and Engineering Research Council of Canada.
The second author was partially supported by the Center for Absorption in
Science, Ministry of Immigrant Absorption, State of Israel,
and by the Israel Science Foundation, Grant 557/08.}

\begin{abstract}
A symplectic manifold that is obtained from $\CP^2$ by $k$ blowups
is encoded by $k+1$ parameters: the size of the initial $\CP^2$,
and the sizes of the blowups.  We determine which values of these
parameters yield symplectomorphic manifolds.
\end{abstract}

\maketitle

\tableofcontents

\section{Introduction}
\labell{sec:intro}

A symplectic manifold that is obtained from $\CP^2$ by $k$ blowups 
is encoded by $k+1$ parameters: the size $\lambda$ of the initial $\CP^2$,
and the sizes $\delta_1,\ldots,\delta_k$ of the blowups.
In this paper we answer the following question:
\begin{quotation}
Which values of the parameters yield symplectomorphic manifolds?
\end{quotation}

{
\begin{Example} \labell{different blowups} \
For each of the vectors $(\lambda;\delta_1,\delta_2,\delta_3)$ 
in the table below,
consider the manifold that is obtained from a $\CP^2$ of size $\lambda$
by blowups of sizes $\delta_1$, $\delta_2$, $\delta_3$.
These three manifolds have the same classical invariants:
the symplectic volume, which is proportional to
${\lambda}^2-\sum_{j=1}^{3}{\delta_{j}}^2$;
the pairing of the symplectic form with the first Chern class,
which is proportional to
$3 \lambda-\sum_{j=1}^{3}\delta_{j}$;
and the set of values that the symplectic form takes on $H_2(M)$,
which is proportional to $\Z \lambda + \Z \delta_1 + \Z \delta_2 + \Z \delta_3$.
The first two manifolds
are symplectomorphic, but the third is not symplectomorphic to the first two.
\smallskip
\begin{center}
{\renewcommand{\arraystretch}{1.2}
 \renewcommand{\tabcolsep}{2mm}
\begin{tabular}{|cccc|}
\multicolumn{1}{c}{$\lambda$} & $\delta_1$ & $\delta_2$ 
& \multicolumn{1}{c}{$\delta_3$} \\ 

\hline
 15 & 9 & 5 & 4 \\
\hline
 12 & 6 & 2 & 1 \\
\hline
 11 & 4 & 1 & 1 \\
\hline 
\end{tabular} }
\end{center}
\smallskip
\end{Example}
}

\bigskip

\begin{Definition} \labell{reduced form}
Let $k \geq 3$, and let $\lambda, \delta_1, \ldots, \delta_k$ be real numbers.
The vector $(\lambda ; \delta_1 , \ldots , \delta_k)$
is \textbf{reduced} if
\begin{equation} \labell{conditions-1}
 \delta_1 \geq \ldots \geq \delta_k \quad \text{ and } \quad
   \delta_1 + \delta_2 + \delta_3 \leq \lambda.
\end{equation}
\end{Definition}

Our convention is that the \textbf{size} of $\CP^2$
equipped with a symplectic form
is $1/2\pi$ times the symplectic area of a line $\CP^1 \subset \CP^2$
and the \textbf{size} of a blowup is $1/2\pi$ times the symplectic area
of the exceptional divisor.
We normalize the Fubini-Study form $\omega_\FS$ so that it has size one.
We denote by 
$$ (M_k,\omega_{\lambda;\delta_1,\ldots,\delta_k}) $$
a symplectic manifold that is obtained from
$(\CP^2,\lambda\omega_\FS)$ by blowups of sizes $\delta_1,\ldots,\delta_k$.
If such a manifold exists, 
then it is unique up to symplectomorphism. 
(This is due to McDuff \cite{isotopy}; we give a more precise statement
in Lemma~\ref{coh implies diff} below.)
With this notation, we now state our main theorem.
For the cases $0 \leq k \leq 2$, see Lemma~\ref{lem:small k}.

\begin{Theorem} \labell{theorem-1}
Let $k \geq 3$.
Given $(M_k,\omega_{\lambda';\delta_1',\ldots,\delta_k'})$,
there exists a unique reduced vector
$(\lambda;\delta_1,\ldots,\delta_k)$
such that
$$ (M_k,\omega_{\lambda';\delta_1',\ldots,\delta_k'})
 \cong (M_k,\omega_{\lambda;\delta_1,\ldots,\delta_k}). $$
\end{Theorem}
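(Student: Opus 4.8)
The plan is to translate the statement into a question about a reflection group acting on $H^2(M_k;\mathbb{R})$, prove existence by a reduction algorithm, and prove uniqueness by recognizing the reduced vectors as a strict fundamental domain. First I would set up the dictionary. Fix the standard basis $L,E_1,\dots,E_k$ of $H_2(M_k;\mathbb{Z})$, with $L^2=1$, $E_i^2=-1$ and all other products zero, and the canonical class $K=-3L+\sum_i E_i$; identify the vector $(\lambda;\delta_1,\dots,\delta_k)$ with the class $a=\lambda L-\sum_i\delta_i E_i$, so that $a\cdot L=\lambda$, $a\cdot E_i=\delta_i$, $a^2=\lambda^2-\sum_i\delta_i^2$ (the volume) and $-a\cdot K=3\lambda-\sum_i\delta_i$ (the pairing with $c_1$). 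Let $\Gamma$ be the group of lattice isometries of $H_2(M_k;\mathbb{Z})$ that fix $K$ and are realized by orientation-preserving diffeomorphisms of $M_k$. I will use that $\Gamma$ is generated by the permutations of $E_1,\dots,E_k$ together with the Cremona involution $c$, the reflection in $L-E_1-E_2-E_3$, which acts on vectors by $\lambda\mapsto 2\lambda-\delta_1-\delta_2-\delta_3$, $\delta_i\mapsto\lambda-\delta_j-\delta_\ell$ for $\{i,j,\ell\}=\{1,2,3\}$, and $\delta_i\mapsto\delta_i$ for $i>3$. Since a symplectomorphism between two of the forms $\omega_{\lambda;\delta}$ fixes the canonical class and hence induces an element of $\Gamma$, while conversely every element of $\Gamma$ is realized by a diffeomorphism, Lemma~\ref{coh implies diff} tells us that $(M_k,\omega_{\lambda;\delta})\cong(M_k,\omega_{\lambda';\delta'})$ precisely when $(\lambda;\delta)$ and $(\lambda';\delta')$ lie in one $\Gamma$-orbit. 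So the theorem reduces to: every $\Gamma$-orbit meets the set of reduced vectors in exactly one point.

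For existence I would run the reduction algorithm: reorder so that $\delta_1\ge\dots\ge\delta_k$, and, whenever $\delta_1+\delta_2+\delta_3>\lambda$, apply $c$. Each application of $c$ strictly decreases $\lambda$, by the positive amount $\delta_1+\delta_2+\delta_3-\lambda$, and reordering leaves $\lambda$ fixed. Every vector $b$ encountered is the area vector of a symplectic form on $M_k$, namely a pullback of $\omega_{\lambda';\delta'}$, so $b^2>0$ and $b$ pairs positively with every exceptional class; in particular all coordinates of $b$ remain positive, $\lambda>\delta_i+\delta_j$ for $i\ne j$, and $\lambda^2>b^2=(\lambda')^2-\sum_i(\delta_i')^2$, so $\lambda$ is bounded below by a fixed positive constant. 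Termination — as opposed to mere convergence — follows because the orbit $\Gamma\cdot a$ lies on the hyperboloid $\{x:x^2=a^2\}$, a copy of hyperbolic $k$-space on which $\Gamma$, as a discrete group of isometries, acts properly discontinuously; hence the orbit is discrete, and the part of it whose $\lambda$-coordinate lies in the bounded interval $[\sqrt{a^2},\lambda']$ is compact, so finite. A strictly decreasing sequence in a finite set is finite, so the algorithm halts, necessarily at a reduced vector.

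For uniqueness, observe that the defining inequalities $\delta_1\ge\dots\ge\delta_k$ and $\delta_1+\delta_2+\delta_3\le\lambda$ say exactly that $a$ pairs with each of the classes $E_1-E_2,\dots,E_{k-1}-E_k,\ L-E_1-E_2-E_3$ with a fixed sign; these classes, each of square $-2$, form a system of simple roots for the Coxeter group $\Gamma$, so the reduced vectors are precisely the points of a closed fundamental chamber of $\Gamma$ lying in the ($\Gamma$-invariant) symplectic cone. By the standard fact that a closed chamber is a \emph{strict} fundamental domain for a Coxeter group — any two of its points lying in a common orbit coincide — two reduced vectors in one $\Gamma$-orbit are equal. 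Together with existence, this is the theorem.

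The step I expect to be the real obstacle is the identification of $\Gamma$: that the automorphisms of $H^2(M_k)$ induced by orientation-preserving diffeomorphisms and fixing $K$ are \emph{exactly} those generated by permutations and the Cremona involution. The inclusion $\supseteq$ is elementary and is all that existence needs; but uniqueness needs $\subseteq$, i.e. that no diffeomorphism realizes an isometry outside this group, for otherwise the fundamental-domain argument would fail. I would take this from the structure theory of diffeomorphisms of rational surfaces, together with the results of McDuff behind Lemma~\ref{coh implies diff}; the auxiliary inputs above — positivity of the symplectic area on exceptional classes and discreteness of the orbit — are standard.
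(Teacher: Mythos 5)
Your existence argument is essentially the paper's: the same reduction algorithm, with termination guaranteed by discreteness of the orbit on the hyperboloid $\{x : x\cdot x = a\cdot a\}$ intersected with a bounded slab in the $\lambda$-coordinate. This is exactly the content of the paper's Lemmas~\ref{no accumulation again}, \ref{no accumulation 3} and \ref{finitesteps2}, so that half is fine.

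For uniqueness you take a genuinely different route, and it has a genuine gap, which you have correctly located but too optimistically waved away. Your argument needs the inclusion you call ``$\subseteq$'': every isometry of $(H_2(M_k),\cdot)$ induced by a symplectomorphism between blowup forms lies in the group $\Gamma$ generated by the permutations and the Cremona reflection. The sources you point to cannot supply this. By Wall's theorem \cite{wall}, \emph{every} isometry of the intersection form of $M_k$ ($k \geq 1$) is realized by a diffeomorphism, so the ``structure theory of diffeomorphisms of rational surfaces'' imposes no constraint beyond the fixing of $K$; and McDuff's results behind Lemma~\ref{coh implies diff} produce diffeomorphisms acting \emph{trivially} on homology --- they say nothing about which nontrivial homological actions are realized. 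What you actually need is either (i) the lattice-theoretic fact that the stabilizer of $K$ inside the group of isometries preserving the set of exceptional classes and the symplectic cone is exactly $\Gamma$ --- note that for $k \geq 10$ the lattice $K^\perp$ is Lorentzian, and this identification is precisely the Kac--Moody Weyl group input that Zhao--Gao--Qiu rely on (see the Remark at the end of Section~\ref{sec:uniqueness}), and even they treat only integral classes --- or (ii) a symplectic argument that replaces it. The paper chooses (ii): it classifies the minimal-area exceptional classes of a reduced form into four configuration types (Corollary~\ref{ABCD}), observes that the type, the minimal exceptional area $\delta_k$, and the pairing $3\lambda - \sum_i \delta_i$ with $c_1$ are symplectomorphism invariants, and then recovers the full vector by blowing down along the minimal exceptional spheres and inducting on $k$, with Lemma~\ref{ab} handling the $S^2 \times S^2$ case. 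Your Coxeter-chamber picture is attractive and, once the missing realization statement is actually proved, would extend the Zhao--Gao--Qiu approach to real cohomology classes; but as written, the step on which the whole uniqueness proof rests is asserted rather than established.
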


\bigskip

To compare different blowups, it is convenient to fix the underlying
manifold $M_k$,  as in~\cite{mystery}.
Once and for all, we fix a sequence 
$$p_1,p_2,p_3,\ldots$$ 
of distinct points
on the complex projective plane $\CP^2$, and we denote by $M_k$
the manifold that is obtained from $\CP^2$ by complex blowups
at $p_1,\ldots,p_k$.  
We have a decomposition
$$ H_2(M_k) = \Z L \oplus \Z E_1 \oplus \ldots \oplus \Z E_k $$
where $L$ is the image of the homology class of a line $\CP^1$
in $\CP^2$ under the inclusion map $H_2(\CP^2) \to H_2(M_k)$
and where $E_1,\ldots,E_k$ are the homology classes of the exceptional
divisors.
A \textbf{blowup form}
on $M_k$ is a symplectic form for which 
there exist pairwise disjoint 
embedded symplectic spheres in the classes $L,E_1,\ldots,E_k$.
(The terminology ``blowup form" was suggested to us by Dusa McDuff.)

The following two lemmas follow from work of Gromov and McDuff. 
Lemma \ref{deformation class}  follows from results 
of Gromov \cite[2.4.A', 2.4.A1']{gromovcurves}, 
McDuff \cite{rational-ruled} and McDuff-Salamon \cite[Proposition~7.21]{intro};
the deduction of Lemma \ref{coh implies diff} from Lemma \ref{deformation class}
is by a result of McDuff \cite{isotopy} using the ``inflation'' technique. 
For details, see \cite{algorithm}. 
For Lemmas~\ref{deformation class} and~\ref{coh implies diff}
in the context of uniqueness questions for symplectic structures,
see \cite[Examples 3.8, 3.9, 3.10]{salamon}.
When $k=0$, Lemma~\ref{coh implies diff}
is Gromov's result~\cite[2.4 $B_2'$ and 2.4 $B_3'$]{gromovcurves},

\begin{Lemma} \labell{deformation class}
The set of blowup forms on $M_k$ is an equivalence class
under the following equivalence relation: 
symplectic forms $\omega$ and $\omega'$ on $M_k$
are equivalent iff there exists a diffeomorphism $f \colon M_k \to M_k$
that acts trivially on the homology
and such that $f^*\omega$ and $\omega'$
are homotopic through symplectic forms.
\end{Lemma}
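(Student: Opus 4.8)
The plan is to prove the statement in two pieces: (i) that any two blowup forms on $M_k$ are equivalent, and (ii) that the set of blowup forms is saturated, i.e.\ a symplectic form equivalent to a blowup form is again a blowup form. I would deduce (ii) from the structure theory. If $\omega$ is a blowup form, carrying pairwise disjoint embedded symplectic spheres $C_0,\dots,C_k$ in the classes $L,E_1,\dots,E_k$, and $f$ is a diffeomorphism acting trivially on homology, then $f^{-1}(C_0),\dots,f^{-1}(C_k)$ are pairwise disjoint, embedded, $f^*\omega$-symplectic, and again represent $L,E_1,\dots,E_k$, so $f^*\omega$ is a blowup form; since any form equivalent to $\omega$ is homotopic through symplectic forms to such an $f^*\omega$, it suffices to show that every symplectic form lying in the deformation class of a blowup form is itself a blowup form. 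This is where pseudoholomorphic curves are needed. By the results of Gromov \cite[2.4.A', 2.4.A1']{gromovcurves}, McDuff \cite{rational-ruled} and McDuff--Salamon \cite[Proposition~7.21]{intro} --- the $E_i$ being exceptional classes of positive $\omega$-area (which is forced, since $c_1$ is a deformation invariant) and $L$ having square $+1$ --- such a form admits, for a generic compatible almost complex structure, embedded holomorphic, hence symplectic, spheres in each of the classes $L,E_1,\dots,E_k$; these are automatically pairwise disjoint by positivity of intersections, since $L\cdot E_i=0$ and $E_i\cdot E_j=0$. This proves (ii).

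For (i) I would reduce an arbitrary blowup form to a standard model by symplectically blowing down. Given $\omega$ and its spheres $C_0\in L$, $C_i\in E_i$, I would pick an $\omega$-compatible $J$ making all the $C_i$ holomorphic and blow down the exceptional spheres $C_1,\dots,C_k$ one at a time (McDuff \cite{rational-ruled}). The result is a symplectic $4$-manifold with $b_2=1$ that still contains the embedded symplectic $(+1)$-sphere image of $C_0$, which by the theorems of Gromov and McDuff is $(\CP^2,c\,\omega_\FS)$ for some $c>0$, with that sphere carried to a projective line. Undoing the blow-down realizes $(M_k,\omega)$ as a symplectic blowup of $(\CP^2,c\,\omega_\FS)$ at $k$ disjoint balls, placed off the line, of sizes $\delta_i=\tfrac{1}{2\pi}\int_{E_i}\omega$; so $\omega$ is symplectomorphic to a standard (K\"ahler) blowup form $\omega_{\std}$ with parameters $(c;\delta_1,\dots,\delta_k)$. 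Being built by blowing down and then re-blowing up, this symplectomorphism carries $C_0$ to the proper transform of the line and $C_i$ to the exceptional divisor of the $i$-th re-blowup, so it sends $L\mapsto L$ and $E_i\mapsto E_i$ and thus acts trivially on homology --- the residual freedom in the construction (the identification with $\CP^2$ and the positions of the balls) being absorbed into an isotopy, by the connectedness of the relevant spaces of symplectic ball embeddings; see \cite{algorithm}. Hence $\omega$ is equivalent to $\omega_{\std}$.

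It remains to connect the standard models. The parameter vectors $(c;\delta_1,\dots,\delta_k)$ that admit a standard blowup form form a path-connected set --- one may, for instance, shrink all the $\delta_i$ toward small positive values and then rescale $c$, keeping the balls disjoint throughout --- and the blowup construction depends continuously on the parameters, so any two standard blowup forms on $M_k$ are joined by a homotopy through (blowup) symplectic forms. Combining this with the reduction of the previous paragraph: for blowup forms $\omega_0$ and $\omega_1$ one gets homologically trivial diffeomorphisms $f_0,f_1$ such that $f_0^*\omega_0$ and $f_1^*\omega_1$ are standard blowup forms, hence homotopic through symplectic forms; applying $(f_1^{-1})^*$ to such a homotopy shows $(f_0\circ f_1^{-1})^*\omega_0$ is homotopic to $\omega_1$ through symplectic forms, with $f_0\circ f_1^{-1}$ homologically trivial. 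So $\omega_0$ and $\omega_1$ are equivalent, completing the proof.

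The step I expect to be the main obstacle is the pseudoholomorphic-curve input: that the classes $L,E_1,\dots,E_k$ carry embedded symplectic spheres that can be made simultaneously disjoint for every symplectic form in the deformation class of a blowup form (used in (ii)), and that exceptional spheres can be symplectically blown down (used in the reduction for (i)). All of this rests on the cited theorems of Gromov, McDuff and McDuff--Salamon; granting them, what remains is the bookkeeping with the equivalence relation sketched above, carried out carefully in \cite{algorithm}.
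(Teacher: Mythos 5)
The paper offers no proof of this lemma: it attributes the statement to Gromov \cite[2.4.A$'$, 2.4.A1$'$]{gromovcurves}, McDuff \cite{rational-ruled} and McDuff--Salamon \cite[Proposition 7.21]{intro} and defers all details to \cite{algorithm}, and your two-part outline --- saturation of the deformation class via embedded pseudoholomorphic spheres in $L,E_1,\dots,E_k$ (made disjoint by positivity of intersections), plus reduction of an arbitrary blowup form to a standard model by blowing down and connecting the standard models through a path of blowup forms --- is exactly the argument those citations are meant to support, at the same level of rigor as the paper itself. The one remark worth making is that where you absorb the ``residual freedom'' by invoking connectedness of spaces of symplectic ball embeddings you are using more than this lemma requires: since the equivalence relation only asks for a homotopy through symplectic forms (not an isotopy), the shrink-and-move deformation you already use to connect standard models handles that step as well, which keeps the deeper isotopy-uniqueness of ball embeddings (needed only for Lemma~\ref{coh implies diff}, via inflation) out of the proof and avoids any appearance of circularity.
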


\begin{Lemma} \labell{coh implies diff}
Any two cohomologous blowup forms on $M_k$
are diffeomorphic through a diffeomorphism that acts trivially 
on the homology.
\end{Lemma}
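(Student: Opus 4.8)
The plan is to derive Lemma~\ref{coh implies diff} from Lemma~\ref{deformation class}, Moser's stability theorem, and the ``inflation'' technique. I would begin with a purely formal reduction. Let $\omega$ and $\omega'$ be cohomologous blowup forms on $M_k$. Applying Lemma~\ref{deformation class} to the pair $\omega',\omega$ produces a diffeomorphism $f \colon M_k \to M_k$ acting trivially on $H_*(M_k)$ such that $f^*\omega'$ and $\omega$ are homotopic through symplectic forms. Because $f$ also acts trivially on $H^2(M_k;\R)$, the form $f^*\omega'$ is cohomologous to $\omega'$, hence to $\omega$. Thus $f^*\omega'$ and $\omega$ are \emph{both} cohomologous and connected by a path of symplectic forms, and it suffices to produce a diffeomorphism $\varphi$ isotopic to the identity with $\varphi^*(f^*\omega') = \omega$: then $f\circ\varphi$ carries $\omega'$ to $\omega$ and acts trivially on homology. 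So the lemma reduces to the statement that two cohomologous symplectic forms on $M_k$ lying in the same path component of the space of symplectic forms are related by a diffeomorphism isotopic to the identity.

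Next I would recall why this does not follow immediately from Moser. If $\{\omega_t\}_{t\in[0,1]}$ were a path of symplectic forms with $[\omega_t]$ \emph{independent of $t$}, Moser's argument would integrate the time-dependent vector field $X_t$ defined by $\iota_{X_t}\omega_t = -\sigma_t$ (where $\dot\omega_t = d\sigma_t$) to an isotopy $\psi_t$ with $\psi_t^*\omega_t = \omega_0$, and $\psi_1$ would do the job. The obstruction is that the path joining the cohomologous forms $\omega_0 = \omega$ and $\omega_1 = f^*\omega'$ supplied by Lemma~\ref{deformation class} generically wanders through other cohomology classes. The core of the argument, following McDuff~\cite{isotopy}, is therefore to replace this path, rel endpoints, by one through cohomologous forms.

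This is where inflation enters. Given an $\omega_t$-compatible almost complex structure $J_t$ and an embedded $J_t$-holomorphic curve $Z$ with $[Z]\cdot[Z] \geq 0$, one builds symplectic forms supported near $Z$ whose cohomology class is pushed in the direction $\PD[Z]$, keeping $\omega_t$ fixed away from $Z$. The same $J$-holomorphic curve theory that proves Lemma~\ref{deformation class} — Gromov compactness, positivity of intersections and automatic transversality in dimension four, and the explicit inventory of embedded curves in the classes $L$, $L-E_i-E_j$, $E_i$ and their combinations on blowups of $\CP^2$ — provides, depending smoothly on $t$, enough such inflating curves to move $[\omega_t]$ to nearby classes in the symplectic cone. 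Since for $M_k$ this cone is an explicitly described convex region (cut out by positivity of the pairing of $\omega$ with $L$, with the exceptional classes, and with the classes $L-E_i-E_j$, etc.), one can first normalize $[\omega_t]$ so that it varies with $t$ only through a contractible loop in the cone, then shrink that loop to the constant class $[\omega]$ while remaining in the cone and realizing each stage by symplectic forms deformation-equivalent to the original ones, and finally apply Moser to the resulting path of cohomologous forms.

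The step I expect to be the main obstacle is exactly this inflation-and-normalization step. One needs not a single inflating curve but continuously (in fact smoothly) varying families of them spanning enough directions in $H^2(M_k;\R)$; one must control how the relevant $J_t$-holomorphic representatives degenerate (bubbling into the exceptional classes) and arrange the inflation to accommodate these degenerations; and one must use a sufficiently precise description of the symplectic cone to carry out the contraction of the cohomology loop inside it. All of this is the content of McDuff~\cite{isotopy}, with the adaptation to the blowup setting at hand spelled out in~\cite{algorithm}; the formal reduction of the first paragraph and the concluding Moser argument are routine by comparison.
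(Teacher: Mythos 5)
Your proposal is correct and follows the same route the paper takes: it deduces the lemma from Lemma~\ref{deformation class} together with McDuff's deformation-to-isotopy theorem \cite{isotopy}, whose proof is the inflation argument you sketch, followed by Moser's stability. The paper itself only cites \cite{isotopy} and defers details to \cite{algorithm}, so your outline (including the formal reduction via $f$ and the composition $f\circ\varphi$) matches the intended argument.
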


\begin{Definition} \labell{encode}
Fix a non-negative integer $k$.
Let $\left< \cdot , \cdot \right>$ denote the pairing
between cohomology and homology on $M_k$.
A vector $(\v)$ in $\R^{1+k}$
\textbf{encodes} a cohomology class $\Omega \in H^2(M_k;\R)$
if $\frac{1}{2\pi} \left< \Omega , L \right> = \lambda$
and $\frac{1}{2\pi} \left< \Omega , E_j \right> = \delta_j$
for $j= 1, \ldots, k$.
\end{Definition}

Thus, $\omega_{\v}$ can be taken to be a blowup form on $M_k$
whose cohomology class is encoded by the vector $(\v)$;
by Lemma~\ref{coh implies diff} it is unique
up to a diffeomorphism that acts trivially on the homology.

\begin{Remark} \labell{poslemma}
Suppose that the vector $(\v)$ encodes the cohomology class
of a blowup form $\omega$ on $M_k$.
Then the numbers $\lambda, \delta_1, \ldots, \delta_k$ are positive
(from the definition of ``blowup form"),
they satisfy $\delta_i + \delta_j < \lambda$ for all $i \neq j$
(``the Gromov inequality", see \cite[0.3.B]{gromovcurves}),
and they satisfy 
$\lambda^2 - \delta_1^2 - \ldots - \delta_k^2 > 0$ 
(``the volume inequality").
In particular, if $\delta_1 = \ldots = \delta_k = \lambda/3$, then $k \leq 8$.
\end{Remark}

Theorem \ref{theorem-1}, combined with work of Li-Li \cite{Li-Li02},
further leads to the following characterization of blowup forms,
which we prove in Section~\ref{sec:characterization}.

\begin{Theorem} \labell{theorem-2}
Let $k \geq 3$.  
Let $(\v)$ be a vector with positive entries that is reduced
and that satisfies the volume inequality 
$\lambda^2 - {\delta_1}^2 - \ldots - {\delta_k}^2 > 0$.
Then there exists a blowup form $\omega_{\v}$ whose cohomology
class is encoded by this vector.
This defines
a bijection between the set of vectors 
with positive entries that are reduced and satisfy the volume inequality 
and the set of blowup forms modulo diffeomorphism.
\end{Theorem}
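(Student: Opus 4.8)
The strategy is to derive Theorem~\ref{theorem-2} from Theorem~\ref{theorem-1} by identifying the action on vectors that corresponds to the relation ``cohomologous'', and then feeding in the Li-Li classification of that action's orbits. First I would set up the dictionary: by Lemma~\ref{coh implies diff} a blowup form is determined up to a homology-trivial diffeomorphism by its cohomology class, and a cohomology class is recorded by the vector $(\v)$ that encodes it (Definition~\ref{encode}); by Lemma~\ref{deformation class} the set of blowup forms is a single deformation--plus--homology-trivial-diffeomorphism class. Hence ``blowup forms modulo diffeomorphism'' is the same as ``cohomology classes of blowup forms modulo the action of the image of $\mathrm{Diff}(M_k)$ on $H^2(M_k;\R)$'', which is the action of those automorphisms of the lattice $H_2(M_k)$ realized by diffeomorphisms. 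The content to be imported from Li-Li~\cite{Li-Li02} is exactly the description of this group of lattice automorphisms (it is generated by permutations of the $E_j$ together with the Cremona reflection in $(1;1,1,1)$ on the first three exceptional classes), and the statement that every such automorphism preserves the intersection form and the canonical class, hence acts on encoding vectors through the corresponding transformations of $(\v)$.

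Next I would check the two halves of the asserted bijection. For surjectivity and well-definedness: given a vector with positive entries that is reduced and satisfies the volume inequality, I need a blowup form whose class it encodes. This is the existence clause; I would obtain it by combining Remark~\ref{poslemma}'s necessary conditions with the converse direction — reducedness plus positivity plus the volume inequality should be shown to force a genuine symplectic cohomology class supporting disjoint embedded symplectic spheres in $L,E_1,\dots,E_k$ — invoking McDuff's results and the Li-Li analysis that a class in the ``symplectic cone'' which is reduced is realized. For injectivity: if two reduced vectors with positive entries both encode classes of blowup forms and those forms are diffeomorphic, then by the dictionary above the vectors lie in the same orbit of the Cremona-and-permutation group; Theorem~\ref{theorem-1} says each orbit contains a \emph{unique} reduced representative, so the two vectors coincide. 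Conversely distinct reduced vectors give non-diffeomorphic forms, again by Theorem~\ref{theorem-1} (they lie in different orbits, and diffeomorphic blowup forms would be cohomologous after pullback, putting them in the same orbit).

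The main obstacle I anticipate is the existence half: translating ``reduced $+$ positive $+$ volume inequality'' into an actual blowup form, i.e. showing these combinatorial conditions are not merely necessary (Remark~\ref{poslemma}) but sufficient. One has to know that such a vector encodes a class that is \emph{in the symplectic cone} of $M_k$ and moreover is represented by a form admitting the required configuration of embedded symplectic spheres; this is where the real input from \cite{Li-Li02} (characterizing the symplectic cone / reduced classes) and from McDuff's uniqueness-and-inflation machinery is needed, rather than in the bijectivity bookkeeping, which is essentially a restatement of Theorem~\ref{theorem-1}. A secondary technical point is to make sure the group that Li-Li produce is exactly the group whose fundamental domain (the reduced vectors) is described in Theorem~\ref{theorem-1}, so that the orbit count lines up; this amounts to matching their generators with the conditions in~\eqref{conditions-1}.
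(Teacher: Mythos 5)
Your outline is essentially the paper's: existence comes from Li--Li's description of the symplectic cone, and the bijection is bookkeeping on top of Theorem~\ref{theorem-1}. Two points of comparison. First, for existence the paper isolates the one nontrivial step explicitly: Li--Li characterize the cohomology classes of blowup forms as those vectors in the forward positive cone that pair \emph{positively with every exceptional class}, and Lemma~\ref{deltak is minimal again} (equivalently, McDuff's Corollary~\ref{mcduffcor}) shows that a reduced vector with positive entries pairs with every exceptional class by at least $\delta_k>0$. That inequality is where ``reduced'' actually enters; your plan leaves it folded into a general appeal to \cite{Li-Li02}. This is defensible, since Li--Li prove an equivalent statement for their variant of ``reduced'', but it is the step you would have to write out, and the mismatch between the two notions of ``reduced'' (Remark~\ref{variation of reduced}) would need a sentence. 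Second, your injectivity argument detours through the claim that diffeomorphic blowup forms have encoding vectors in the same orbit of the Cremona-and-permutation group, i.e.\ through a determination of the homological action of $\mathrm{Diff}(M_k)$. Theorem~\ref{theorem-1} does \emph{not} say that each orbit of that group contains a unique reduced representative; it says that each symplectomorphism class of blowup forms determines a unique reduced vector, and this is proved in Section~\ref{sec:uniqueness} by blowing down minimal exceptional spheres, precisely so as to avoid identifying the image of $\mathrm{Diff}(M_k)$ in $\mathrm{Aut}(H_2(M_k))$ (which is a separate, nontrivial theorem and not what \cite{Li-Li02} is quoted for here). You should drop the orbit language and cite Theorem~\ref{uniqueness} directly: two reduced vectors encoding symplectomorphic blowup forms coincide. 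With that substitution your argument closes and agrees with the paper's.
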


For completeness, we also describe now the cases $0 \leq k \leq 2$,
whose proofs we give in Section~\ref{sec:uniqueness}:

\begin{Lemma} \labell{lem:small k} \ 

\medskip\noindent\textbf{The case $\mathbf{k =2}$: \ }
A vector $(\lambda;\delta_1,\delta_2)$
encodes the cohomology class of a blowup form 
exactly if its entries are positive
and satisfy the Gromov inequality $\delta_1+\delta_2 < \lambda$.
Blowup forms that correspond to vectors
$(\lambda;\delta_1,\delta_2)$ and $(\lambda';\delta'_1,\delta'_2)$
are diffeomorphic
if and only if $\lambda' = \lambda$
and $\{ \delta'_1 , \delta'_2 \} = \{ \delta_1 , \delta_2 \}$.

\medskip\noindent\textbf{The case $\mathbf{k =1}$: \ }
A vector $(\lambda;\delta_1)$ 
encodes the cohomology class of a blowup form
exactly if its entries are positive and satisfy $\delta_1 < \lambda$.
Two blowup forms are diffeomorphic if and only if their cohomology classes
are represented by the same vector.

\medskip\noindent\textbf{The case $\mathbf{k =0}$: \ }
A vector $(\lambda)$ encodes the cohomology class of a blowup form
if and only if $\lambda > 0$.
Two blowup forms are diffeomorphic
if and only if they have the same size $\lambda$.
\end{Lemma}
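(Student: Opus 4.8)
The plan is to handle the three cases $k=0,1,2$ separately, in each case first determining which vectors arise as cohomology classes of blowup forms and then classifying the forms up to diffeomorphism. In all three cases the ``if'' direction — constructing a blowup form with a prescribed cohomology class — will be done by exhibiting an explicit symplectic form: for $k=0$ one simply takes $\lambda\omega_\FS$ on $\CP^2$; for $k=1,2$ one performs symplectic blowups of the required sizes, which is possible precisely when each blowup embeds, i.e.\ when $\delta_j<\lambda$ for $k=1$ and, for $k=2$, when after the first blowup of size $\delta_1$ there is still room for a ball of size $\delta_2$, which by McDuff--Polterovich packing (or directly, since $M_2$ is a toric variety and one can draw the moment polytope) amounts exactly to the Gromov inequality $\delta_1+\delta_2<\lambda$ together with positivity. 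I would cite Remark~\ref{poslemma} for the ``only if'' direction of the existence statements (positivity and the Gromov inequality are necessary).

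For the classification up to diffeomorphism I would argue as follows. By Lemma~\ref{coh implies diff}, two blowup forms on $M_k$ are diffeomorphic through a homology-trivial diffeomorphism iff they are cohomologous, i.e.\ iff they are encoded by the same vector. So the only remaining freedom is that a diffeomorphism of $M_k$ may act nontrivially on $H_2(M_k)$, permuting or otherwise mixing the classes $L,E_1,\dots,E_k$, and hence identifying blowup forms encoded by different vectors. For $k=0$ the homology is $\Z L$ and any diffeomorphism of $\CP^2$ fixes $L$ (orientation considerations, or $\mathrm{Diff}(\CP^2)$ acting trivially on $H_2$), so the vector $(\lambda)$ is a complete invariant. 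For $k=1$, $M_1=\CP^2\#\overline{\CP^2}$; the classes $L$ and $E_1$ are distinguished intrinsically — $E_1$ has self-intersection $-1$ and is the unique class represented by an embedded sphere with that property and positive pairing against a blowup form, while $L$ has self-intersection $+1$ — so again no nontrivial permutation is allowed and the vector is a complete invariant. For $k=2$ the new feature is that $E_1$ and $E_2$ play symmetric roles: the diffeomorphism exchanging $p_1$ and $p_2$ swaps $E_1\leftrightarrow E_2$ and fixes $L$, giving the relation $\{\delta_1',\delta_2'\}=\{\delta_1,\delta_2\}$, and one must check there are no further identifications.

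The main obstacle is precisely this last point for $k=2$: ruling out that some more exotic element of the diffeomorphism group — one not fixing $L$ — could identify the forms. The automorphisms of $H_2(M_k)$ preserving the intersection form and the canonical class $K=-3L+E_1+\dots+E_k$ form a Coxeter/Cremona-type group (acting via the standard Cremona transformation $L\mapsto 2L-E_1-E_2-E_3$, etc.), but for $k\le 2$ there are not enough exceptional classes to perform a Cremona move, so this group is generated just by the permutations of the $E_j$. One must also confirm that every lattice automorphism fixing $K$ and preserving the ``positive cone'' is actually realized by a diffeomorphism (this is part of the cited McDuff/Li--Li picture, so for $k\le2$ I would invoke it rather than reprove it), and that a diffeomorphism realizing the $E_1\leftrightarrow E_2$ swap does indeed carry one blowup form to one cohomologous to the other. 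Assembling these gives: the vector, taken up to permuting $\delta_1,\dots,\delta_k$ (which for $k=2$ means up to swapping $\delta_1,\delta_2$, and for $k\le1$ is no condition at all), is a complete diffeomorphism invariant, and the size $\lambda=\tfrac1{2\pi}\langle[\omega],L\rangle$ is recovered intrinsically, completing the proof of Lemma~\ref{lem:small k}.
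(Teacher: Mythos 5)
Your overall architecture matches the paper's: existence via explicit/toric constructions plus Gromov's inequality for necessity, Lemma~\ref{coh implies diff} to handle cohomologous forms, and then an analysis of how a diffeomorphism matching two blowup forms can act on $H_2$. The divergence is in the $k=2$ endgame. The paper does not classify lattice automorphisms fixing $K$; it uses Demazure's list --- the exceptional classes of $M_2$ are exactly $\{E_1,E_2,L-E_1-E_2\}$ --- together with the facts that a symplectomorphism permutes the exceptional classes and preserves disjointness. Since $\{E_1,E_2\}$ is the unique disjoint pair, the induced map permutes $\{E_1,E_2\}$, giving $\{\delta_1,\delta_2\}=\{\delta'_1,\delta'_2\}$ directly, and $\lambda=\lambda'$ then follows from the invariance of $3\lambda-\delta_1-\delta_2=\frac{1}{2\pi}\langle [\omega], c_1\text{-dual}\rangle$ under symplectomorphism. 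Your route through the Coxeter group reaches the same conclusion but has a point to tighten: the observation that no Cremona reflection exists for $k\le 2$ only shows that the \emph{Weyl} group reduces to permutations of the $E_j$; you still need that the full group of lattice isometries fixing $K$ (and preserving the positive cone) is no larger, which requires a short computation in the rank-two negative definite lattice $K^\perp$ --- or is bypassed entirely by the paper's finite-list-of-exceptional-classes argument. Also, the ``realization'' direction you flag is only needed for the swap $E_1\leftrightarrow E_2$, which the paper realizes by a diffeomorphism permuting the blown-up points; the harder-sounding general realization statement is not required. Finally, a small slip for $k=0$: $\mathrm{Diff}(\CP^2)$ does \emph{not} act trivially on $H_2$ (complex conjugation acts by $-1$); the correct argument, which the paper uses, is that a blowup form is positive on exactly one generator of $H_2(\CP^2)$, so a diffeomorphism matching two such forms must preserve that generator.
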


\bigskip

In Section~\ref{sec:existence} we prove the ``existence" part 
of Theorem~\ref{theorem-1}; see Proposition~\ref{exists reduced}.
In doing this, we give an algorithm that, 
given a vector $v$ that encodes 
the cohomology class of a blowup form,
finds a corresponding reduced vector $v_{\red}$;
see paragraph~\ref{r:algorithm}
and the proof of Proposition~\ref{exists reduced} that follows it.
Moreover, we show that the map $v \mapsto v_{\red}$ 
is continuous; see Lemma~\ref{l:continuous}.

In Section~\ref{sec:minimal},
for every blowup form whose cohomology class is represented 
by a reduced vector,
we give the complete list of exceptional homology classes 
with minimal symplectic area.  
See Theorem \ref{thm:Emin} and Remark~\ref{min E for small k}.
The list always contains the smallest exceptional divisor $E_k$
and generically contains only it.
We give two proofs of this result,
one in Section~\ref{sec:minimal}, 
and one in Section~\ref{sec:mcduff} that uses a beautiful argument of McDuff.

In Section~\ref{sec:uniqueness} we prove the ``uniqueness" part 
of Theorem~\ref{theorem-1}; see Theorem~\ref{uniqueness}.
From this and the results of Section~\ref{sec:existence}
we obtain an algorithm that, given two blowup forms $\omega$ and $\omega'$,
determines whether or not they are diffeomorphic:
apply the algorithm of paragraph~\ref{r:algorithm}
to the vectors $v$ and $v'$ that encode
the cohomology classes $[\omega]$ and $[\omega']$;
the forms $\omega$ and $\omega'$ are diffeomorphic
if and only if $v_{\red} = v'_{\red}$.

In Section~\ref{sec:characterization},
we combine our results with those of Li--Li \cite{Li-Li02}
to obtain Theorem~\ref{theorem-2}.
From this, in turn, we obtain an algorithm that, for every cohomology class,
determines whether or not it contains a blowup form:
apply the algorithm of paragraph~\ref{r:algorithm} 
to the vector $v$ that encodes the cohomology class;
the cohomology class contains a blowup form if and only if 
the entries of $v_{\red}$ are positive.

\bigskip

In this paper, we rely on facts that are rather standard in the 
symplectic topology community
but whose precise statements in the form that we need
are not always explicit in the literature.
More detailed justifications of these statements
are spelled out in an accompanying manuscript \cite{algorithm},
which studies different toric actions on a fixed symplectic four-manifold.

\smallskip

Throughout this paper, unless we say otherwise,
homology is taken with integer coefficients
and cohomology is taken with real coefficients.

\bigskip\noindent\textbf{Acknowledgement.}
This paper branched off from a joint project with Martin Pinsonnault.
We are grateful to Martin for his collaboration.
We are also grateful for stimulating discussions 
with Paul Biran, Dusa McDuff, Dietmar Salamon, and Jake Solomon.  
In particular, 
our communication with McDuff has clarified Theorem~\ref{theorem-2}. 
The second author would also like to acknowledge support from Tamar Ziegler 
of the Technion, Israel Institute of Technology.

\section{Existence of reduced form}
\labell{sec:existence}

In this section 
we prove the ``existence" part of Theorem \ref{theorem-1}:

\begin{Proposition}[Existence of reduced form] \labell{exists reduced}
Let $k \geq 3$.  
Let $\omega$ be a blowup form on $M_k$.
Then there exists a blowup form on $M_k$ that is diffeomorphic to $\omega$
and whose cohomology class is encoded by a reduced vector.
\end{Proposition}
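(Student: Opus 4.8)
The plan is to exhibit an explicit sequence of homology-trivial diffeomorphisms of $M_k$ — more precisely, diffeomorphisms realizing a controlled set of automorphisms of $H_2(M_k)$ — whose effect on the encoding vector is understood, and to run a descent argument that terminates at a reduced vector. Concretely, $\omega$ is a blowup form whose cohomology class is encoded by some vector $v = (\v)$, and by Remark~\ref{poslemma} its entries are positive, satisfy the Gromov inequalities $\delta_i + \delta_j < \lambda$, and satisfy the volume inequality. I would first observe that permuting the points $p_1,\ldots,p_k$ is realized by a diffeomorphism of $M_k$ permuting the $E_j$; this acts on homology, but composing with such permutations lets us sort the $\delta_j$ into nonincreasing order at any stage. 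The only nontrivial move needed is the one that decreases $\delta_1 + \delta_2 + \delta_3$ when it exceeds $\lambda$.

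The key ingredient is the Cremona-type move: the reflection of $H_2(M_k)$ in the class $L - E_1 - E_2 - E_3$, which sends
$$
L \mapsto 2L - E_1 - E_2 - E_3, \quad E_i \mapsto L - E_j - E_\ell \ (\{i,j,\ell\}=\{1,2,3\}), \quad E_m \mapsto E_m \ (m \geq 4).
$$
This is an automorphism of $H_2(M_k)$ preserving the intersection form and the canonical class $-3L + \sum E_j$. I would invoke Lemma~\ref{deformation class}: the set of blowup forms is a single equivalence class under pullback-by-homology-acting-diffeomorphism-followed-by-symplectic-homotopy. The point is that applying a homology automorphism $\Phi$ in the relevant subgroup to a blowup form $\omega$ produces a form whose class is $\Phi^*[\omega]$, and one must argue this new class is again represented by a blowup form so that Lemma~\ref{coh implies diff} applies; for the Cremona move this follows because $\Phi$ preserves the canonical class and the homology classes $L, E_1, \ldots, E_k$ are carried to a new basis in which the form is again "standard" — this is exactly the content packaged into Lemmas~\ref{deformation class}–\ref{coh implies diff} (details in \cite{algorithm}). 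Under the Cremona move, the new encoding vector has first three entries $\lambda - \delta_2 - \delta_3$, $\lambda - \delta_1 - \delta_3$, $\lambda - \delta_1 - \delta_2$ and new $\lambda' = 2\lambda - \delta_1 - \delta_2 - \delta_3$, with $\delta_m$ unchanged for $m \geq 4$.

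The descent then works as follows. Given $v$, first permute so that $\delta_1 \geq \cdots \geq \delta_k$. If $\delta_1 + \delta_2 + \delta_3 \leq \lambda$ we are done. Otherwise apply the Cremona move to the three largest entries, then re-sort. I would show this strictly decreases a suitable nonnegative integer-or-discrete quantity — the natural choice being $\lambda$ itself when the $\delta_j$ lie in a lattice, but since the entries are arbitrary reals the cleaner monovariant is $N(v) := 2\lambda - \delta_1 - \delta_2 - \delta_3 - \delta_4$ or more robustly the sum $\lambda + (\lambda - \delta_1 - \delta_2 - \delta_3)$; one checks that each non-terminal Cremona-plus-sort step strictly decreases $\lambda$, while $\lambda$ stays positive (it dominates each $\delta_j > 0$). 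Since $\lambda$ cannot decrease indefinitely while remaining bounded below — here one uses that the values of the form on $H_2$ lie in a fixed finitely generated subgroup $\Z\lambda + \sum \Z\delta_j$ of $\R$, which the Cremona move preserves, so $\lambda$ takes values in a discrete set — the process terminates, necessarily at a reduced vector. The resulting blowup form is diffeomorphic to $\omega$ by construction, and by Lemma~\ref{coh implies diff} we may take it to be the canonical representative $\omega_{\v}$.

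The main obstacle I anticipate is the termination argument: over $\R$ there is no a priori well-ordering, so one must extract from the algebraic structure the right monovariant. The clean way is to note that all the encoding vectors produced share the same associated subgroup $G = \Z\lambda_0 + \Z\delta_1^0 + \cdots + \Z\delta_k^0 \subset \R$ (the set of periods, which is a diffeomorphism invariant and is preserved by the lattice automorphisms we use), that each new $\lambda$ lies in $G$, that $\lambda$ strictly decreases at each non-terminal step by an amount $\delta_1 + \delta_2 + \delta_3 - \lambda$ which is bounded below away from $0$ along the process — this last boundedness is the delicate point and is where I would spend the real work, showing it follows from $\lambda$ decreasing inside the discrete group $G \cap (0,\lambda_0]$. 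A secondary, more bookkeeping-heavy obstacle is being careful that the composite diffeomorphism genuinely acts as the claimed homology automorphism and that at every intermediate stage the class is still a blowup class so that the lemmas of the previous section legitimately apply; I would handle this by induction on the number of moves, citing Lemma~\ref{deformation class} at each step.
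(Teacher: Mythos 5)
Your overall strategy coincides with the paper's: sort the $\delta_j$, apply the Cremona reflection in $L-E_1-E_2-E_3$ when $\delta_1+\delta_2+\delta_3>\lambda$, realize both moves by diffeomorphisms (Wall's theorem for the Cremona move, point-permuting diffeomorphisms for the sorting), check that these diffeomorphisms carry blowup forms to blowup forms via the nonvanishing Gromov--Witten invariants of $L,E_1,\ldots,E_k$, and conclude with Lemma~\ref{coh implies diff}. That part of your write-up matches the paper's Lemma~\ref{cremona is diffeo 2} and the algorithm of paragraph~\ref{r:algorithm}.

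The genuine gap is in the termination argument. You propose that the process terminates because $\lambda$ strictly decreases while staying in the finitely generated subgroup $G=\Z\lambda_0+\Z\delta_1^0+\cdots+\Z\delta_k^0\subset\R$, ``so $\lambda$ takes values in a discrete set.'' A finitely generated subgroup of $(\R,+)$ of rank at least two is dense, not discrete ($\Z+\Z\sqrt{2}$ is the standard example), so $G\cap(0,\lambda_0]$ gives no lower bound on the decrements $\delta_1+\delta_2+\delta_3-\lambda$, and a priori infinitely many steps with decrements tending to $0$ are possible. Your argument does work when the entries are commensurable (e.g.\ integral), and that is exactly the content of Remark~\ref{rk:rational}, going back to Li and Li--Li. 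For arbitrary real entries the paper replaces your monovariant by a properness statement (Lemma~\ref{no accumulation again}): every entry $\delta_i^{(n)}$ produced along the iteration equals $\frac{1}{2\pi}\Omega(E)$ for some lattice class $E$ with $E\cdot E=-1$, because the Cremona transformation and the permutations preserve the intersection form; the volume inequality makes $\Omega$ timelike for the Lorentzian form, and after a Gram--Schmidt change of basis one sees that $E\mapsto\Omega(E)$ is proper on the hyperboloid $\{E\cdot E=-1\}$. Hence only finitely many values occur in the bounded interval $(-\lambda,\lambda)$ containing all the $\delta_i^{(n)}$, and the monotonicity properties of the standard Cremona move then force the sequence to stabilize (Lemma~\ref{finitesteps2}). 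This quadratic, Lorentzian discreteness --- not the group-theoretic one --- is the ingredient missing from your proof.
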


Moreover, in paragraph~\ref{r:algorithm} 
we give an algorithm that associates to every vector $v$ 
that encodes the cohomology class of a blowup form $\omega$
a reduced vector $v_{\red}$ that encodes the cohomology class
of a blowup form that is diffeomorphic to $\omega$,
and in Lemma~\ref{l:continuous} 
we show that the map $v \mapsto v_{\red}$ is continuous.

\bigskip

We begin with some algebraic preliminaries.

We'll consider the $\Z$-module (``the lattice")
with basis elements $L, E_1, \ldots, E_k$:
$$ \Z L \oplus \Z E_1 \oplus \ldots \oplus \Z E_k \quad 
   \left( \cong \Z^{1+k}    \right),$$
with the bilinear form (``the intersection form") that is given by
$$ L \cdot L = 1, \quad E_i \cdot E_i = -1, \quad
   E_i \cdot E_j = 0 \text{ if } i \neq j, \quad
   L \cdot E_j = 0.$$

\begin{noTitle} \labell{identify}
We identify the element $\Omega = (\lambda; \delta_1,\ldots,\delta_k)$
of $\R^{1+k}$ with the homomorphism from the lattice 
$\Z L \oplus \Z E_1 \oplus \ldots \oplus \Z E_k$ to $\R$
that satisfies $\lambda = \frac{1}{2\pi} \Omega(L)$
and $\delta_j = \frac{1}{2\pi} \Omega(E_j)$ for all $1 \leq j \leq k$.
(Of course, we think of each lattice element 
as a homology class in $H_2(M_k)$ and of each vector in $\R^{1+k}$ 
as the cohomology class in $H^2(M_k;\R)$ that it encodes.)
\end{noTitle}

We will use 
the following fact, which we learned from Martin Pinsonnault.
This fact was also a crucial ingredient in our previous work \cite{kkp}.

\begin{Lemma} \labell{no accumulation again}
Let $\Omega := (\v)$ be a vector in $\R^{1+k}$ 
that satisfies the volume inequality
$ \lambda^2 - \delta_1^2 - \ldots - \delta_k^2 > 0$.
Let
$$ \calH_{-1} = \{ E \in \Z L \oplus \Z E_1 \oplus \ldots \oplus \Z E_k
 \ | \ E \cdot E = -1 \}.$$
Then the map $E \mapsto \Omega(E)$ from $\calH_{-1}$ to $\R$ is proper.
That is, for each bounded interval $I \subset \R$,
the set $\{ E \in \calH_{-1} \ | \ \Omega(E) \in I \}$ 
is compact (hence finite).
\end{Lemma}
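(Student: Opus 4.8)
The plan is to exploit the Hodge-index-type structure of the intersection form, which has signature $(1,k)$, together with the volume inequality, which says precisely that $\Omega$ (viewed as a vector in $\R^{1+k}$) lies in the positive cone of this form. Concretely, write $\|E\|^2 := (E\cdot E)$ and define the bilinear pairing directly on $\R^{1+k}$ extending the intersection form. The vector $\Omega=(\lambda;\delta_1,\dots,\delta_k)$ has $\Omega\cdot\Omega = \lambda^2 - \delta_1^2-\dots-\delta_k^2 > 0$ by hypothesis, so $\Omega$ spans a positive line, and its orthogonal complement $\Omega^\perp$ is negative definite. For any $E\in\calH_{-1}$, decompose $E = t\,\Omega + E'$ with $E'\in\Omega^\perp$ and $t = (E\cdot\Omega)/(\Omega\cdot\Omega) = 2\pi\,\Omega(E)/(\Omega\cdot\Omega)$. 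Then
\begin{equation*}
 -1 = E\cdot E = t^2 (\Omega\cdot\Omega) + E'\cdot E',
\end{equation*}
so $E'\cdot E' = -1 - t^2(\Omega\cdot\Omega) < 0$, consistent with negative-definiteness, and moreover $|E'\cdot E'| = 1 + t^2(\Omega\cdot\Omega)$.

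Now suppose $\Omega(E)\in I$ for a bounded interval $I$; then $|t|$ is bounded by a constant depending only on $I$ and $\Omega$, hence $|E'\cdot E'|$ is bounded, hence (using that the negative-definite form restricted to the lattice $\Omega^\perp \cap (\Z L\oplus\bigoplus \Z E_i)$, or more simply using that $E = t\Omega + E'$ with both pieces of bounded norm in the ambient $\R^{1+k}$) the Euclidean norm of $E$ in $\R^{1+k}$ is bounded. Since $\calH_{-1}$ is a subset of the integer lattice $\Z L\oplus\Z E_1\oplus\dots\oplus\Z E_k$, a Euclidean ball around the origin contains only finitely many of its points. Therefore $\{E\in\calH_{-1}\mid \Omega(E)\in I\}$ is finite, in particular compact, which is exactly the assertion.

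The one point requiring a little care — and the main (mild) obstacle — is passing from ``$|t|$ bounded and $|E'\cdot E'|$ bounded'' to ``$\|E\|_{\mathrm{eucl}}$ bounded''. For this I would fix once and for all a positive constant $c>0$ such that $-v\cdot v \geq c\,\|v\|_{\mathrm{eucl}}^2$ for all $v$ in the negative-definite subspace $\Omega^\perp$ (such $c$ exists by compactness of the unit sphere in $\Omega^\perp$); then $\|E'\|_{\mathrm{eucl}}^2 \leq c^{-1}|E'\cdot E'| = c^{-1}(1+t^2(\Omega\cdot\Omega))$, and $\|E\|_{\mathrm{eucl}} \leq |t|\,\|\Omega\|_{\mathrm{eucl}} + \|E'\|_{\mathrm{eucl}}$ is bounded in terms of the bound on $|t|$. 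Everything else is elementary linear algebra plus the discreteness of the lattice. No symplectic input is needed beyond the volume inequality already built into the statement; the lemma is purely about the positive cone of an inner product space of signature $(1,k)$.
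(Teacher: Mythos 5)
Your proof is correct and is essentially the paper's argument: the paper also observes that the volume inequality puts $\Omega$ in the positive cone of the signature-$(1,k)$ intersection form, passes to a Lorentzian orthonormal basis with first vector proportional to $\Omega$ (which is exactly your decomposition $E = t\,\Omega + E'$ with $E' \in \Omega^{\perp}$ negative definite), and concludes that the slab $\{\langle u,u\rangle=-1,\ |\langle\Omega,u\rangle|\le N\}$ is closed and bounded, hence meets the integer lattice in finitely many points. The only thing to tidy is the identification of $\Omega(E)$ with an intersection pairing, which requires writing $E = aL - b_1E_1-\ldots-b_kE_k$ (note the signs) so that $\tfrac{1}{2\pi}\Omega(E) = \langle\Omega,(a;b_1,\ldots,b_k)\rangle$.
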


\begin{proof}
We will refer to the Lorentzian inner product on $\R^{1+k}$:
$$ \langle u, v \rangle = u_0 v_0 - u_1 v_1 - \ldots - u_k v_k $$
for $u = (u_0;u_1,\ldots,u_k)$ and $v = (v_0;v_1,\ldots,v_k)$.
Then 
$\calH_{-1}$ consists of exactly those elements $E$ in the lattice
that have the form
$$ E = aL - b_1 E_1 - \ldots - b_k E_k $$ 
with $u := (a;b_1,\ldots,b_k) \in \Z^{1+k}$
and $\langle u , u \rangle = -1$.
(Thinking of $E$ as a homology class, the vector $u$ encodes its 
Poincar\'e dual.)
For such an $E$, we have
$$ \frac{1}{2\pi} \Omega(E) = \langle \Omega , u \rangle .$$
Because $\Z^{1+k}$ is closed in $\R^{1+k}$, it is enough to show
that the map
$$ u \mapsto \langle \Omega , u \rangle $$
from
$ \calH_{-1}^{\R} := \{ u \in \R^{1+k} \ | \ 
                           \langle u , u \rangle = -1 \} $
to $\R$ is proper.

Recall that $\langle \Omega , \Omega \rangle >0$ (by the volume inequality);
by rescaling, we assume without loss of generality that 
$\langle \Omega , \Omega \rangle = 1$.
Setting $\eps_0 := \Omega$, 
by the Gram-Schmidt procedure there exist 
$\eps_1$, $\ldots$, $\eps_k$ such that
$\langle \eps_0 , \eps_0 \rangle = 1$,
$\langle \eps_j , \eps_j \rangle = -1$ for $1 \leq j \leq k$,
and $\langle \eps_i , \eps_j \rangle = 0$ for $i \neq j$.
In this basis, 
the bilinear form $\left< , \right>$ and hence the set $\calH_{-1}^{\R}$
remain unchanged,
$\Omega$ is represented by the vector $(1;0,\ldots,0)$,
and the map $u \mapsto \langle \Omega , u \rangle$ becomes
$(u_0;u_1,\ldots,u_k) \mapsto u_0$.
It is enough to show that the preimage in $\calH_{-1}^{\R}$
of the interval $[-N,N]$ is compact for each $N > 0$.
This preimage consists of the set of those $(u_0;u_1,\ldots,u_k)$ 
that satisfy the conditions
$u_0^2 - u_1^2 - \ldots - u_k^2 = -1$ and $u_0 \in [-N,N]$.
This set is compact because it is closed and bounded.

\end{proof}

\begin{Definition} \labell{def:cremona}
Let $k \geq 3$.
For any vector $v = (\lambda ; \delta_1, \ldots , \delta_k)$,
define
$$\defect(v) = \delta_1 + \delta_2 + \delta_3 - \lambda,$$
and define the Cremona transformation by 
$$\cremona(v) = (\lambda' ; \delta_1', \ldots , \delta_k'),$$
where
$$\begin{array}{lcl}
\lambda' & = & \lambda - \defect(v) \\
\delta_j' & = &
\begin{cases}
 \delta_j - \defect(v) & \text{ if } 1 \leq j \leq 3 \\
 \delta_j              & \text{ if } 4 \leq j \leq k .
\end{cases}
\end{array} $$
\end{Definition}

\begin{Lemma} \labell{no accumulation 3}
Let $\Omega = (\v)$ be a vector that satisfies the volume inequality
$ \lambda^2 - \delta_1^2 - \ldots \delta_k^2 > 0$.
Then the set of real numbers $\delta_i'$ that occur 
among the last $k$ entries
in vectors $\Omega' = (\lambda';\delta_1',\ldots,\delta_k')$
that can be obtained from $(\v)$ by iterations of the Cremona transformation
(Definition~\ref{def:cremona}) and permutations of the last $k$ entries
has no accumulation points.
\end{Lemma}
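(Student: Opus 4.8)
The plan is to reinterpret each allowed operation on vectors as pre-composition of the homomorphism $\Omega$ (in the sense of~\ref{identify}) with an isometry of the intersection form on the lattice $\Z L \oplus \Z E_1 \oplus \ldots \oplus \Z E_k$, and then to invoke the properness statement of Lemma~\ref{no accumulation again} for the original vector $\Omega$.

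First I would identify the Cremona transformation of Definition~\ref{def:cremona} with a reflection of the lattice. Put $\alpha = L - E_1 - E_2 - E_3$; then $\alpha \cdot \alpha = -2$, and the orthogonal reflection $R_0(x) = x + (x \cdot \alpha)\,\alpha$ is a lattice isometry satisfying $R_0(L) = 2L - E_1 - E_2 - E_3$, $R_0(E_i) = L - E_j - E_\ell$ whenever $\{i,j,\ell\} = \{1,2,3\}$, and $R_0(E_j) = E_j$ for $j \geq 4$. A direct check on these basis vectors, using the identification of~\ref{identify}, shows that if $v = (\v)$ encodes $\Omega$ then $\cremona(v)$ encodes $\Omega \circ R_0$. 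Likewise, permuting the last $k$ entries of $v$ according to a permutation $\sigma$ of $\{1,\ldots,k\}$ corresponds to pre-composing $\Omega$ with the isometry $P_\sigma$ that fixes $L$ and sends $E_j \mapsto E_{\sigma(j)}$. Since $R_0$ and every $P_\sigma$ preserve the intersection form, any finite sequence of Cremona transformations and permutations of the last $k$ entries carries $\Omega$ to $\Omega \circ \Psi$ for some isometry $\Psi$ of the lattice, namely the composition of the corresponding $R_0$'s and $P_\sigma$'s.

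It then remains to read off the entries. If $\Omega' = \Omega \circ \Psi$ is such a vector and $1 \leq j \leq k$, then the $j$-th of the last $k$ entries of $\Omega'$ equals $\frac{1}{2\pi}\Omega'(E_j) = \frac{1}{2\pi}\Omega(\Psi(E_j))$, and $\Psi(E_j) \in \calH_{-1}$ because $\Psi$ is an isometry and $E_j \cdot E_j = -1$. Hence the set of real numbers that occur among the last $k$ entries of vectors reachable from $(\v)$ is contained in $\frac{1}{2\pi}\,\Omega(\calH_{-1})$. Since $\Omega$ satisfies the volume inequality $\lambda^2 - \delta_1^2 - \ldots - \delta_k^2 > 0$, Lemma~\ref{no accumulation again} tells us that the map $E \mapsto \Omega(E)$ on $\calH_{-1}$ is proper, so $\Omega(\calH_{-1})$ meets each bounded interval in a finite set; in particular it has no accumulation points in $\R$, and neither does any subset of it, which is exactly the assertion.

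The only genuine computation is the identification of $\cremona$ with the reflection $R_0$ — a short, bookkeeping-heavy check on the basis $L, E_1, E_2, E_3, E_4$ — together with the analogous (and obvious) statement for permutations; everything after that is formal and reduces to Lemma~\ref{no accumulation again}. The one point to keep straight is that the operations act on cohomology vectors by pre-composition with the associated lattice map, and that it is precisely the fact that $R_0$ and the $P_\sigma$ are honest isometries which guarantees that an arbitrary word in them is again an isometry of the lattice, hence maps $\calH_{-1}$ into itself; note that no hypothesis on the intermediate vectors is needed, since the bound is applied only to the original $\Omega$.
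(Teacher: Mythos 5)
Your proof is correct and follows essentially the same route as the paper's: both realize the Cremona transformation and the permutations as isometries of the lattice intersection form (you present the Cremona map as the reflection through $L-E_1-E_2-E_3$, the paper lists its action on the basis, but these are the same map), observe that the entries of any reachable vector are values of the original $\Omega$ on classes of square $-1$, and conclude via the properness statement of Lemma~\ref{no accumulation again}.
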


\begin{proof}
Identifying $\R^{1+k}$ with the set of homomorphisms from the lattice
$\Z L \oplus \Z E_1 \oplus \ldots \oplus \Z E_k$ to $\R$ 
as in paragraph \ref{identify},
the Cremona transformation of $\R^{1+k}$ is induced by the transformation
of the lattice that is given by 
$$\begin{array}{lll}
 L & \mapsto & 2L - E_1 - E_2 - E_3 \\
 E_1 & \mapsto & L - E_2 - E_3 \\
 E_2 & \mapsto & L - E_3 - E_1 \\
 E_3 & \mapsto & L - E_1 - E_2 \\
 E_j & \mapsto & E_j \quad \text{ if } 4 \leq j \leq k.
\end{array}$$
Similarly, the permutations of $\delta_1,\ldots\delta_k$
are induced from the transformations of the lattice
that preserve $L$ and permute $E_1,\ldots,E_k$.
Thus, if $\Omega' = (\lambda';\delta_1',\ldots,\delta_k')$
is obtained from $\Omega = (\v)$ by iterations
of the Cremona transformation and permutations of the last $k$ entries,
then each $\delta_j' = \frac{1}{2\pi} \Omega'(E_j)$
is equal to $\frac{1}{2\pi} \Omega(E)$ where $E$ is obtained from $E_j$
by the corresponding transformations of the lattice.
Because these transformations preserve the intersection form on the lattice,
we conclude that, for each $j$, the entry $\delta_j'$ belongs to the set
$ \{ \frac{1}{2\pi} \Omega(E) \ | \ E \cdot E = -1 \}$.
By Lemma~\ref{no accumulation again}, this set has no accumulation points.
\end{proof}

\begin{Definition} \labell{standard c move}
Let $k \geq 3$.
The \textbf{standard Cremona move} on $\R^{1+k}$
(cf.\ McDuff and Schlenk \cite{mcsc}) 
is the composition of the following two maps:
\begin{enumerate}
\item[(i)]
The map $(\v) \mapsto (\lambda; \delta_{i_1}, \ldots, \delta_{i_k} )$
that permutes the last $k$ entries 
such that $\delta_{i_1} \geq \ldots \geq \delta_{i_k}$.
\item[(ii)]
The map $v \mapsto \begin{cases}
 \cremona(v) & \text{ if } \defect(v) \geq 0 \\
 v & \text{ otherwise. }
\end{cases}$
\end{enumerate}
\end{Definition}

\begin{Lemma} \labell{std Cremona properties}
\begin{enumerate}
\item
The standard Cremona move is a piecewise linear continuous map
from $\R^{1+k}$ to $\R^{1+k}$.

\item
The standard Cremona move preserves the \textbf{forward positive cone}
$$ \left\{ (\v) \ | \ 
   \lambda^2 - \delta_1^2 - \ldots - \delta_k^2 > 0 \text{ and }  \lambda > 0
\right\}. $$

\item \labell{decreasing}
If $v' = (\lambda';\delta_1',\ldots,\delta_k')$
is obtained from $v = (\v)$ by the standard Cremona move
but is not equal to $v$, then
\begin{enumerate}
\item
$\delta_i' \leq \delta_i$ for all $i$, and for at least one $i$
we have $\delta_i' < \delta_i$; and
\item $\lambda' < \lambda$.
\end{enumerate}

\item
The vectors that are fixed by the standard Cremona move are exactly
the reduced vectors (see Definition~\ref{reduced form}).

\end{enumerate}
\end{Lemma}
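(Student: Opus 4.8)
The plan is to verify each of the four claims directly from Definitions~\ref{def:cremona} and~\ref{standard c move}, treating them in a slightly different order from the way they are listed, since (3) and (4) are the substantive ones and (1)--(2) are largely bookkeeping.

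\textbf{Claim (1): piecewise-linearity and continuity.} First I would observe that step~(i) of the standard Cremona move is continuous: $\R^{1+k}$ is covered by finitely many closed cones, one for each ordering of $\delta_1,\ldots,\delta_k$, on each of which the sorting map is linear (a coordinate permutation), and these linear maps agree on overlaps (where two $\delta_i$'s are equal). Hence step~(i) is piecewise linear and continuous. For step~(ii), note that $\defect$ is a linear functional, so $\{\defect(v)\ge 0\}$ and $\{\defect(v)\le 0\}$ are closed half-spaces covering $\R^{1+k}$; on the first, the map is $\cremona$, which is linear by Definition~\ref{def:cremona}; on the second it is the identity; and on their intersection $\{\defect(v)=0\}$ we have $\cremona(v)=v$, so the two pieces agree. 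Thus step~(ii) is piecewise linear and continuous, and so is the composition. (Here it matters that we apply step~(ii) \emph{after} step~(i), but $\cremona$ only looks at $\delta_1,\delta_2,\delta_3$; the argument is unaffected.)

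\textbf{Claim (4): fixed points are the reduced vectors.} I would do this next because it clarifies the geometry. A vector $v$ is fixed iff it is fixed by step~(i) and by step~(ii). Being fixed by step~(i) means $\delta_1\ge\cdots\ge\delta_k$. Given that, being fixed by step~(ii) means either $\defect(v)<0$, or $\defect(v)\ge 0$ and $\cremona(v)=v$; but if $\defect(v)>0$ then $\lambda'=\lambda-\defect(v)\ne\lambda$, so $\cremona(v)\ne v$, while if $\defect(v)=0$ then $\cremona(v)=v$ automatically. So $v$ is fixed by step~(ii) iff $\defect(v)\le 0$, i.e. $\delta_1+\delta_2+\delta_3\le\lambda$. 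Combining, $v$ is fixed iff it satisfies both conditions in~\eqref{conditions-1}, i.e. iff it is reduced. I should double-check the edge case where the input has its $\delta$'s out of order but already satisfies $\delta_1+\delta_2+\delta_3\le\lambda$ after sorting: such a $v$ is \emph{not} fixed (step~(i) moves it), consistent with its not being reduced.

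\textbf{Claim (3): strict decrease.} Suppose $v'$ is obtained from $v$ by the move and $v'\ne v$. Since the move is step~(ii)$\circ$step~(i), and step~(i) (a permutation of the $\delta$'s) changes the \emph{multiset} $\{\delta_i\}$ not at all and changes $\lambda$ not at all, the inequality $v'\ne v$ forces step~(ii) to be nontrivial, i.e. applied to the sorted vector $w=(\lambda;\delta_{i_1},\ldots,\delta_{i_k})$ we have $\defect(w)\ge 0$ and $v'=\cremona(w)\ne w$, hence in fact $\defect(w)>0$ (as $\defect(w)=0$ gives $\cremona(w)=w$). Writing $d=\defect(w)>0$: then $\lambda'=\lambda-d<\lambda$, giving (b); and the last-$k$ entries of $v'$ are $\delta_{i_1}-d,\delta_{i_2}-d,\delta_{i_3}-d,\delta_{i_4},\ldots,\delta_{i_k}$, which as a multiset is obtained from $\{\delta_i\}$ by decreasing the three \emph{largest} elements by $d>0$; hence for every $i$, $\delta_i'\le\delta_i$ and strict inequality holds for at least one $i$ (in fact for at least three, counting multiplicity), giving (a). The one point to state carefully is that ``$\delta_i'\le\delta_i$ for all $i$'' and ``$\delta_i'<\delta_i$ for some $i$'' refer to the sorted (non-increasing) orderings of the two multisets; since decreasing the top three entries of a sorted list by the same positive amount and re-sorting still yields a list that is entrywise $\le$ the original, this is immediate.

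\textbf{Claim (2): preservation of the forward positive cone.} The remaining point is that $\cremona$ preserves $\{\lambda^2-\sum\delta_j^2>0,\ \lambda>0\}$; since step~(i) obviously does (it permutes the $\delta_j$'s and fixes $\lambda$), it suffices to treat step~(ii), and the only nontrivial case is $v'=\cremona(v)$. By Lemma~\ref{no accumulation 3}'s proof, $\cremona$ is induced by the lattice automorphism $L\mapsto 2L-E_1-E_2-E_3$, $E_1\mapsto L-E_2-E_3$, etc., which preserves the intersection form; equivalently, under the identification of paragraph~\ref{identify} the quantity $\lambda^2-\sum\delta_j^2$ equals $\frac1{(2\pi)^2}\langle\Omega,\Omega\rangle$ in the Lorentzian form, and $\cremona$ acts as a Lorentz transformation (it is an isometry of $\langle\cdot,\cdot\rangle$), so $\lambda'^2-\sum(\delta_j')^2=\lambda^2-\sum\delta_j^2>0$ is preserved. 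For the sign of $\lambda$: the Lorentz transformation $\cremona$ is in the identity component of $O(1,k)$ — indeed it is realized by a finite product of the reflections/permutations generating the relevant Weyl group and lies in $SO^+(1,k)$ — so it preserves the two sheets of the cone $\{\langle\Omega,\Omega\rangle>0\}$; since it maps the sheet containing, say, $(1;0,\ldots,0)$ to itself (one checks $\cremona(1;0,\ldots,0)$ has positive first coordinate), it preserves $\{\lambda>0\}\cap\{\lambda^2-\sum\delta_j^2>0\}$. Alternatively, and perhaps more cleanly, one argues directly: if $\lambda^2>\sum\delta_j^2\ge\delta_1^2+\delta_2^2+\delta_3^2$ and $\lambda>0$, then by Cauchy--Schwarz $\delta_1+\delta_2+\delta_3\le\sqrt3\,\sqrt{\delta_1^2+\delta_2^2+\delta_3^2}<\sqrt3\,\lambda$, and combining with $d=\defect(v)=\delta_1+\delta_2+\delta_3-\lambda$ one gets $\lambda'=\lambda-d=2\lambda-(\delta_1+\delta_2+\delta_3)>2\lambda-\sqrt3\lambda=(2-\sqrt3)\lambda>0$.

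\textbf{Expected main obstacle.} The only place demanding genuine care is Claim~(2), specifically pinning down that $\cremona$ preserves not just $\lambda^2-\sum\delta_j^2>0$ (which is automatic from it being a lattice isometry) but also the sign $\lambda>0$, i.e. that it preserves the \emph{forward} sheet rather than swapping sheets. The Cauchy--Schwarz computation above sidesteps the Lorentz-group topology entirely and is probably the cleanest route; I would use that. Everything else — Claims (1), (3), (4) — is a direct unwinding of the definitions with attention to the sorted-multiset conventions and the degenerate case $\defect=0$.
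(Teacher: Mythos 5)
The paper offers no proof of this lemma (it is explicitly left as an exercise), so there is nothing to compare against; your argument is essentially complete and correct, and the Cauchy--Schwarz computation for $\lambda'>0$ in Claim~(2) is exactly the right way to avoid any appeal to the topology of the Lorentz group. (Your discarded alternative there contains a small slip: a single Lorentzian reflection $\tau_e$ has determinant $-1$ and so lies in the orthochronous group $O^+(1,k)$ rather than $SO^+(1,k)$; orthochronicity, equivalently $\langle e,e\rangle<0$, is what preserves the forward sheet. Since you use Cauchy--Schwarz instead, this does not matter.)

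The one place that deserves more care is Claim~(3), specifically your assertion that ``$v'\neq v$ forces step~(ii) to be nontrivial.'' Under the literal coordinate-wise reading of the lemma this is false: if $v$ is unsorted but $\mathrm{sort}(v)$ already satisfies $\delta_1+\delta_2+\delta_3\leq\lambda$, then $v'=\mathrm{sort}(v)\neq v$ while step~(ii) is the identity, so $\lambda'=\lambda$ and the $\delta$-multiset is unchanged --- a counterexample to both (3)(a) and (3)(b) as stated. Your sorted-multiset convention, which you do announce at the end of that paragraph, is what makes the implication valid, and it is the reading under which the lemma is true and suffices for Lemma~\ref{finitesteps2} (note that in the iteration a ``pure permutation'' step lands on a reduced vector and hence can occur only as the final nontrivial step). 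You should state this convention \emph{before} invoking it, since the quoted assertion is exactly where the edge case is absorbed. Relatedly, in Claim~(4) the implication ``fixed by the composition $\Rightarrow$ fixed by step~(i)'' needs the observation that $\cremona$ applied with strictly positive defect lowers $\lambda$ and so cannot undo a nontrivial sort; you have that fact in Claim~(3) but should cite it explicitly to rule out an accidental fixed point of the composition. With those two clarifications the proof is complete.
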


We leave the proof of Lemma~\ref{std Cremona properties}
as an exercise to the reader.

\begin{Remark} \labell{not in gp}
Consider the group of transformations of $\R^{1+k}$
that is generated by the Cremona transformation
(Definition~\ref{def:cremona}) and by permutations of the last $k$ entries.
The standard Cremona move is not an element of this group,
but on each vector $v$ it acts through some element of this group
(that depends on $v$).
\end{Remark}

\begin{Lemma} \labell{finitesteps2}
Let $k \geq 3$.
For every vector $v$ in the forward positive cone in $\R^{1+k}$
there exists a positive integer $m$ such that
applying $m$ iterations of the standard Cremona move to $v$
yields a reduced vector in the forward positive cone.
\end{Lemma}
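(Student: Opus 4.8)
The plan is to use the properness statement of Lemma~\ref{no accumulation 3} together with the monotonicity in Lemma~\ref{std Cremona properties}\eqref{decreasing} to rule out an infinite sequence of strictly-decreasing standard Cremona moves. First I would argue that the process terminates: suppose, for contradiction, that the standard Cremona move, applied repeatedly to $v = (\v)$, never reaches a fixed point. By Lemma~\ref{std Cremona properties}(2) all the iterates $v^{(0)} = v, v^{(1)}, v^{(2)}, \ldots$ lie in the forward positive cone, so in particular $v$ satisfies the volume inequality and Lemma~\ref{no accumulation 3} applies to it. Since none of the $v^{(m)}$ is a fixed point, by part~\eqref{decreasing} of Lemma~\ref{std Cremona properties} the first entries form a strictly decreasing sequence $\lambda^{(0)} > \lambda^{(1)} > \lambda^{(2)} > \cdots$ of positive reals, and each of the last $k$ entries is weakly decreasing along the sequence: $\delta_j^{(m+1)} \le \delta_j^{(m)}$ for all $j$ and all $m$.

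Next I would extract the contradiction from Lemma~\ref{no accumulation 3}. Each iterate $v^{(m)}$ is obtained from $v$ by a composition of Cremona transformations and permutations of the last $k$ entries (Remark~\ref{not in gp}), so every entry $\delta_j^{(m)}$ belongs to the set $S$ of real numbers occurring among the last $k$ entries of such iterates; by Lemma~\ref{no accumulation 3} the set $S$ has no accumulation points. For a fixed index $j$, the sequence $\delta_j^{(0)} \ge \delta_j^{(1)} \ge \cdots$ is weakly decreasing, bounded below by $0$ (Lemma~\ref{std Cremona properties}(2) forces positivity, or at worst it is bounded below because it is a decreasing sequence of values in the discrete set $S \cap [0,\delta_j^{(0)}]$), and takes values in $S$; a weakly decreasing sequence in a set with no accumulation points is eventually constant. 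Hence there is an integer $m_0$ after which $\delta_j^{(m)}$ is constant for every $j = 1, \ldots, k$ simultaneously (take the maximum of the finitely many stabilization indices). But the first entry satisfies $\lambda^{(m)} = \delta_1^{(m)} + \delta_2^{(m)} + \delta_3^{(m)} - \defect(v^{(m)})$; more directly, once all $\delta_j^{(m)}$ are constant, a further non-trivial standard Cremona move would have to strictly decrease some $\delta_j^{(m)}$ by part~\eqref{decreasing}(a), a contradiction. Therefore $v^{(m)}$ is a fixed point of the standard Cremona move for all $m \ge m_0$, i.e.\ applying $m_0$ iterations yields a fixed point.

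Finally, by Lemma~\ref{std Cremona properties}(4) a fixed point of the standard Cremona move is exactly a reduced vector, and by Lemma~\ref{std Cremona properties}(2) it still lies in the forward positive cone; so with $m = m_0$ we are done. I expect the main obstacle to be the termination argument itself — specifically, making rigorous that the weakly decreasing sequences $\delta_j^{(m)}$ are eventually constant. This needs the two ingredients in tandem: the values lie in a set with no accumulation points (Lemma~\ref{no accumulation 3}), which prevents an infinite strictly-decreasing sequence from converging, and the monotonicity (Lemma~\ref{std Cremona properties}\eqref{decreasing}) which guarantees the sequences are monotone in the first place and that any non-trivial move strictly decreases something. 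The bookkeeping of combining the $k$ separate stabilization indices into one, and checking that stabilization of the last $k$ entries forces the move to be trivial, is routine once the no-accumulation input is in hand.
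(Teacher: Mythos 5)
Your proposal follows essentially the same route as the paper: use Remark~\ref{not in gp} and Lemma~\ref{no accumulation 3} to place all the entries $\delta_j^{(m)}$ in a set with no accumulation points, combine this with the monotonicity in Lemma~\ref{std Cremona properties}\eqref{decreasing} to force eventual stabilization, and invoke part (4) to identify the stable vector as reduced. One small correction to your justification of the lower bound: part (2) of Lemma~\ref{std Cremona properties} does \emph{not} force the entries $\delta_j^{(m)}$ to be positive --- membership in the forward positive cone only asserts $\lambda^{(m)}>0$ and the volume inequality --- and your fallback ``$S\cap[0,\delta_j^{(0)}]$'' presupposes the nonnegativity you are trying to establish. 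The bound you actually need is that the $\delta_j^{(m)}$ all lie in the bounded interval $(-\lambda,\lambda)$, which does follow from part (2): the volume inequality gives $(\delta_j^{(m)})^2<(\lambda^{(m)})^2$, and $\lambda^{(m)}\le\lambda$ by part (3b). With that fix (which is exactly how the paper argues), a bounded monotone sequence in a set with no accumulation points is eventually constant, and the rest of your argument goes through.
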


\begin{proof}
Let $v = (\v)$ be a vector in the forward positive cone,
and let $v^{(n)} = (\lambda^{(n)} , \delta_1^{(n)} , \ldots ,  
                                    \delta_k^{(n)} ) $
be the vector that is obtained from $v$ by applying $n$ iterations
of the standard Cremona move.
By Lemma~\ref{std Cremona properties}, for all $n$
\begin{itemize}
\item $\lambda^{(n)} > 0$
\item $(\lambda^{(n)})^2 - (\delta_1^{(n)})^2 - \ldots - (\delta_1^{(n)})^2
                     > 0$
\item $\lambda^{(n)} \leq \lambda$.
\end{itemize}
The second inequality implies 
that $(\delta_i^{(n)})^2 < (\lambda^{(n)})^2$.
The first and third inequalities imply 
that $(\lambda^{(n)})^2 \leq \lambda^2$.
So the numbers $\delta_i^{(n)}$ all lie 
in the bounded interval $(-\lambda,\lambda)$.
By Lemma~\ref{no accumulation 3} and Remark~\ref{not in gp}, 
the set of numbers $\{ \delta_i^{(n)} \}_{1 \leq i \leq k,\ n \in \N} $ 
is finite.  
The third and fourth items of Lemma~\ref{std Cremona properties}
then imply that the sequence of vectors $v^{(n)}$
is eventually constant and hence reduced.
\end{proof}

\begin{Example}
Let $k=6$ and $\frac{1}{3} < \delta < \frac{2}{5}$.
Then the vector 
$(1;\, \delta,\, \delta,\, \delta,\, \delta,\, \delta,\, \delta)$
is not reduced.
Applying the Cremona transformation, we get the vector
$(2-3\delta;\, 
1-2\delta,\, 1-2\delta,\, 1-2\delta,\, \delta,\, \delta,\, \delta)$.
Permuting, we get
$(2-3\delta;\ \delta,\ \delta,\ \delta,\ 1-2\delta,\ 1-2\delta,\ 1-2\delta )$.
Applying the Cremona transformation again, we get
$(4-9\delta;\,
  2-5\delta,\, 2-5\delta,\, 2-5\delta,\, 1-2\delta,\, 1-2\delta,\, 1-2\delta )$;
permuting again, we get
$(4-9\delta;\,
   1-2\delta,\, 1-2\delta,\, 1-2\delta,\, 
   2-5\delta,\, 2-5\delta,\, 2-5\delta )$.
Applying the Cremona transformation a third time, we get
$(5-12\delta;\, 2-5\delta,\, 2-5\delta,\, 2-5\delta,\, 
                2-5\delta,\, 2-5\delta,\, 2-5\delta )$,
which has positive entries and is reduced.
\eoe
\end{Example} 

\begin{Remark} \labell{rk:rational}
In Lemma~\ref{finitesteps2}, if the entries of $v$ are integers, 
then applying iterations of the standard Cremona move
eventually yields a reduced vector by a simpler reason:
then $\lambda^{(n)}$ is a strictly decreasing sequence
of positive integers, so it must be finite.
A similar argument was used in \cite[Proposition 1]{Li}
and again in~\cite[Lemma~3.4]{Li-Li02},
\cite[Lemma~4.7]{Li-Liu}, and~\cite[Prop.~2.3]{ZGQ}.
\end{Remark}

\bigskip

{
We will refer to the genus zero Gromov Witten invariant 
with point constraints,
$$ \GW \colon H_2(M_k) \to \Z .$$
For the precise definition of this invariant, see~\cite{nsmall}.
Fixing a blowup form $\omega$, if $\GW(A) \neq 0$ 
then for generic $\omega$-tamed almost complex structure $J$ 
there exists a $J$-holomorphic sphere in the class $A$.
(We recall that $J$ is $\omega$-tame
if $\omega(u,Ju) > 0$ for all nonzero tangent vectors $u$.)

The Gromov-Witten invariant is the same for all the blowup forms;
this follows from Lemma \ref{deformation class}.
Lemma \ref{deformation class} also implies that the first Chern class
$c_1(TM_k) \in H^2(M_k)$
is the same for all the blowup forms.
Moreover, 
the first Chern class and the Gromov Witten invariant
are consistent under the natural inclusion maps $H_2(M_k) \to H_2(M_{k+1})$
and the natural projection maps  
$H^2(M_{k+1}) \to H^2(M_{k})$;
see \cite[Theorem~1.4]{hu}, \cite[Proposition 3.5]{LP}, 
and the explanation in \cite[Appendix]{algorithm}.

\begin{Lemma}[Characterization of exceptional classes]
\labell{calE and J}
For a homology class $E$ in $H_2(M_k)$, the following conditions are equivalent.
\begin{enumerate}
\item[(a)]
There exists a blowup form $\omega$ such that the class $E$ 
is represented by an embedded $\omega$-symplectic sphere 
with self intersection $-1$.
\item[(b)]
\begin{enumerate}
   \item[(i)]
   $c_1(TM)(E) = 1$,
   \item[(ii)]
$E \cdot E = -1$, and
   \item[(iii)]
$\GW(E) \neq 0$.
\end{enumerate}
\item[(c)]
For every blowup form $\omega$,
the class $E$ is represented by an embedded $\omega$-symplectic sphere 
with self intersection $-1$.
\end{enumerate}
\end{Lemma}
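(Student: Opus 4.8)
The plan is to prove the cyclic implications $(a) \Rightarrow (b) \Rightarrow (c) \Rightarrow (a)$; the last one is trivial (taking any blowup form, in particular the given $\omega$, witnesses $(a)$), so the work is in $(a) \Rightarrow (b)$ and $(b) \Rightarrow (c)$.

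For $(a) \Rightarrow (b)$: suppose $E$ is represented by an embedded $\omega$-symplectic sphere $\Sigma$ with $\Sigma \cdot \Sigma = -1$. Condition (ii) is immediate since $E \cdot E = [\Sigma] \cdot [\Sigma] = -1$. Condition (i) follows from the adjunction formula for embedded symplectic spheres: $c_1(TM_k)(E) = E \cdot E + 2 - 2g = -1 + 2 = 1$ since the genus $g = 0$. For condition (iii), the idea is that an embedded symplectic $(-1)$-sphere in class $E$ persists under deformation and, in a generic $\omega$-tame almost complex structure making $\Sigma$ holomorphic, is the unique representative, so it contributes $\pm 1$ to $\GW(E)$. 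More carefully, I would invoke that the genus-zero Gromov--Witten invariant with point constraints is deformation-invariant (stated in the excerpt just above the Lemma) and is computed from a blowup form for which $E$ is represented by a $J$-holomorphic $(-1)$-sphere; an embedded $J$-holomorphic sphere of self-intersection $-1$ is regular and rigid, and automatic transversality together with positivity of intersections forces it to be the only curve in its class through the appropriate number of generic points, giving $\GW(E) = \pm 1 \neq 0$.

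For $(b) \Rightarrow (c)$: fix an arbitrary blowup form $\omega$. Since $\GW(E) \neq 0$, for a generic $\omega$-tame $J$ there is a $J$-holomorphic sphere in class $E$. I would then argue, using positivity of intersections and the constraints (i) and (ii), that this $J$-holomorphic representative is in fact a \emph{connected embedded} sphere of self-intersection $-1$. The standard argument: write the $J$-holomorphic curve in class $E$ as a possibly reducible/multiply-covered configuration $\sum m_i A_i$ of simple curves; each component has a non-negative pairing against appropriate classes, $c_1$ is additive, $E \cdot E = -1 < 0$ forces exactly one component to be the genuinely problematic one, and a count using $c_1(E) = 1$ together with the adjunction inequality rules out bubbling, so the class $E$ is represented by a simple embedded $J$-holomorphic sphere; such a sphere is automatically $\omega$-symplectic with self-intersection $E \cdot E = -1$. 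Since this holds for generic $J$ and embedded $J$-holomorphic $(-1)$-spheres are stable under perturbation of $J$ (automatic transversality), the conclusion holds for \emph{every} $\omega$-tame $J$, in particular for the genuine one, giving (c).

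The main obstacle is the positivity-of-intersections / bubbling analysis in the step $(b) \Rightarrow (c)$: one must rule out that a $J$-holomorphic representative of $E$ degenerates into a reducible or multiply covered configuration, and extract from the algebraic data $c_1(TM_k)(E) = 1$, $E \cdot E = -1$ the fact that the representative is a single embedded sphere. This is a standard but somewhat delicate four-dimensional $J$-holomorphic curve argument (in the spirit of McDuff's work on rational surfaces, already cited in the excerpt), so I would streamline it by citing \cite{algorithm} and the structure theory of exceptional spheres in rational surfaces rather than reproving it from scratch. The other steps are essentially formal consequences of adjunction, the additivity of $c_1$ and $\GW$ under blowup (cited above), and deformation invariance of $\GW$.
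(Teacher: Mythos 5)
Your proposal is correct and follows essentially the same route the paper indicates: the paper does not write out a proof but attributes the lemma to exactly the ingredients you use -- the adjunction formula for $(a)\Rightarrow(b)$ and for embeddedness, McDuff's ``$C_1$ lemma'' style argument with positivity of intersections and Hofer--Lizan--Sikorav regularity for the $\GW$ count and for $(b)\Rightarrow(c)$, and Gromov compactness to control degenerations -- deferring details to the companion manuscript. (Only a cosmetic remark: for $(c)$ you need just one $\omega$-tame $J$ with an embedded representative, so the final claim about \emph{every} tame $J$ is unnecessary and, as stated, slightly too strong.)
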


Lemma~\ref{calE and J} follows from  
McDuff's ``$C_1$ lemma" \cite[Lemma 3.1]{rational-ruled}, 
Gromov's compactness theorem \cite[1.5.B]{gromovcurves}, 
and the adjunction formula \cite[Corollary 1.7]{nsmall}.
For some details, see \cite{algorithm}.

\begin{Definition}[Definition of exceptional classes]
\labell{new def exceptional}
A homology class $E$ in $H_2(M_k)$ is \textbf{exceptional} 
if it satisfies the conditions (a), (b), (c) of Lemma \ref{calE and J}.
\end{Definition}

\begin{Remark} [Examples of exceptional classes]
\labell{expexp}
The classes $E_1,\ldots,E_k$ are all exceptional, 
and so are the classes $L-E_{i}-E_{j}$ for all $1\leq i < j \leq k$.
The first fact is by the definition of a blowup form. 
The second fact is since $L-E_{i}-E_{j}$ contains the proper transform 
in the complex blowup $M_k$ of the unique complex line in $\CP^2$ 
through the points $p_i$ and $p_j$; 
this proper transform 
is an embedded complex sphere in $M_k$ 
hence an embedded $\omega$-symplectic sphere with respect to 
a K\"ahler blowup form $\omega$ on $M_k$.
\end{Remark}

}

{
The following lemma is well known. 
It partially follows from Lemma \ref{calE and J} and Remark \ref{expexp}. 
For details, see \cite{algorithm}.

\begin{Lemma} \labell{gwlemma} 
Each of the following homology classes 
has a non-zero Gromov Witten invariant:
$$ L, \quad E_1, \quad \ldots \quad E_k, \quad L-E_i, \quad 
L-E_i-E_j \text{ for $i \neq j$}, \quad \text{ and } \quad
2L - E_1 - E_2 - E_3 .$$
\end{Lemma}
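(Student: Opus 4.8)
The plan is to derive each case from the characterization of exceptional classes (Lemma~\ref{calE and J}) together with Remark~\ref{expexp}, handling the one non-exceptional class $L$ separately. Recall that condition (b) of Lemma~\ref{calE and J} says $E$ is exceptional iff $c_1(TM_k)(E)=1$, $E\cdot E=-1$, and $\GW(E)\ne 0$; in particular, every exceptional class has non-zero Gromov--Witten invariant. So the classes $E_1,\ldots,E_k$ and $L-E_i-E_j$ for $i\ne j$ are immediately covered, since they are exceptional by Remark~\ref{expexp}.

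For the remaining classes $L$, $L-E_i$, and $2L-E_1-E_2-E_3$, I would argue geometrically using a K\"ahler blowup form $\omega$ on $M_k$, exactly as in Remark~\ref{expexp}. For $L-E_i$: the proper transform of a generic complex line in $\CP^2$ through $p_i$ is an embedded complex sphere in $M_k$ in the class $L-E_i$, hence an embedded $\omega$-holomorphic (so $\omega$-symplectic) sphere; since $c_1(TM_k)(L-E_i)=3-1=2$ and $(L-E_i)\cdot(L-E_i)=0$, a standard count (the genus-zero GW invariant with the appropriate number of point constraints, as set up in \cite{nsmall}) of such lines through $p_i$ and through one further generic point is $1$, so $\GW(L-E_i)\ne 0$. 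For $L$ itself: a generic line in $\CP^2$ missing all the $p_j$ lifts to an embedded sphere in class $L$, and $\GW(L)\ne 0$ because through two generic points of $\CP^2$ there is a unique line; here $c_1(TM_k)(L)=3$ and $L\cdot L=1$, consistent with the expected dimension for two point constraints. For $2L-E_1-E_2-E_3$: this is the Cremona image of $E$ (say $E_j$ with $j\ge 4$, or $E_1$ after relabeling—more simply, it is $\cremona$ applied to an exceptional class), and since the Cremona transformation is realized by a diffeomorphism of $M_k$ acting on homology by the lattice map of Lemma~\ref{no accumulation 3} which preserves $c_1$ and $\GW$ (by Lemma~\ref{deformation class} the GW invariant is diffeomorphism-invariant among blowup forms), non-vanishing of $\GW(E_1)$ transfers to $\GW(2L-E_1-E_2-E_3)$; alternatively, the proper transform of a generic conic through $p_1,p_2,p_3$ gives the geometric representative directly, and the GW count of conics through $p_1,p_2,p_3$ and one additional generic point is $1$ (with $c_1(TM_k)(2L-E_1-E_2-E_3)=6-3=3$).

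The main obstacle is not any single hard step but rather making the GW counts precise: one must check that for each listed class the number of point constraints dictated by the expected dimension formula matches a configuration for which classical enumerative geometry (lines through $1$ or $2$ points, conics through $5$ points restricted by three blown-up points plus one free point) gives an honest count of $1$, and that the genus-zero GW invariant of \cite{nsmall} indeed computes this number. This is routine but must be done carefully to avoid off-by-one errors in the dimension count; as the statement says, the lemma is well known, and the cleanest writeup simply invokes Lemma~\ref{calE and J} for the exceptional classes and the naturality of $\GW$ under $H_2(M_k)\to H_2(M_{k+1})$ (reducing each computation to the case $M_1$ or $M_3$) plus the classical enumerative facts, deferring the detailed verification to \cite{algorithm}.
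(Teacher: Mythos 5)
Your overall strategy matches the paper's: the paper states that the lemma ``partially follows from Lemma~\ref{calE and J} and Remark~\ref{expexp}'' --- which covers exactly the exceptional classes $E_1,\ldots,E_k$ and $L-E_i-E_j$ --- and defers the remaining classes $L$, $L-E_i$, $2L-E_1-E_2-E_3$ to the classical enumerative counts spelled out in \cite{algorithm}. So the division of labour and the enumerative inputs you invoke are the intended ones.

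Two concrete slips, though. First, $2L-E_1-E_2-E_3$ is not the Cremona image of any exceptional class: it has self-intersection $+1$, and the lattice transformation inducing the Cremona move sends $L\mapsto 2L-E_1-E_2-E_3$ while $E_1\mapsto L-E_2-E_3$, so the class you want is $\cremona(L)$, not $\cremona(E_1)$. Moreover, this transfer route is risky in the paper's logical order: the fact that the Cremona diffeomorphism preserves the deformation class of blowup forms (which is what you need to move $\GW$ across it) is itself established in Lemma~\ref{cremona is diffeo 2} \emph{using} Lemma~\ref{gwlemma}, so the direct conic count is the argument to rely on. Second, you commit exactly the off-by-one you warn against: since $c_1(TM_k)(2L-E_1-E_2-E_3)=3$, the invariant carries \emph{two} point constraints (the same number as for $L$), and the relevant classical fact is that there is a unique conic through the five points $p_1,p_2,p_3$ and two further generic points; with only one additional point you would have a one-dimensional family and no count. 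With these corrections the argument is complete and agrees with the intended proof.
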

}

\begin{Lemma} \labell{cremona is diffeo 2}
Let $k \geq 3$.
Let $v \in \R^{1+k}$ be a vector in the forward positive cone.
Let $v'$ be the vector that is obtained from $v$
by the standard Cremona move.
Then there exists a blowup form $\omega$ on $M$
whose cohomology class is encoded by $v$
if and only if
there exists a blowup form $\omega'$ on $M$
whose cohomology class is encoded by $v'$.
Moreover, every such $\omega$ and $\omega'$ are diffeomorphic.
\end{Lemma}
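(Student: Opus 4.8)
The plan is to reduce the statement to the case of the bare Cremona transformation, and then to realize that transformation geometrically by a diffeomorphism of $M_k$ acting on homology in a prescribed way. First I would dispose of the trivial case: if $\defect(v) < 0$, then by Definition~\ref{standard c move} the standard Cremona move only permutes the last $k$ entries, and $v' = \sigma \cdot v$ for a permutation $\sigma$ of $\{E_1,\ldots,E_k\}$. Such a permutation is induced by a diffeomorphism of $M_k$ (for instance, by the complex diffeomorphism interchanging the blowup points $p_i$, after an isotopy moving one configuration of points to another), which carries a blowup form encoded by $v$ to one encoded by $v'$; existence of one form is then equivalent to existence of the other, and the two forms are diffeomorphic. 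So the substance is the case $\defect(v) \geq 0$, where $v' = \cremona(v)$ followed by a permutation, and it suffices to treat $v' = \cremona(v)$.

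Next I would treat $\cremona(v)$. The key point is that the lattice automorphism displayed in the proof of Lemma~\ref{no accumulation 3},
$$ L \mapsto 2L - E_1 - E_2 - E_3, \quad E_i \mapsto L - E_j - E_\ell \ (\{i,j,\ell\}=\{1,2,3\}), \quad E_j \mapsto E_j \ (j \geq 4), $$
preserves the intersection form and sends $c_1 = 3L - E_1 - \cdots - E_k$ to itself (a quick check: $c_1 \mapsto 3(2L - E_1-E_2-E_3) - \sum_{i\le 3}(L - E_j - E_\ell) - \sum_{j\ge 4} E_j = 3L - E_1 - \cdots - E_k$). I would then invoke Lemma~\ref{deformation class} / Lemma~\ref{coh implies diff} together with the characterization of blowup forms: a cohomology class $\Omega$ on $M_k$ contains a blowup form iff each of $L, E_1,\ldots,E_k$ is represented by an embedded symplectic sphere, and by Lemma~\ref{calE and J} and Remark~\ref{expexp} the classes $L-E_i-E_j$ and $2L-E_1-E_2-E_3$ are themselves exceptional-type classes carried by embedded symplectic spheres for \emph{every} blowup form. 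Hence if $\omega$ is a blowup form encoded by $v$, its cohomology class, pulled back under a diffeomorphism inducing the above lattice automorphism, is encoded by $\cremona(v) = v'$ — because $\langle v', E_j\rangle = \langle v, \Phi(E_j)\rangle$ with $\Phi$ the lattice map above, and a direct computation gives exactly the formulas of Definition~\ref{def:cremona}. The forms encoded by $v$ and by $v'$ are then diffeomorphic, and existence of one is equivalent to existence of the other.

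The main obstacle — and the step I would spend the most care on — is producing the actual diffeomorphism of $M_k$ inducing $\Phi$ on homology, i.e.\ knowing that $\Phi$ is realized by a genuine self-diffeomorphism (not merely an abstract lattice automorphism), and that under it blowup forms go to blowup forms. For this I would argue as follows: by Lemma~\ref{gwlemma} the classes $L-E_1-E_2$, $L-E_2-E_3$, $L-E_1-E_3$ and $2L-E_1-E_2-E_3$ all have nonzero Gromov--Witten invariants, so for a generic tamed $J$ they are represented by $J$-holomorphic spheres; choosing a K\"ahler blowup form and the integrable $J$, these are the proper transforms of the three lines through pairs of $\{p_1,p_2,p_3\}$ and of the conic through $p_1,p_2,p_3$ (degenerate, but one can perturb the point configuration). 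Blowing down these $-1$-spheres and blowing up again in the complementary configuration realizes the classical Cremona transformation of $\CP^2$ as a diffeomorphism of $M_k$, and this diffeomorphism acts on $H_2(M_k)$ exactly by $\Phi$; tracking the blowup spheres through this construction shows it carries blowup forms to blowup forms. Alternatively, and more economically, I would cite Lemma~\ref{deformation class}: since the set of blowup forms is a single equivalence class and $\Phi$ preserves the intersection form and $c_1$, one shows that the pulled-back class $\Phi^*[\omega]$ is again represented by a blowup form by checking that all of $L, E_1, \ldots, E_k$ have the required nonvanishing $\GW$ and are realized by symplectic spheres for it — which is immediate from Lemma~\ref{calE and J} once one verifies (iii) $\GW(\Phi^{-1}E_i)\ne 0$, and $\Phi^{-1}E_1 = L - E_2 - E_3$ etc.\ have nonzero $\GW$ by Lemma~\ref{gwlemma}. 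Either way, the positivity of $v'$ (needed to know the $\delta_i'$ arise from genuine exceptional spheres) is guaranteed because $v'$ lies in the forward positive cone by Lemma~\ref{std Cremona properties}(2). Finally, I would note the "Moreover" clause follows because any two blowup forms with the same cohomology class are diffeomorphic by Lemma~\ref{coh implies diff}, so once we have produced one diffeomorphism carrying $\omega$ to a blowup form encoded by $v'$, it differs from any other such $\omega'$ by a further homology-trivial diffeomorphism.
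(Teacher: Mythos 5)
Your proposal is correct and its skeleton matches the paper's proof exactly: reduce to the two generating transformations (a permutation of $\delta_1,\ldots,\delta_k$ or the Cremona transformation), realize each by a diffeomorphism of $M_k$, use Lemma~\ref{gwlemma} to check that the images of $L,E_1,\ldots,E_k$ have nonzero Gromov--Witten invariants so that blowup forms pull back to blowup forms, and finish with Lemma~\ref{coh implies diff}. The one genuine difference is how you produce the diffeomorphism inducing the Cremona lattice automorphism $\Phi$. The paper simply cites Wall~\cite{wall}: every automorphism of the intersection form of $M_k$ is realized by a diffeomorphism, so no geometry is needed at this step. You instead construct the diffeomorphism as the lift of the classical quadratic Cremona involution based at $p_1,p_2,p_3$ (blow down the three disjoint $-1$-spheres in the classes $L-E_i-E_j$, recover $\CP^2$, blow up again). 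This is also valid and more concrete --- it exhibits the diffeomorphism as a biholomorphism for a suitable complex structure --- at the cost of needing the points in general position and the uniqueness of blow-downs to see that the composite is a well-defined self-diffeomorphism of $M_k$ acting by $\Phi$. Two small caveats: the conic class $2L-E_1-E_2-E_3$ plays no role in the three-point Cremona involution (and a conic through only three points is not unique), so that part of your description should be dropped; and your ``more economical'' alternative is not actually self-contained, since Lemma~\ref{deformation class} together with the fact that $\Phi$ preserves the intersection form and $c_1$ does not by itself produce a diffeomorphism inducing $\Phi$ --- for that you still need either Wall's theorem or your geometric construction. Since you present the geometric construction as your primary route, the argument goes through.
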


\begin{proof}
By Remark~\ref{not in gp}, the vectors $v$ and $v'$ differ
either by the Cremona transformation (Definition~\ref{def:cremona})
or by a transformation that permutes the last $k$ entries.

Identifying $H^2(M_k;\R)$ with $\R^{1+k}$ as in Definition~\ref{encode},
each of these transformations is induced by a diffeomorphism of $M_k$.
Indeed, the Cremona transformation is induced by a diffeomorphism
according to Wall~\cite{wall}.
As for the permutations, they are induced by diffeomorphisms of $M_k$
that are obtained from diffeomorphisms of $\CP^1$ 
that permute the marked points $p_1,\ldots,p_k$
and are biholomorphic on neighbourhoods of these points.

Each of these diffeomorphisms takes $L,E_1,\ldots,E_k$
to homology classes with non-zero Gromov Witten invariants
(see Lemma~\ref{gwlemma});
this implies that these diffeomorphisms
pull back blowup forms to blowup forms.
This and Lemma~\ref{coh implies diff} imply the last part of the result.
\end{proof}

(As we will note in Section \ref{sec:characterization},
by results of Tian-Jun Li, Bang-He Li, and Ai-Ko Liu,
a reduced vector encodes a blowup form
if and only if it is contained in the forward positive cone 
and its entries are positive.)

\begin{noTitle} [\textbf{Algorithm to obtain a reduced form}]
\labell{r:algorithm} 
Let $k \geq 3$. 
Let $v$ be a vector in the forward positive cone in $\R^{1+k}$.
\begin{itemize}
\item[Step 1:]
If $v$ is reduced, declare $v_{\red} = v$ and stop.
\item[Step 2:]
If $v$ is not reduced, replace it by its image under the standard Cremona move
and return to Step 1.
\end{itemize}
By Lemma~\ref{finitesteps2}, 
this algorithm terminates, and it produces a reduced vector $v_{\red}$ 
in the forward positive cone.
Moreover, by Lemma~\ref{cremona is diffeo 2},
if $v$ encodes the cohomology class of a blowup form $\omega$,
then $v_{\red}$
encodes the cohomology class of a blowup form that is diffeomorphic
to $\omega$.
\end{noTitle}

\begin{proof}[Proof of Proposition~\ref{exists reduced}]
The proposition follows immediately from paragraph~\ref{r:algorithm} 
because a vector that encodes the cohomology class of a blowup form
must lie in the forward positive cone.
\end{proof}

\begin{Lemma}\labell{l:continuous}
The function $v \mapsto v_{\red}$ of paragraph~\ref{r:algorithm},
from the forward positive cone to the intersection of the forward positive cone
with the cone of reduced vectors, is continuous.
\end{Lemma}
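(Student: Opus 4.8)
The plan is to use two facts from Lemma~\ref{std Cremona properties}: the standard Cremona move $C$ is continuous, and its fixed points are exactly the reduced vectors. For $v$ in the forward positive cone let $m(v)$ be the number of iterations of $C$ that the algorithm of paragraph~\ref{r:algorithm} performs, so that $v_{\red}=C^{m(v)}(v)$. It is enough to prove that $m$ is \emph{locally bounded}: if $m\le M$ on a neighbourhood $U$ of a point $v_0$, then $v_{\red}=C^{M}(v)$ for every $v\in U$, since $C^{m(v)}(v)=v_{\red}$ is reduced and is therefore fixed by the remaining $M-m(v)$ applications of $C$; as $C^{M}$ is an $M$-fold composition of the continuous map $C$, it is continuous, so $v\mapsto v_{\red}$ is continuous on $U$, in particular at $v_0$. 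Since $v_0$ is arbitrary, the lemma follows.

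To prove local boundedness, suppose the contrary: there are $v_0$ in the forward positive cone and $v^{(j)}\to v_0$ with $m_j:=m(v^{(j)})\to\infty$. Let $v^{(j)}=v^{(j,0)},v^{(j,1)},\ldots,v^{(j,m_j)}$ be the successive iterates produced by the algorithm and let $\lambda^{(j,l)}$ be the first coordinate of $v^{(j,l)}$; all these iterates lie in the forward positive cone (Lemma~\ref{std Cremona properties}), so $\lambda^{(j,l)}>0$. For $l<m_j$ the vector $v^{(j,l)}$ is not reduced, hence not fixed by $C$, hence $\lambda^{(j,l+1)}<\lambda^{(j,l)}$ by the third part of Lemma~\ref{std Cremona properties}; thus $\lambda^{(j,0)}>\lambda^{(j,1)}>\cdots>\lambda^{(j,m_j)}>0$ are $m_j+1$ distinct numbers in $(0,\lambda^{(j,0)}]$. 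As in the proof of Lemma~\ref{no accumulation 3}, each $v^{(j,l)}$ is obtained from $v^{(j)}$ by precomposition with a composition of lattice automorphisms (permutations of the $E_i$ and the Cremona transformation of the lattice), which preserve the intersection form; hence $\lambda^{(j,l)}=\tfrac{1}{2\pi}v^{(j)}(E^{(j,l)})$, where $E^{(j,l)}$ is the image of $L$ under that composition, so $E^{(j,l)}\cdot E^{(j,l)}=1$, and, as in the proof of Lemma~\ref{no accumulation again}, $\tfrac{1}{2\pi}v^{(j)}(E^{(j,l)})=\langle v^{(j)},u^{(j,l)}\rangle$ for the Lorentzian form $\langle\cdot,\cdot\rangle$, with $u^{(j,l)}\in\Z^{1+k}$ and $\langle u^{(j,l)},u^{(j,l)}\rangle=1$.

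The main point, and the step I expect to be the main obstacle, is to make the properness of Lemma~\ref{no accumulation again} uniform in a neighbourhood of $v_0$. For $v$ in the forward positive cone the quadratic form $q_v(u):=2\langle v,u\rangle^2/\langle v,v\rangle-\langle u,u\rangle$ is positive definite (since $v^{\perp}$ is negative definite when $\langle v,v\rangle>0$) and its coefficients depend continuously on $v$ (because $\langle v,v\rangle$ does not vanish). Fix a compact neighbourhood $\bar U$ of $v_0$ inside the forward positive cone; then there are $c_0>0$ and $B_0$ with $q_v\ge c_0\lVert\cdot\rVert^2$ (Euclidean norm) and $2(\lambda_0+1)^2/\langle v,v\rangle\le B_0$ for all $v\in\bar U$, where $\lambda_0$ is the first coordinate of $v_0$. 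For $j$ large, $v^{(j)}\in\bar U$ and $\lambda^{(j,0)}\le\lambda_0+1$, so $c_0\lVert u^{(j,l)}\rVert^2\le q_{v^{(j)}}(u^{(j,l)})=2(\lambda^{(j,l)})^2/\langle v^{(j)},v^{(j)}\rangle-1<B_0$; thus every $u^{(j,l)}$ lies in the fixed finite set $S:=\{u\in\Z^{1+k}\ :\ \lVert u\rVert<\sqrt{B_0/c_0}\}$. Then $\{\lambda^{(j,l)}\ :\ 0\le l\le m_j\}\subseteq\{\langle v^{(j)},u\rangle\ :\ u\in S\}$ has at most $\# S$ elements, contradicting the distinctness of the $m_j+1$ numbers $\lambda^{(j,l)}$ once $m_j\ge\# S$. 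Hence $m$ is locally bounded, which is what we needed. (Alternatively one could run the properness argument of Lemma~\ref{no accumulation again} directly and verify that the compact region it produces can be chosen uniformly for $\Omega$ near $v_0$; the form $q_v$ is merely a way to avoid invoking a varying Gram--Schmidt basis.)
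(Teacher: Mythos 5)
Your argument is correct, and its overall shape matches the paper's: both proofs reduce the lemma to showing that the number $m(v)$ of iterations performed by the algorithm of paragraph~\ref{r:algorithm} is locally bounded, and then write $v \mapsto v_{\red}$ locally as a fixed finite composition of the continuous standard Cremona move. But the mechanism by which you obtain local boundedness is genuinely different. The paper introduces the family of reflection hyperplanes $e^\perp$ for $e \in \Z^{1+k}$ with $\langle e , e \rangle = -2$, proves via Solomon's discreteness argument that this family is locally finite in the forward positive cone (Lemma~\ref{l:jake}), shows that the standard Cremona move restricted to each of the resulting chambers is a Lorentzian isometry taking chambers to chambers (Lemma~\ref{chamber2chamber}), deduces that $m$ is constant on each chamber, and concludes because a small neighbourhood meets only finitely many chambers. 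You instead combine the strict decrease of the first coordinate under every non-trivial move (Lemma~\ref{std Cremona properties}) with a version of the properness statement of Lemma~\ref{no accumulation again} made uniform in the base point and applied to the lattice orbit of $L$ rather than of the $E_j$: near a fixed $v_0$, all lattice vectors $u$ with $\langle u , u \rangle = 1$ and $0 < \langle v , u \rangle \leq \lambda_0 + 1$ lie in a single fixed finite set, so the strictly decreasing sequence $\lambda^{(j,0)} > \cdots > \lambda^{(j,m_j)}$ has uniformly bounded length. Your auxiliary form $q_v(u) = 2\langle v , u \rangle^2 / \langle v , v \rangle - \langle u , u \rangle$ is indeed positive definite on the forward positive cone (writing $u = \alpha v + w$ with $w \in v^\perp$ gives $q_v(u) = \alpha^2 \langle v , v \rangle - \langle w , w \rangle$, and $v^\perp$ is negative definite), and the compactness argument supplying the uniform constants $c_0$ and $B_0$ is sound. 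In effect your route is the paper's proof of termination (Lemma~\ref{finitesteps2}, via Lemma~\ref{no accumulation 3}) upgraded to be uniform in $v$; it is more quantitative and avoids the chamber geometry entirely, whereas the paper's chamber picture yields extra structural information (that $v \mapsto v_{\red}$ is piecewise a Lorentzian isometry on an explicit polyhedral decomposition), which the paper goes on to use in the subsequent remarks.
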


\begin{noTitle} 
As before,
we consider $\R^{1+k}$ with its Lorentzian inner product
$ \langle u, v \rangle = u_0 v_0 - u_1 v_1 - \ldots - u_k v_k $
for $u = (u_0;u_1,\ldots,u_k)$ and $v = (v_0;v_1,\ldots,v_k)$.
The \emph{null cone}
is the set of $x$ in $\R^{1+k}$ such that $\langle x , x \rangle   = 0$,
the \emph{positive cone}
is the set of $x$ in $\R^{1+k}$ such that $\langle x , x \rangle   > 0$,
and, as already noted, the \emph{forward positive cone}
is the set of $x=(x_0;\ldots,x_k)$ 
such that $\langle x , x \rangle   > 0$ and $x_0 > 0$.

For every nonzero vector $e$, its Lorentzian orthocomplement $e^\perp$
is a hyperplane in $\R^{1+k}$; the hyperplane $e^\perp$ determines 
the vector $e$ up to scalar; every hyperplane is obtained in this way. 
\begin{itemize}
\item
If $\langle e , e \rangle < 0$, 
then the hyperplane $e^\perp$ meets the positive cone,
and the restriction to $e^\perp$ of the Lorentzian metric on $\R^{1+k}$
is also a Lorentzian metric, of type $(1,k-1)$.
\item
If $\langle e , e \rangle > 0$, 
then the hyperplane $e^\perp$ 
does not meet the positive cone, it meets the null cone only at the origin,
and the restriction to $e^\perp$
of the Lorentzian metric on $\R^{1+k}$ is negative definite.
\item
If $\langle e , e \rangle = 0$, 
then the hyperplane $e^\perp$ does not meet the positive cone,
it meets the null cone along the line $\R e$, 
and the restriction to $e^\perp$
of the Lorentzian metric on $\R^{1+k}$ is negative semi-definite
with null space $\R e$. 
\end{itemize}

For a vector $e \in \R^{1+k}$ with $\langle e , e \rangle \neq 0$,
the reflection $\tau_e(v) = v - 2 
  \frac{\langle v , e \rangle }{\langle e , e \rangle} e$
is a Lorentzian isometry that fixes the hyperplane $e^\perp$. 
We call such a map a \textbf{Lorentzian reflection}.
This reflection preserves the forward positive cone
if and only if $\langle e , e \rangle < 0$.
The map $e^\perp \mapsto \tau_e$, for $e$ such that $\langle e , e \rangle < 0$,
embeds the space of Lorentzian hyperplanes (with the topology induced from the 
Grassmannian) into the space of Lorentzian isometries.

The Cremona transformation is the Lorentzian reflection $\tau_e$
that corresponds to the vector $e = (1;1,1,1,0,\ldots,0)$.
The transposition that switches $\delta_i$ and $\delta_{i+1}$
is the Lorentzian reflection $\tau_e$
that corresponds to the vector $e = (0;0,\ldots,-1,1,0,\ldots,0)$
with $\delta_i=-1$, $\delta_{i+1}=1$, and other entries $=0$.
In both of these types of reflections, the vector $e$
has integer entries and satisfies $\langle e , e \rangle = -2$.
\end{noTitle}

The following lemma is a slight reformulation
of an argument of Jake Solomon \cite{solomon}.

\begin{Lemma} \labell{l:jake}
Every compact subset of the forward positive cone in $\R^{1+k}$
meets only finitely many hyperplanes of the form $e^\perp$
for $e \in \Z^{1+k}$ with $\langle e , e \rangle = -2$.
\end{Lemma}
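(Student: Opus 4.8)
The plan is to exploit the fact that a vector $e\in\Z^{1+k}$ with $\langle e,e\rangle=-2$ lies "outside" the forward positive cone, so that $e^\perp$ must cut through the cone in a controlled way. Concretely, let $K$ be a compact subset of the forward positive cone. First I would choose, by compactness, a single vector $\Omega$ in the forward positive cone (for instance a point of $K$, or a slightly fattened neighbourhood) together with a uniform bound: there is a constant $C>0$ such that every $v\in K$ satisfies $\langle v,v\rangle\ge 1/C$ and $\|v\|\le C$ in the ordinary Euclidean norm. The hyperplane $e^\perp$ meets $K$ iff there is some $v\in K$ with $\langle v,e\rangle=0$.

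The key step is a reverse Cauchy–Schwarz inequality in Lorentzian signature. Since $\langle e,e\rangle=-2<0$, the hyperplane $e^\perp$ carries a Lorentzian metric of type $(1,k-1)$, so it meets the positive cone; but I want a quantitative statement. If $v$ lies in the forward positive cone and $\langle v,e\rangle=0$, then $v$ lies in $e^\perp$, on which the quadratic form has a one-dimensional positive part; decomposing $v$ along a $\langle\cdot,\cdot\rangle$-orthonormal basis of $e^\perp$ adapted to its signature, one gets $\langle v,v\rangle \le (\text{positive coordinate of }v)^2$, and that positive coordinate is itself controlled by $\|v\|$. Combining this with $\langle e,e\rangle=-2$ and $\langle v,e\rangle=0$, an elementary manipulation (write $e = \alpha v + e'$ with $e'$ in the negative-definite part, so $-2=\langle e,e\rangle=\alpha^2\langle v,v\rangle + \langle e',e'\rangle \le \alpha^2\langle v,v\rangle$ forces a lower bound, and $\langle e',e'\rangle$ being negative definite bounds $\|e'\|$ in terms of $\|e\|$) yields a bound of the form $\|e\|\le f(C)$ for an explicit function $f$. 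Thus every $e$ whose orthocomplement meets $K$ satisfies $\|e\|\le f(C)$. Since $e\in\Z^{1+k}$, there are only finitely many such $e$, hence only finitely many distinct hyperplanes $e^\perp$.

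The main obstacle is making the Lorentzian reverse Cauchy–Schwarz step clean and uniform over $K$: one must be careful that the positive direction inside $e^\perp$ varies with $e$, so the argument should be organized intrinsically (using only the signature of $\langle\cdot,\cdot\rangle$ restricted to $e^\perp$, as recorded in the trichotomy preceding Lemma~\ref{l:jake}) rather than by picking bases. An equivalent and perhaps cleaner route: normalize $\Omega$ so that $\langle\Omega,\Omega\rangle=1$ and apply Gram–Schmidt exactly as in the proof of Lemma~\ref{no accumulation again}, so that in the new coordinates $\Omega=(1;0,\dots,0)$ and the condition $\langle\Omega,e\rangle=0$ reads $e_0=0$; then $\langle e,e\rangle=-2$ becomes $e_1^2+\dots+e_k^2=2$, which is a bounded (indeed compact) set, so only finitely many lattice points $e$ occur. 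To pass from the single vector $\Omega$ to the whole compact set $K$, observe that the set of $e$ with $e^\perp\cap K\neq\emptyset$ is contained in the union, over $v\in K$, of the corresponding bounded sets, and a compactness/continuity argument (the bound $f(C)$ depends only on the uniform constants $C$ above, not on the individual $v$) shows this union is still bounded. That uniformity is exactly where the compactness of $K$ is used, and it is the one place requiring a little care rather than a routine computation.
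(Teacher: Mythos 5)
Your argument is correct, but it takes a genuinely different route from the paper's. The paper observes that each reflection $\tau_e$ with $e\in\Z^{1+k}$, $\langle e,e\rangle=-2$, is represented by an integer matrix, deduces that the corresponding hyperplanes form a discrete subset of the set of Lorentzian hyperplanes, concludes that any accumulation hyperplane in the Grassmannian is non-Lorentzian and hence misses the forward positive cone, and finishes by compactness of the Grassmannian. You instead bound $e$ directly: if $e^\perp$ meets $K$ at some $v$, then $\langle v,e\rangle=0$ with $\langle v,v\rangle>0$ places $e$ in the hyperplane $v^\perp$, on which the Lorentzian form is \emph{negative definite} (the second bullet of the trichotomy preceding the lemma), so $\langle e,e\rangle=-2$ confines $e$ to a compact ellipsoid; compactness of $K$ makes the comparison between $-\langle\cdot,\cdot\rangle|_{v^\perp}$ and the Euclidean norm uniform in $v$ (e.g.\ $c:=\min\{-\langle w,w\rangle : v\in K,\ \langle v,w\rangle=0,\ \|w\|=1\}>0$ gives $\|e\|\le\sqrt{2/c}$), and integrality of $e$ finishes the proof. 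One small cleanup: since $\langle v,e\rangle=0$ forces $\alpha=0$ in your decomposition $e=\alpha v+e'$, the inequality $-2\le\alpha^2\langle v,v\rangle$ is vacuous and the intermediate claim about the positive coordinate of $v$ inside $e^\perp$ is not needed; the whole content is that $e=e'$ lies in the negative-definite complement of $v$, which is exactly the clean Gram--Schmidt version you give at the end (taking care that the change of basis distorts the lattice by a uniformly bounded amount over $K$). Your approach is more elementary and effective, yielding an explicit bound on the number of hyperplanes; the paper's is shorter and avoids any quantitative estimates at the cost of invoking compactness of the Grassmannian.
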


\begin{proof}
If $e$ has integer entries and satisfies $\langle e , e \rangle = -2$, 
then the $(1+k) \times (1+k)$ matrix that represents the reflection $\tau_e$ 
has integer entries. 
Because the set of matrices with integer entries
is a discrete subset of the set of all matrices,
the set of hyperplanes of the form $e^{\perp}$
for $e \in \Z^{1+k}$ with $\langle e , e \rangle = -2$
is discrete in the set of all Lorentzian hyperplanes 
in $\R^{1+k}$.
So a hyperplane that occurs as an accumulation point of such hyperplanes 
(in the Grassmannian) cannot be Lorentzian;
in particular, it cannot meet the forward positive cone.
(In fact, such a hyperplane must be tangent to the null cone.)
The lemma then follows from the compactness of the Grassmannian.
\end{proof}

The hyperplanes of Lemma~\ref{l:jake}
divide the forward positive cone into \emph{chambers}:
the intersections of the forward positive cone
with the closures of the connected components of the complements 
of these hyperplanes.
Note that a Lorentzian isometry takes chambers to chamber.

\begin{Lemma} \labell{chamber2chamber}
The restriction of the standard Cremona move to each chamber
coincides with a Lorentzian isometry that takes 
the chamber to another chamber.
\end{Lemma}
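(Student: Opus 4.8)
The plan is to show that on each chamber, the two pieces of the standard Cremona move — the permutation step (i) and the conditional Cremona step (ii) — are each locally given by a fixed Lorentzian isometry, and then invoke that a Lorentzian isometry sends chambers to chambers. First I would observe that the permutations and the Cremona transformation are all Lorentzian reflections $\tau_e$ with $e \in \Z^{1+k}$ and $\langle e,e\rangle = -2$, as spelled out in the paragraph preceding Lemma~\ref{l:jake}. The key point is that within a single chamber no vector lies on any hyperplane $e^\perp$ of Lemma~\ref{l:jake}; in particular the sign of $\langle v,e\rangle$ is constant on the (interior of the) chamber for every such $e$. For the permutation step, the condition $\delta_i \geq \delta_{i+1}$ is exactly $\langle v,e\rangle \leq 0$ for $e = (0;0,\ldots,-1,1,\ldots,0)$ (the vector with $-1$ in slot $i$ and $1$ in slot $i+1$), which satisfies $\langle e,e\rangle = -2$; hence the sorting permutation that step (i) applies is the same permutation $\sigma$ on the whole chamber, and $\sigma$ is a composition of the corresponding transpositions, i.e.\ a product of Lorentzian reflections, hence a Lorentzian isometry. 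Likewise, for step (ii), the sign of $\defect(v) = \langle v, e_0\rangle$ with $e_0 = (1;1,1,1,0,\ldots,0)$, $\langle e_0,e_0\rangle = -2$, is constant on the chamber, so either the Cremona reflection $\tau_{e_0}$ is applied throughout the chamber or the identity is applied throughout it; in both cases the map is a Lorentzian isometry.

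Second I would compose: the standard Cremona move restricted to the chamber equals $\psi \circ \sigma$, where $\sigma$ is the fixed sorting isometry and $\psi$ is either $\tau_{e_0}$ or the identity (whichever is forced by the sign of the defect after sorting, which is again constant on $\sigma(\text{chamber})$, itself a chamber). Since a composition of Lorentzian isometries is a Lorentzian isometry, the standard Cremona move agrees on the chamber with a single Lorentzian isometry $T$. Finally, because $T$ is a Lorentzian isometry it permutes the hyperplanes $\{e^\perp : e \in \Z^{1+k},\ \langle e,e\rangle = -2\}$ — indeed $T(e^\perp) = (Te)^\perp$ and $T$ has an integer matrix (it is a product of the integer matrices $\tau_e$), so $Te \in \Z^{1+k}$ with $\langle Te, Te\rangle = -2$ — and therefore $T$ carries the chamber decomposition of the forward positive cone to itself, sending our chamber onto another chamber.

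The main obstacle, and the one point requiring a little care, is the matching of the piecewise-defined move with a genuine isometry near the chamber walls: the definitions of steps (i) and (ii) use weak inequalities ($\delta_i \geq \delta_{i+1}$, $\defect(v) \geq 0$), so on a wall $e^\perp$ there is an ambiguity in which permutation or whether $\tau_{e_0}$ is applied. Since a reflection $\tau_e$ fixes $e^\perp$ pointwise, applying $\tau_e$ or not on $e^\perp$ gives the same values there, so the move is still well defined and continuous (consistent with Lemma~\ref{std Cremona properties}(1)); one just takes the chamber to be the \emph{closure} of a connected component, and notes that $T$ restricted to that closed chamber still agrees with the move there by continuity of both. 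With that understood, the argument above goes through, and chambers map to chambers.
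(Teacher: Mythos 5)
Your proposal is correct and follows essentially the same route as the paper: both identify the branches of the piecewise-linear move with the Lorentzian reflections $\tau_e$ for integral $e$ with $\langle e,e\rangle=-2$, observe that the branch-selecting sign conditions are constant on each chamber (so the move agrees there with a single integral Lorentzian isometry), and conclude that such an isometry permutes the hyperplane arrangement and hence the chambers. The only cosmetic difference is that the paper factors the move into elementary steps that are reflections on the coarser ``big chambers'' cut out by the facets of the reduced cone, whereas you argue directly on the finer chambers and treat the wall ambiguities explicitly (note the harmless sign slip: $\delta_i\geq\delta_{i+1}$ corresponds to $\langle v,e\rangle\geq 0$ for your $e$).
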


\begin{proof} 
Applying the standard Cremona move to a vector $(\v)$ 
is achieved by iterations of the following single step:
\begin{itemize}
\item
If the vector is reduced, then stop.
\item
Otherwise,
if $\delta_1,\ldots,\delta_k$ are not in weakly decreasing order,
let $i \in \{ 1,\ldots,k-1 \}$ be the smallest index
such that $\delta_{i} < \delta_{i+1}$, and 
switch $\delta_{i}$ and $\delta_{i+1}$.
\item
Otherwise, apply the Cremona transformation (Definition~\ref{def:cremona}).
\end{itemize}

Let $S_0$ denote the cone of reduced vectors.
The hyperplanes that are spanned by its facets
are the fixed point set $\{ \lambda = \delta_1 + \delta_2 + \delta_3 \}$
of the Cremona transformation
and the fixed point set 
$\{ \delta_{i} = \delta_{i+1} \}$ of the transposition
that switches $\delta_i$ and $\delta_{i+1}$ for $i \in \{1,\ldots,k-1\}$.
These hyperplanes divide $\R^{1+k}$ into ``big chambers":
the closures of the connected components of the complements 
of these hyperplanes.
The above single step, restricted to a ``big chamber",
coincides with a Lorentzian reflection.
The lemma then follows from the fact that every chamber
is contained in a ``big chamber".
\end{proof} 

\begin{proof}[Proof of Lemma~\ref{l:continuous}]
Let $x$ be a point in the forward positive cone.
By Lemma~\ref{l:jake}, there exists a neighbourhood $U$ of $x$
that meets only finitely many chambers.
For every chamber there exists a positive integer $m$
such that, on the chamber, the function $v \mapsto v_{\red}$
is obtained by applying $m$ iterations of the standard Cremona move;
this follows from Lemma~\ref{chamber2chamber} and from the fact that
the intersection of the set of reduced vectors 
with the forward positive cone is a union of chambers.
We conclude that there exists a positive integer $m$ 
such that, on the neighbourhood $U$ of $x$,
the function $v \mapsto v_{\red}$
is obtained by applying $m$ iterations of the standard Cremona move.
Because the standard Cremona move is continuous,
the function $v \mapsto v_{\red}$ is continuous near $x$.
Because $x$ was arbitrary,
the function is continuous on the entire forward positive cone.
\end{proof}

\begin{Remark}
We can now give another proof of Lemma~\ref{finitesteps2}.
Let $v$ be a vector in the forward positive cone.
Let $v'$ be a vector in the same chamber as $v$ and whose entries are rational.
By Lemma~\ref{chamber2chamber},
and since the intersection of the set of reduced vectors
with the forward positive cone is a union of chambers,
it is enough to show that there exists a positive integer $m$
such that applying $m$ iterations of the standard Cremona move to $v'$
yields a reduced vector.
This, in turn, follows by applying the argument of Remark~\ref{rk:rational}
to $Nv'$ where $N$ is a positive integer such that $Nv'$ has integer entries.
\end{Remark}

\begin{Remark}
Let $S_m$ denote the set of vectors in $\R^{1+k}$ 
that are brought to reduced form after $m$ or fewer iterations 
of the standard Cremona move 
(but are not necessarily in the forward positive cone).
Let $S$ denote the (increasing) union of the sets $S_m$.
(By Lemma~\ref{finitesteps2}, the forward positive cone is contained in $S$.)

By applying iterations of the standard Cremona move
until we reach a reduced vector, 
we obtain a function $v \mapsto v_{\red}$
that assigns to each vector in the set  $S$
a reduced vector, that is, a vector in $S_0$.
The restriction of this function to each $S_m$,
being the composition of $m$ continuous functions, is continuous.

In particular, $S_0$ is the set of reduced vectors,
given by the linear inequalities~\eqref{conditions-1}.
and the spans of its facets are the fixed point sets
of the Cremona transformation (Definition~\ref{def:cremona})
and of the $k-1$ transpositions of consecutive elements 
$\delta_i , \delta_{i+1}$ for $1 \leq i \leq k-1$.

Consider those cones that can be obtained from $S_0$ by Lorentzian reflections 
through the hyperplanes that are spanned by its facets.
Continue recursively; at each stage we have a collection 
of convex polyhedral cones 
and we add those cones that can be obtained from them 
by Lorentzian reflections through the hyperplanes that are spanned 
by the facets of $S_0$.
The set $S_m$ is a finite union of finite intersections
of such convex polyhedral cones. 
This implies that the union of the interiors of the sets $S_m$
is open and dense in $S$.
Because 
the function $v \mapsto v_{\red}$ is continuous on the interior
of each $S_m$, we conclude that this function is continuous
on an open and dense subset of $S$.
We don't know if this function is continuous on $S$
(or even on the interior of $S$).
\end{Remark}

\begin{Remark}\labell{variation of reduced}
Other authors \cite{Li-Li98,Li,Li-Li02,Li-Liu,ZGQ}
define ``reduced" by the slightly different conditions
$\delta_1 + \delta_2 + \delta_3 \leq \lambda$
and $\delta_1 \geq \ldots \geq \delta_k \geq 0$,
and they allow transformations that flip the signs of the $\delta_i$.
\end{Remark}

\section{Exceptional classes of minimal area}
\labell{sec:minimal}

Let $\omega$ be a blowup form on $M_k$. 
Lemma~\ref{calE and J} and Definition~\ref{new def exceptional}
imply that the set of exceptional classes of minimal $\omega$-area
only depends on the vector $v = (\v)$ 
that encodes the cohomology class $[\omega]$.
We denote this set by 
$$ \calE^v_{\min} .$$
In this section
we identify all the possibilities for the set $\calE^v_{\min}$;
see Theorem~\ref{thm:Emin}, Remark \ref{rk:Emin},
and Remark~\ref{min E for small k}.

More generally, let $\Omega$ be a cohomology class in $H^2(M_k;\R)$
that is encoded by a vector~$v=(\v)$, 
and assume that the set of values 
$\{ \left< \Omega, E \right> \ | \ E \text{ is an exceptional class} \}$
is bounded from below.   
Denote by $ \calE^v_{\min} $ the set of exceptional classes $E$
for which $\frac{1}{2\pi} \left< \Omega, E \right>$ is minimal.
If $v$ satisfies the volume inequality 
$\lambda^2 - \delta_1^2 - \ldots - \delta_k^2 > 0$,
then this set is non-empty and finite, by Lemma~\ref{no accumulation again}.

\bigskip
The following lemma is well known, and is deduced from the 
positivity of intersections of $J$-holomorphic curves 
in four dimensional manifolds 
\cite[Appendix E and Proposition 2.4.4]{nsmall}, 
the Hofer-Lizan-Sikorav regularity criterion \cite{HLS} 
(see also \cite[Lemma 3.3.3]{nsmall}), and the implicit function theorem, 
see \cite[Chapter 3]{nsmall}.  For more details, see \cite{algorithm}.

\begin{Lemma}[Positivity of intersections] 
\labell{positivity of intersections corollary}
Let $A$ and $B$ be homology classes in $H_2(M_k)$
that are linearly independent (over $\R$).
Suppose that $\GW(B) \neq 0$, that $\protect{c_1(TM_k)(A) \geq 1}$, 
and that $A$ is represented by a $J$ holomorphic sphere
for some almost complex structure $J$ that is tamed by some blowup form 
on $M_k$. 
Then the intersection number $A \cdot B$ is non-negative.

In particular, if $E$ is an exceptional class
and $B$ is a class that is not a multiple of~$E$
and with $\GW(B) \neq 0$, then $E \cdot B$ is non-negative.
\end{Lemma}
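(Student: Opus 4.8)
The plan is to prove Lemma~\ref{positivity of intersections corollary} by reducing it to the classical statement that two distinct irreducible $J$-holomorphic curves in a four-manifold intersect non-negatively. First I would choose a blowup form $\omega$ that tames the almost complex structure $J$ for which $A$ is represented by a $J$-holomorphic sphere; this $J$ exists by hypothesis, and by the remarks preceding Lemma~\ref{calE and J} the Gromov--Witten invariant $\GW$ is the same for all blowup forms, so from $\GW(B)\neq 0$ we get, for a \emph{generic} $\omega$-tamed almost complex structure, a $J$-holomorphic sphere in the class $B$. The difficulty is that the particular $J$ carrying the curve in class $A$ need not be generic, so I would instead perturb: pick a generic path of $\omega$-tamed almost complex structures from $J$ to a generic $J'$, use Gromov compactness (\cite[1.5.B]{gromovcurves}) to produce, for $J'$, a $J'$-holomorphic stable map in the class $B$, and keep $J = J'$ or else keep the original $J$ and instead produce the class-$B$ curve by a limiting argument. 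The cleanest route is: replace $J$ by a generic $\omega$-tamed $J'$ close to $J$; by the Hofer--Lizan--Sikorav regularity criterion \cite{HLS} (see \cite[Lemma~3.3.3]{nsmall}) and the implicit function theorem, an embedded $J$-holomorphic sphere of non-negative self-intersection persists under small perturbations, so $A$ is still represented by a $J'$-holomorphic sphere; and for generic $J'$ the class $B$ with $\GW(B)\neq 0$ is represented by a $J'$-holomorphic stable map.

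Next I would invoke positivity of intersections for $J$-holomorphic curves in dimension four (\cite[Appendix~E and Proposition~2.4.4]{nsmall}): if $C_A$ is the $J'$-holomorphic sphere in class $A$ and $C_B = \bigcup_i C_i$ is the image of the $J'$-holomorphic stable map representing $B$, then each local intersection of $C_A$ with each component $C_i$ contributes non-negatively, provided $C_A$ and $C_i$ have no common component. Since $A$ and $B$ are linearly independent over $\R$, and each component class $B_i$ of the stable map is a positive rational (in fact integral) multiple times a primitive class summing to $B$, no $B_i$ can be a multiple of $A$: if some $C_i$ shared its image with $C_A$, then $B_i$ would be a positive multiple of $A$, forcing $A$ and $B$ to be linearly dependent (the remaining components also have classes that pair non-negatively with things, but the key point is that the presence of one component equal to a multiple of $A$ already contradicts independence). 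Hence $C_A$ shares no component with $C_B$, and therefore
$$ A \cdot B = \sum_i (C_A \cdot C_i) \geq 0, $$
where each term is a sum of non-negative local intersection multiplicities.

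The condition $c_1(TM_k)(A) \geq 1$ enters to guarantee that the moduli space of $J'$-holomorphic spheres in class $A$ behaves well and that the perturbed curve exists; more precisely, together with $A \cdot A \geq 0$ (which holds here since $A$ is represented by an embedded sphere $C_A$ and, by the adjunction formula, $C_A \cdot C_A = -2 + c_1(TM_k)(A) \geq -1$, and one checks it is $\geq 0$ once one rules out the exceptional-class case or handles it directly) it ensures via automatic regularity that $C_A$ deforms. For the ``in particular'' statement, take $A = E$ an exceptional class: then by Lemma~\ref{calE and J} every blowup form represents $E$ by an embedded symplectic sphere of self-intersection $-1$, so $c_1(TM_k)(E) = 1 \geq 1$ by the adjunction formula, and for any $J$ tamed by a blowup form $E$ is represented by a $J$-holomorphic sphere (again Lemma~\ref{calE and J}); applying the general statement with $B$ not a multiple of $E$ and $\GW(B)\neq 0$ gives $E \cdot B \geq 0$.

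I expect the main obstacle to be the bookkeeping around the case $A \cdot A = -1$, i.e.\ when $A$ is (a multiple of) an exceptional class: there automatic regularity still applies because an embedded sphere of self-intersection $-1$ is also regular, but one must be careful that the limiting $J'$-holomorphic representative of $A$ is still irreducible and embedded (Gromov compactness could a priori produce a bubble), and one must verify that no component of the stable map for $B$ coincides with it — which is exactly where linear independence of $A$ and $B$ is used. All the analytic inputs (Hofer--Lizan--Sikorav regularity, the implicit function theorem, Gromov compactness, positivity of intersections) are cited in the statement's preamble, so the proof is a matter of assembling them in the right order; the details are as in \cite[Chapter~3]{nsmall} and \cite{algorithm}.
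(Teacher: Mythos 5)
Your overall strategy---perturb to a generic $J'$, keep a pseudoholomorphic representative of $A$ via automatic regularity, produce a representative of $B$ from $\GW(B)\neq 0$, and conclude by positivity of intersections---is exactly the assembly of ingredients that the paper cites (the paper gives no further detail and defers to \cite{algorithm}). However, one step is wrong as stated. You allow the representative of $B$ to be a stable map with several components $C_i$, and you claim that no component class can be a multiple of $A$ because that ``already contradicts independence'' of $A$ and $B$. It does not: if the stable map has components in classes $B_1=mA$ and $B_2=B-mA$, then $B$ need not be proportional to $A$, yet one component coincides with (a cover of) $C_A$, and its contribution to $A\cdot B=\sum_i A\cdot [C_i]$ is $m(A\cdot A)$, which is negative precisely in the case you most need (e.g.\ $A$ exceptional, $A\cdot A=-1$). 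So in the multi-component case your termwise inequality can fail, and linear independence of $A$ and $B$ does not rescue it.

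The repair is to avoid stable maps altogether: the fact quoted in the paper just before Lemma~\ref{calE and J} is that $\GW(B)\neq 0$ gives, for \emph{generic} $\omega$-tamed $J'$, an \emph{irreducible} $J'$-holomorphic sphere in the class $B$. First reduce to the somewhere injective case by writing $A=mA_0$ with $A_0$ the class of the underlying simple sphere; since $c_1(TM_k)(A)=m\,c_1(TM_k)(A_0)\geq 1$ and $c_1(TM_k)(A_0)\in\Z$, one gets $c_1(TM_k)(A_0)\geq 1$, and $A\cdot B=m\,A_0\cdot B$, while $A_0,B$ are still linearly independent. Then choose $J'$ generic and close enough to $J$ that the simple sphere in class $A_0$ persists (Hofer--Lizan--Sikorav regularity \cite{HLS} plus the implicit function theorem---this is where $c_1(TM_k)(A_0)\geq 1$ is used, and one should note that the given sphere need not be embedded, so it is the simple underlying curve one deforms). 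Now there are two irreducible $J'$-holomorphic spheres, in classes $A_0$ and $B$: either they have the same image, which forces $A$ and $B$ to be positive multiples of one class and contradicts linear independence, or they have no common component and positivity of intersections gives $A_0\cdot B\geq 0$, hence $A\cdot B\geq 0$. With this replacement the argument goes through; your treatment of the ``in particular'' clause via Lemma~\ref{calE and J} is fine.
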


\bigskip

We recall that
$$ c_1(TM_k) ( L ) = 3 \qquad \text{and} \qquad
   c_1(TM_k) ( E_1 ) = \ldots =
   c_1(TM_k) ( E_k ) = 1.$$

\bigskip

We have the following easy technical lemma.
Suppose $k \geq 3$.
Recall that a vector $(\v)$ with positive entries is \emph{reduced}
if $\delta_1 + \delta_2 + \delta_3 \leq \lambda$
and $\delta_1 \geq \ldots \geq \delta_k$.
Denote
$$
 F := L - E_1 \quad , \quad
 B := L - E_2 \quad , \quad
 E_{12} := L - E_1 - E_2.  $$

\begin{Lemma} \labell{linearlemma}
Let $k \geq 3$.  
Let $\Omega$ be a cohomology class in $H^2(M_k;\R)$ 
that is encoded by a vector $v=(\v)$ 
with positive entries that is reduced.
Let $A$ be a class in $H_2(M_k)$.
Suppose that $A$ is a multiple of one of the classes in the set
\begin{equation} \labell{list of classes}
 \{ L, \ E_1, \ \ldots, \ E_k, \ F, \ B, \ E_{12} \} ,
\end{equation}
and suppose that $c_1(TM_k)(A) \geq 1$.  Then
\begin{equation} \labell{geq deltaklinear}
 \frac{1}{2\pi} \left< \Omega , A \right> \geq \delta_k .
\end{equation}
Moreover, equality in~\eqref{geq deltaklinear} holds
if and only if 
either $A=E_{\ell}$ and $\delta_{\ell}=\delta_k$,
or $A=E_{12}$ and $\lambda-\delta_1-\delta_2=\delta_3=\delta_k$.
\end{Lemma}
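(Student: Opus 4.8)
The plan is to go through each of the finitely many classes in the list~\eqref{list of classes}, compute $\tfrac{1}{2\pi}\langle\Omega,A\rangle$ for any positive-integer multiple $A$ of that class satisfying $c_1(TM_k)(A)\geq1$, and check in each case that the result is $\geq\delta_k$, tracking when equality can occur. Since $c_1(TM_k)(E_j)=1$, $c_1(TM_k)(L)=3$, $c_1(TM_k)(F)=c_1(TM_k)(B)=2$, and $c_1(TM_k)(E_{12})=1$, the constraint $c_1(TM_k)(A)\geq1$ rules out negative multiples; and for $L,F,B$ it already forces the multiple to be a positive integer, while for $E_j$ and $E_{12}$ the multiple is a positive integer (a negative one would give $c_1<0$). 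So in every case $A=nC$ with $n\geq1$ an integer and $C$ a class in the list.

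First I would dispose of the ``large'' classes. For $A=nL$ we get $\tfrac{1}{2\pi}\langle\Omega,A\rangle=n\lambda\geq\lambda\geq\delta_1+\delta_2+\delta_3>\delta_k$, using reducedness and positivity of the entries; so no equality here. For $A=nF=n(L-E_1)$ we get $n(\lambda-\delta_1)$, and since $\lambda\geq\delta_1+\delta_2+\delta_3$ we have $\lambda-\delta_1\geq\delta_2+\delta_3>\delta_k$; similarly $A=nB$ gives $n(\lambda-\delta_2)\geq n(\delta_1+\delta_3)>\delta_k$. For $A=nE_j$ with $n\geq2$ we get $n\delta_j\geq2\delta_j\geq2\delta_k>\delta_k$. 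This leaves exactly the two borderline families: $A=E_\ell$ and $A=nE_{12}=n(L-E_1-E_2)$.

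For $A=E_\ell$ we have $\tfrac{1}{2\pi}\langle\Omega,A\rangle=\delta_\ell\geq\delta_k$ since the entries are weakly decreasing, with equality precisely when $\delta_\ell=\delta_k$. For $A=nE_{12}$ we get $n(\lambda-\delta_1-\delta_2)$; reducedness gives $\lambda-\delta_1-\delta_2\geq\delta_3\geq\delta_k$, hence $n(\lambda-\delta_1-\delta_2)\geq\lambda-\delta_1-\delta_2\geq\delta_k$. Equality forces $n=1$ together with $\lambda-\delta_1-\delta_2=\delta_3$ and $\delta_3=\delta_k$, i.e.\ the stated condition $\lambda-\delta_1-\delta_2=\delta_3=\delta_k$; conversely, under that condition $A=E_{12}$ realizes equality. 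Collecting the equality cases from the last two paragraphs gives exactly the ``moreover'' statement.

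I do not expect any genuine obstacle here: the argument is a finite case check, and the only mild subtlety is making sure the hypothesis $c_1(TM_k)(A)\geq1$ is used correctly to exclude negative (and, for $E_j$ and $E_{12}$, non-integer or $n=0$) multiples, so that in every case $A$ is an honest positive multiple of a class in the list. Once that bookkeeping is in place, the inequalities follow immediately from the reducedness conditions $\delta_1\geq\cdots\geq\delta_k>0$ and $\delta_1+\delta_2+\delta_3\leq\lambda$.
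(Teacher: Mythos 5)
Your proposal is correct and takes essentially the same route as the paper: reduce to $A=nC$ with $n\ge 1$ an integer and $C$ in the list, then run a finite case check against the reducedness inequalities, isolating $E_\ell$ and $E_{12}$ as the only equality cases. The one imprecise point is your justification that the multiple is a positive \emph{integer} for $L$, $F$, $B$: the constraint $c_1(TM_k)(A)\ge 1$ alone only yields $t\ge 1/3$ (resp.\ $t\ge 1/2$), since $c_1(TM_k)(A)$ being a positive integer does not by itself make $t$ integral when $c_1(TM_k)(C)=3$ or $2$. The paper closes this by observing that each listed class belongs to a $\Z$-basis of $H_2(M_k)$ (namely $\{L,E_1,\ldots,E_k\}$, $\{F,E_1,\ldots,E_k\}$, $\{F,B,E_{12},E_3,\ldots,E_k\}$), so an integral class that is a real multiple of one of them is automatically an integer multiple; with that one-line fix your argument is complete.
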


In this lemma, $A$ is a homology class over the \emph{integers},
and a-priori it is a \emph{real} multiple
of one of the classes in the set~\eqref{list of classes}.

\begin{proof}
Because $(\v)$ is a reduced vector,
\begin{align}
 & \min_{C \in \{L,E_1,\ldots,E_k,F,B,E_{12}\}}
       \frac{1}{2 \pi}\left<\Omega,C \right> \nonumber \\
 & \ \qquad  \qquad \qquad
    =\min\{\lambda, \ \delta_1, \ \ldots, \ \delta_k, \ 
               \lambda-\delta_1, \ \lambda-\delta_2, \
              \lambda-\delta_1-\delta_2  \} \nonumber \\
 & \ \qquad  \qquad \qquad = \delta_k .  \labell{min in list}
\end{align}
Moreover, the minimum is attained 
on a subset of $\{ E_1, \ldots, E_k, E_{12} \}$
that contains $E_{\ell}$ if and only if $\delta_{\ell} = \delta_k$
and that contains $E_{12}$
if and only if $\lambda - \delta_1 - \delta_2 = \delta_3 = \delta_k$.

Also note that $c_1(TM_k)(C)$ is positive 
for every $C \in \{ L, E_1, \ldots, E_k, F, B, E_{12} \}$.

Each of the sets
$$ \{L,E_1,\ldots,E_k\}, \,\,
   \{F,E_1,\ldots,E_k\}, \,\,  \{F,B,E_{12},E_3,\ldots,E_k\} ,$$
is a basis of $H_2(M_k)$ over $\Z$.
Therefore, the assumption that $A$ is a multiple of a class $C$
in $\{ L, E_1, \ldots, E_k, F, B, E_{12} \} $
and that $c_1(TM_k)(A) > 0$ 
is equivalent to 
$$ A = \gamma C \text{ for an integer } \gamma \geq 1 .$$
The lemma then follows from~\eqref{min in list}.
\end{proof}

\begin{Lemma} \labell{deltak is minimal}
Let $k \geq 3$.  
Let $\Omega$ be a cohomology class in $H^2(M_k;\R)$ 
that is encoded by a vector $v=(\v)$ with positive entries that is reduced.  
Let $A$ be a homology class in $H_2(M_k)$.
Suppose that $\protect{c_1(TM_k)(A) \geq 1}$,
and suppose that $A$ is represented by a $J$-holomorphic sphere
for some almost complex structure $J$
that is tamed by some blowup form on $M_k$.
Then
\begin{equation} \labell{geq deltak}
 \frac{1}{2\pi} \left< \Omega , A \right> \geq \delta_k .
\end{equation}
\end{Lemma}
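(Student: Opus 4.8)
The plan is to write $A=aL-b_1E_1-\cdots-b_kE_k$ with $a,b_1,\ldots,b_k\in\Z$ and to split into two cases according to whether or not $A$ is a scalar multiple of one of the classes
$$ L,\quad E_1,\quad\ldots,\quad E_k,\quad L-E_1,\quad\ldots,\quad L-E_k. $$
Suppose first that $A=\gamma C$ for some $C$ in this list. Then $\gamma$ is a positive integer: it is an integer because $C$ is a primitive lattice vector and $A$ is integral, and it is positive because $c_1(TM_k)(C)>0$ while $c_1(TM_k)(A)\geq 1$. Hence $\frac{1}{2\pi}\left<\Omega,A\right>=\gamma\,\frac{1}{2\pi}\left<\Omega,C\right>\geq\frac{1}{2\pi}\left<\Omega,C\right>$, and it is enough to verify $\frac{1}{2\pi}\left<\Omega,C\right>\geq\delta_k$ in the subcases $C=L$ (where $\lambda\geq\delta_1\geq\delta_k$), $C=E_i$ (where $\delta_i\geq\delta_k$), and $C=L-E_i$ (where $\lambda-\delta_i\geq\lambda-\delta_1\geq\delta_2+\delta_3\geq\delta_k$); each follows from the reduced form of $v$. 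For $C$ among $L,E_1,\ldots,E_k,L-E_1,L-E_2$ one could instead quote Lemma~\ref{linearlemma}.

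Suppose now that $A$ is linearly independent of every class in the list above. All those classes have non-zero Gromov--Witten invariant by Lemma~\ref{gwlemma}, so, since $c_1(TM_k)(A)\geq 1$ and $A$ is represented by a $J$-holomorphic sphere for some $J$ tamed by a blowup form, Lemma~\ref{positivity of intersections corollary} yields
$$ a=A\cdot L\geq 0,\qquad b_i=A\cdot E_i\geq 0,\qquad a-b_i=A\cdot(L-E_i)\geq 0\quad(1\leq i\leq k). $$
With $c_1(TM_k)(A)=3a-(b_1+\cdots+b_k)\geq 1$ these force $a\geq 1$ (if $a=0$ then all $b_i=0$, giving $c_1(TM_k)(A)=0$). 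Using $a\geq 1$ and $\lambda\geq\delta_1+\delta_2+\delta_3$,
$$ \frac{1}{2\pi}\left<\Omega,A\right>=a\lambda-\sum_{i=1}^k b_i\delta_i\ \geq\ a(\delta_1+\delta_2+\delta_3)-\sum_{i=1}^k b_i\delta_i. $$
I would finish by summation by parts: setting $\delta_{k+1}:=0$ and $B_j:=b_1+\cdots+b_j$,
$$ a(\delta_1+\delta_2+\delta_3)-\sum_{i=1}^k b_i\delta_i\ =\ \sum_{j=1}^k(\delta_j-\delta_{j+1})\bigl(a\min(j,3)-B_j\bigr). $$
For $1\leq j\leq k-1$ each summand is $\geq 0$, since $\delta_j-\delta_{j+1}\geq 0$ and $a\min(j,3)-B_j\geq 0$ (from $b_i\leq a$ when $j\leq 2$, and from $B_j\leq B_k=3a-c_1(TM_k)(A)<3a$ when $j\geq 3$); and the $j=k$ summand equals $\delta_k\bigl(3a-B_k\bigr)=\delta_k\,c_1(TM_k)(A)\geq\delta_k$. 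Therefore $\frac{1}{2\pi}\left<\Omega,A\right>\geq\delta_k$.

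The step I expect to require the most care is the case division: positivity of intersections gives $A\cdot C\geq 0$ only when $A$ is \emph{not} a multiple of $C$, and since $(L-E_i)\cdot(L-E_i)=0$ the multiples of $L-E_i$ are not detected by a negative self-intersection, so they must genuinely be peeled off by hand before the inequalities $a-b_i\geq 0$ can be used. Beyond that, the only real idea is the summation-by-parts identity, arranged precisely so that the single unit guaranteed by $c_1(TM_k)(A)\geq 1$ is charged against the smallest size $\delta_k$, while all remaining slack is absorbed by the inequalities $\delta_1\geq\cdots\geq\delta_k$ and $b_i\leq a$.
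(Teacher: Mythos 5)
Your proof is correct, and it takes a genuinely different route from the one the paper gives in Section~\ref{sec:minimal}. There, the argument is split into three cases according to whether $\delta_1$ and $\delta_2$ exceed $\lambda_F/2$, and in each case $A$ is expanded in a different basis ($\{L,E_1,\ldots,E_k\}$, $\{F,E_1,\ldots,E_k\}$, or $\{F,B,E_{12},E_3,\ldots,E_k\}$) so that positivity of intersections makes all coefficients nonnegative and every class of positive area can be bounded below by a single threshold ($\lambda/3$ or $\lambda_F/2$); that version has the advantage of simultaneously locating the classes where the minimum is attained, which is what Theorem~\ref{thm:Emin} needs. Your single computation dispenses with the case analysis entirely. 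It is closer in spirit to McDuff's argument in Section~\ref{sec:mcduff}: the bound $0\leq b_i\leq a$, which you extract from positivity of intersections with $L-E_i$ (legitimately, after peeling off the multiples of $L-E_i$, whose self-intersection is $0$ and which therefore cannot be excluded by any sign argument), is obtained there arithmetically from the adjunction-type inequality $A\cdot A+c_1(TM_k)(A)\geq 0$; and your Abel summation $\sum_j(\delta_j-\delta_{j+1})\bigl(a\min(j,3)-B_j\bigr)$ is an algebraic repackaging of McDuff's regrouping of $a\lambda-\sum b_i\delta_i$ into $a$ blocks of the form $\lambda-(\delta_{j_{i1}}+\delta_{j_{i2}}+\delta_{j_{i3}})$ with distinct indices. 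Where you improve on Corollary~\ref{mcduffcor} is at the last step: instead of using the adjunction formula to produce an index $\ell$ with $b_\ell<a$ and then applying the nonnegativity estimate to $A-E_\ell$, you charge the final $\delta_k$ directly to the term $\delta_k\,c_1(TM_k)(A)\geq\delta_k$ in the $j=k$ summand, so no adjunction input is needed. I verified the summation-by-parts identity and the nonnegativity of each summand ($B_j\leq ja$ for $j\leq 2$ from $b_i\leq a$, and $B_j\leq B_k=3a-c_1(TM_k)(A)<3a$ for $j\geq 3$); the argument is complete.
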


\begin{Remark}
Equality in~\eqref{geq deltak} implies that $c_1(TM_k)(A)=1$;
we show this in our proof.
We note that, for a class $A$ of a $J$-holomorphic sphere, if $c_1(TM_k)(A)=1$
then either $A$ is an exceptional class or $A \cdot A \geq 0$;
this is a consequence of the adjunction formula.
\end{Remark}

\begin{Remark} \labell{rk:indecomposable}
In \cite{algorithm}, to count toric actions on blowups of $\CP^2$, 
we use the following ``indecomposability of minimal exceptional classes",
which follows from Lemma~\ref{deltak is minimal}:
if $\omega$ is a blowup form 
and $E$ is an exceptional homology class with minimal $\omega$-symplectic area,
then, for every $\omega$-tame almost complex structure $J$,
there exists an embedded $J$-holomorphic sphere in the class $E$.
This result was also obtained by Pinsonnault \cite{pinso}, 
for more general four-manifolds, using Seiberg-Witten-Taubes theory.
\end{Remark}

\begin{Lemma} \labell{deltak is minimal again}
Let $k \geq 3$.  Let $\omega$ be a blowup form 
whose cohomology class is encoded by a vector that is reduced.
Then, for every exceptional class $E$ in $H_2(M_k)$,
$$ \frac{1}{2\pi} \left< [\omega] , E \right> 
   \geq \frac{1}{2\pi} \left< [\omega] , E_k \right>.$$ 
Moreover, let $\Omega$ be a cohomology class in $H^2(M_k;\R)$
that is encoded by a vector $(\v)$ with positive entries that is reduced.
Then, for every exceptional class $E$ in $H_2(M_k)$,
$$ \frac{1}{2\pi} \left< \Omega , E \right> \geq \delta_k .$$
\end{Lemma}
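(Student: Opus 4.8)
The plan is to derive this lemma as a consequence of Lemma~\ref{deltak is minimal} (which handles general $J$-holomorphic spheres with $c_1 \geq 1$) by checking that every exceptional class satisfies the hypotheses of that lemma. First I would observe that the two assertions in the statement are really the same assertion: the first is the special case of the second in which $\Omega = [\omega]$ is the cohomology class of an actual blowup form, and $\frac{1}{2\pi}\langle [\omega], E_k\rangle = \delta_k$ by the definition of the encoding (Definition~\ref{encode}). So it suffices to prove the second, more general, statement.

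Now let $E$ be an exceptional class in $H_2(M_k)$. By Definition~\ref{new def exceptional} and the equivalence of conditions (a), (b), (c) in Lemma~\ref{calE and J}, we have $c_1(TM_k)(E) = 1$, and $E$ is represented by an embedded $\omega$-symplectic sphere for every blowup form $\omega$; in particular $E$ is represented by a $J$-holomorphic sphere for an almost complex structure $J$ tamed by some blowup form on $M_k$ (take $J$ to be any $\omega$-tame almost complex structure making that embedded symplectic sphere holomorphic, as provided by condition (a)). Thus $A := E$ satisfies both hypotheses of Lemma~\ref{deltak is minimal}: $c_1(TM_k)(A) = 1 \geq 1$, and $A$ is represented by such a $J$-holomorphic sphere. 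Applying Lemma~\ref{deltak is minimal} with this $A$ gives $\frac{1}{2\pi}\langle \Omega, E\rangle \geq \delta_k$, which is exactly the desired inequality.

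There is no real obstacle here — the content has all been packaged into Lemma~\ref{deltak is minimal} and the characterization of exceptional classes in Lemma~\ref{calE and J}. The only point that requires a word of care is that Lemma~\ref{deltak is minimal} is stated for a cohomology class $\Omega$ encoded by a reduced vector with positive entries, which is precisely the hypothesis we are assuming on $\Omega$ in the second assertion; and for the first assertion one should note that the cohomology class of a blowup form is automatically encoded by a vector in the forward positive cone with positive entries (Remark~\ref{poslemma}), and the hypothesis that this vector be reduced is part of the statement. Hence in both cases the hypotheses of Lemma~\ref{deltak is minimal} are met, and we are done.
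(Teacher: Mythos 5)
Your proposal is correct and matches the paper's own argument: the paper likewise deduces this lemma by applying Lemma~\ref{deltak is minimal} to $A=E$, verifying the hypotheses via Lemma~\ref{calE and J} (the paper cites $c_1(TM_k)(E)=1$ and $\GW(E)\neq 0$ to produce the $J$-holomorphic representative, whereas you invoke condition (a) directly; these are interchangeable). The reduction of the first assertion to the second via positivity of the entries is also as intended.
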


Lemma~\ref{deltak is minimal again} follows from Lemma~\ref{deltak is minimal}.
We prove these lemmas together with the next theorem, 
in which we identify the set $\calE^v_{\min}$ 
of exceptional classes of minimal area.
In the theorem we refer to the following cases.
If $(\v)$ is a vector with positive entries that is reduced,
then exactly one of the following cases occurs,
where
$$ \lambda_F := \lambda - \delta_1, \quad \text{and} \quad
   \delta_{E_{1\ell}} := \lambda - \delta_1 - \delta_\ell
\quad \text{ for } \ell \neq 1.  $$
\begin{enumerate}
\item
$\delta_1 \leq \lambda_F/2$ (equivalently, $\delta_1 \leq \lambda/3$),
 \ and  

\medskip

   \begin{enumerate}
   \item $\delta_k < \lambda/3$.
   \item $\delta_k = \lambda/3$.
   \end{enumerate}

\bigskip

\item
$\delta_1 > \lambda_F/2$, \ $\delta_2 \leq \lambda_F/2$, \ and

\medskip 

   \begin{enumerate}
   \item  $\delta_k < \lambda_F/2$.
   \item  $\delta_k = \lambda_F/2$.  
   \end{enumerate}

\bigskip

\item
$\delta_1 > \lambda_F/2$, \ $\delta_2 > \lambda_F/2$, \ and

\medskip

   \begin{enumerate}
   \item $\delta_k < \delta_{E_{12}}$.
   \item $\delta_k=\delta_{E_{12}}$.
   \end{enumerate}

\end{enumerate}

\begin{Remark} \labell{special v}
Let $k \geq 3$. Let $v = (\v)$ be a vector with positive entries
that is reduced. 
\begin{itemize}
\item
If $v$ is in case (1b),
then $v = (\lambda;\lambda/3,\ldots,\lambda/3)$ and $k \leq 8$.
\item
If $v$ is in case (2b),
then $v = (\lambda;\delta_1,\lambda_F/2,\ldots,\lambda_F/2)$ 
and $\delta_1 > \lambda_F/2$.
\item
If $v$ is in case (3b),
then $v = (\lambda;\delta_1,\delta_2,\delta_{E_{12}},\ldots,\delta_{E_{12}})$
and $\delta_2 > \delta_{E_{12}}$.
\end{itemize}
\end{Remark}

\begin{Theorem}[Exceptional classes with minimal area] \labell{thm:Emin}
Let $k \geq 3$.  
Let $v=(\v)$ be a vector with positive entries that is reduced;
let $\Omega$ be a cohomology class in $H^2(M_k;\R)$ that is encoded
by this vector.
Suppose also that $\Omega$ satisfies the volume inequality
\begin{equation} \labell{volume inequality again}
 \lambda^2 - \delta_1^2 - \dots - \delta_k^2 > 0.
\end{equation}

Let $j$ be the smallest non-negative integer for which
$ \delta_{j+1} = \ldots = \delta_k $.
\smallskip

\begin{itemize}
\item
If $v$ is in one of the cases (1a), (2a), or (3a), then
$$ \calE^v_{\min} = \left\{ E_{j+1} , \ldots , E_k \right\}.$$

\item
If $v$ is in case (1b), then $k\leq 8$, and, 
by Demazure \cite{demazure}, 
the exceptional classes are those classes 
that can be written as $aL -b_1 E_1 - \ldots - b_k E_k$ with 
$(a;b_1,\ldots,b_k)$ a multi-set of one of the following types: 
         $(0;-1,0^{k-1})$, $(1;1^2, 0^{k-2})$,
         $(2;1^5,0^{k-5})$, $(3; 2, 1^6, 0^{k-7})$, 
         $(4;2^3,1^5)$, $(5;2^6, 1^2)$, $(6;3,2^7)$. 
In this case,
$\calE^v_{\min}$ contains \emph{all} the exceptional classes.

\item
If $v$ is in case (2b), 
then 
$$ \calE^v_{\min} = \{ E_2, \ldots, E_k, E_{12}, \ldots, E_{1k} \}.$$

\item
If $v$ is in case (3b), then
$$ \calE^v_{\min} = \{ E_{12}, E_3, \ldots, E_k \} .$$

\end{itemize}

\end{Theorem}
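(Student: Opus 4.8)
The plan is to reduce the theorem to the linear-algebra statement of Lemma~\ref{linearlemma}, using positivity of intersections to control all exceptional classes by comparison with the short list of classes $\{L,E_1,\ldots,E_k,F,B,E_{12}\}$. First I would recall from Lemma~\ref{calE and J} and Definition~\ref{new def exceptional} that an exceptional class $E$ satisfies $E\cdot E=-1$, $c_1(TM_k)(E)=1$, $\GW(E)\neq 0$, and is represented by a $J$-holomorphic sphere for every $\omega$-tame $J$, so that Lemma~\ref{positivity of intersections corollary} applies: for any class $B'$ with $\GW(B')\neq 0$ that is not a multiple of $E$, we have $E\cdot B'\geq 0$. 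The key observation is that if $E=aL-b_1E_1-\cdots-b_kE_k$ is exceptional and we test it against the classes $L,E_1,\ldots,E_k$ and $L-E_i-E_j$ (all of which have nonzero Gromov--Witten invariant by Lemma~\ref{gwlemma}), the resulting inequalities force $a\geq 0$, $b_i\geq 0$, and $a\geq b_i+b_j$ for $i\neq j$; together with $c_1(E)=3a-\sum b_i=1$ and $E\cdot E=a^2-\sum b_i^2=-1$, this pins down exactly which $(a;b_1,\ldots,b_k)$ can occur. In the generic (non-case-b) situation one shows that the minimum of $\frac1{2\pi}\langle\Omega,E\rangle$ over exceptional classes is achieved only by classes $E$ that are multiples of one of the classes in~\eqref{list of classes}, and then Lemma~\ref{linearlemma} identifies these minimizers precisely as the $E_\ell$ with $\delta_\ell=\delta_k$ (plus possibly $E_{12}$).

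Concretely, the core reduction step is: writing $E=aL-\sum b_iE_i$ exceptional with all $b_i\geq 0$ and $a\geq b_i+b_j$ for distinct $i,j$, one bounds $\frac1{2\pi}\langle\Omega,E\rangle=a\lambda-\sum b_i\delta_i$ from below by a nonnegative combination of the quantities $\lambda,\delta_i,\lambda-\delta_i,\lambda-\delta_i-\delta_j$ (i.e.\ by the values of $\Omega$ on the test classes), using reducedness of $v$ to see that the coefficients can be arranged to be $\geq 0$. This is where the three-way case split enters: the way one expresses a general exceptional class as such a combination depends on the relative sizes of $\delta_1,\delta_2$ and $\lambda_F/2=(\lambda-\delta_1)/2$, which governs whether $L$, $F=L-E_1$, or $B=L-E_2$ is the right ``large'' class to peel off. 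In case (1), $\delta_1\leq\lambda/3$ makes $L$-components cheap and the minimizer must be among the smallest $E_\ell$'s; in case (2), $\delta_1>\lambda_F/2\geq\delta_2$ means a single $E_1$ is ``expensive'' and the competing minimal classes are the $E_{1\ell}=L-E_1-E_\ell$; in case (3), both $\delta_1,\delta_2>\lambda_F/2$ and $E_{12}=L-E_1-E_2$ becomes a genuine competitor, so the minimal set shrinks to $\{E_{12},E_3,\ldots,E_k\}$. In each subcase~(a) the relevant strict inequality ($\delta_k<\lambda/3$, $\delta_k<\lambda_F/2$, or $\delta_k<\delta_{E_{12}}$) guarantees that only the already-identified $E_\ell$'s attain the minimum, and the equality analysis from Lemma~\ref{linearlemma} closes the argument.

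For the three exceptional ``(b)'' subcases, Remark~\ref{special v} pins the vector down to a one-parameter family. In case (1b), $v=(\lambda;\lambda/3,\ldots,\lambda/3)$ with $k\leq 8$ by Remark~\ref{poslemma}, so $\Omega$ is proportional to $c_1(TM_k)$ and $\frac1{2\pi}\langle\Omega,E\rangle=\lambda/3\cdot c_1(E)=\lambda/3$ for \emph{every} exceptional $E$ (since $c_1(E)=1$); thus $\calE^v_{\min}$ is literally the full set of exceptional classes, and invoking Demazure~\cite{demazure} (or the del Pezzo classification) gives the explicit list of orbit types $(a;b_1,\ldots,b_k)$ displayed. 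In cases (2b) and (3b) one has $\delta_k=\lambda_F/2$ respectively $\delta_k=\delta_{E_{12}}$, so the classes $E_{1\ell}$ (resp.\ $E_{12}$) join the $E_\ell$'s at the same minimal value; one must check both that these classes really are exceptional (they are $L-E_1-E_\ell$ with distinct indices, hence exceptional by Remark~\ref{expexp}) and that nothing else ties, which again follows from the lower-bound computation and equality analysis. Finally, Lemma~\ref{deltak is minimal} is the special case $A=E$ of the lower bound (noting that equality forces $c_1(A)=1$, hence $A$ exceptional or $A\cdot A\geq 0$, and the latter is excluded for the extremal value), and Lemma~\ref{deltak is minimal again} is its restatement; Lemma~\ref{linearlemma} feeds directly into the generic case. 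I expect the main obstacle to be the bookkeeping in the case analysis: verifying in each of cases (1)--(3) that the lower bound $a\lambda-\sum b_i\delta_i\geq\delta_k$ (resp.\ $\geq\min\{\ldots\}$) really does hold for \emph{all} exceptional classes, with a clean description of the equality locus, since one must rule out infinitely many competitor classes using only finitely many test classes plus the constraints $c_1(E)=1$, $E\cdot E=-1$, $b_i\geq0$, $a\geq b_i+b_j$ — the argument that these constraints force $E$ (or a positive multiple of a single test class) to realize any minimum is the technical heart.
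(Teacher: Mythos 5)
Your proposal follows essentially the same route as the paper's first proof in Section~\ref{sec:minimal}: the three-way case split on $\delta_1,\delta_2$ versus $\lambda_F/2$, positivity of intersections against the Gromov--Witten--nonzero test classes to force nonnegative coefficients in the basis adapted to each case, the chain of inequalities bounding $\frac{1}{2\pi}\langle\Omega,A\rangle$ below by $\delta_k$ with equality analysis, and Lemma~\ref{linearlemma} to dispose of multiples of the test classes. Your observation that in case (1b) one has $\frac{1}{2\pi}\Omega=\frac{\lambda}{3}c_1(TM_k)$, so every exceptional class automatically has area $\lambda/3$, is a slightly cleaner way to phrase that subcase, but the argument is otherwise the paper's.
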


\begin{Remark} \labell{rk:Emin}
Combining Theorem~\ref{thm:Emin}
with the algorithm of Section~\ref{sec:existence},
we obtain the list of exceptional classes 
with minimal $\omega$-area in $H_2(M_k)$
for \emph{any} blowup form $\omega$ on $M_k$, even if its cohomology class
is not represented by a reduced vector.
Indeed, let $v$ be the vector that encodes the cohomology class $[\omega]$.
The algorithm of paragraph~\ref{r:algorithm} 
and Definition~\ref{standard c move} of the standard Cremona move
give maps $\gamma_1,$ $\ldots$, $\gamma_N$ of $\R^{1+k}$
such that each $\gamma_i$ is a permutation of the last $k$ entries
and such that
$v_{\red} := (\cremona \circ \gamma_N \circ \cdots 
 \circ \cremona \circ \gamma_1)(v)$ is reduced.
Let $\ol{\cremona}$, $\ol{\gamma_1}$, $\ldots$, $\ol{\gamma_N}$
be the transformations of $H_2(M_k)$
such that, for every homology class $A$,
identifying every vector $v' \in \R^{1+k}$
with the cohomology class in $H^2(M_k;\R)$ that it encodes, we have 
$\langle \cremona(v') , A \rangle = \langle v', \ol{\cremona}(A) \rangle$
and
$\langle \gamma_i(v') , A \rangle = \langle v', \ol{\gamma_i}(A) \rangle$
for $i=1,\ldots,N$.
Then $\calE_{\min}^v = 
    (\ol{\gamma}_1 \circ \ol{\cremona} \circ \cdots \circ
     \ol{\gamma}_N \circ \ol{\cremona}) \calE_{\min}^{v_{\red}}$.
\end{Remark}

\begin{Remark}[Exceptional classes with minimal area when $k=0,1,2$.]\ 
\labell{min E for small k}

If $k=0$, there are no exceptional classes, so $\calE^v_{\min} = \emptyset$.

If $k=1$, then $\calE^v_{\min} = \{ E_1 \}$.  In fact,
in this case $E_1$ is the only exceptional class.
Indeed, suppose that $E  = aL - b_1 E_1 \in H_2(M_1)$ is exceptional.
The equality $E \cdot E = -1$ gives $a^2 - {b_1} ^2 = -1$,
and the equality $c_1(TM_k)(E)=1$ (see Lemma~\ref{calE and J})
gives $3a - b_1 = 1$.
Because $a$ and $b_1$ are integers, 
we deduce that $a=0$ and $b_1=-1$, i.e., $E=E_1$.

Suppose now that $k=2$.
Then, by Demazure \cite{demazure}, 
the set of exceptional classes is $\{E_1, E_2, E_{12} \}$.
We have $\delta_2 < \lambda_F/2$ exactly if $\delta_2 < \delta_{E_{12}}$
and $\delta_2 = \lambda_F/2$ exactly if $\delta_2 = \delta_{E_{12}}$.

\begin{itemize}
\item 
If $\delta_2 < \lambda_F/2$ and $\delta_2 < \delta_1$,
then $\calE^v_{\min} = \{ E_2 \}$.
\item
If $\delta_2 < \lambda_F/2$ and $\delta_2 = \delta_1$,
then $\calE^v_{\min} = \{ E_1, E_2 \}$.
\item
If $\delta_2 = \lambda_F/2$ and $\delta_2 < \delta_1$,
then $\calE^v_{\min} = \{ E_2, E_{12} \}$.
\item
If $\delta_2 = \lambda_F/2$ and $\delta_2 = \delta_1$,
then $\calE^v_{\min} = \{ E_1, E_2, E_{12} \}$.
\item
If $\delta_2 > \lambda_F/2$,
then $\calE^v_{\min} = \{ E_{12} \}$.
\end{itemize}

\end{Remark}

\begin{proof}[Proof of Lemmas~\ref{deltak is minimal} 
and~\ref{deltak is minimal again} and Theorem \ref{thm:Emin}]

Lemma \ref{deltak is minimal again}
follows from Lemma~\ref{deltak is minimal}:
because $c_1(TM_k)(E) = 1$ and $\GW(E) \neq 0$
(by Lemma~\ref{calE and J} and Definition~\ref{new def exceptional}),
we can apply Lemma~\ref{deltak is minimal} to $E$.

Because $v$ is reduced (see also Remark~\ref{special v}),
in each of the cases in Theorem~\ref{thm:Emin}, 
each of the listed classes is exceptional and has size $\delta_k$.

It remains to prove the following results.
Let $A$ be a homology class in $H_2(M_k)$.
Suppose that $c_1(TM_k)(A) \geq 1$,
and suppose that $A$ is represented by a $J$-holomorphic sphere
for some almost complex structure $J$ that is tamed by some
blowup form on $M_k$.  
Then 
$\frac{1}{2\pi} \left< \Omega , A \right> \geq \delta_k$.
If, moreover, $A$ is an exceptional class with minimal area 
and $v$ satisfies the volume inequality~\eqref{volume inequality again},
then $A$ is one of the classes
that are listed in Theorem~\ref{thm:Emin}, according to the case of $v$.

\smallskip\noindent\textbf{Case 1:
when $\mathbf{\delta_1 \leq \lambda_F/2}$; 
equivalently, $\mathbf{\delta_1 \leq \lambda/3}$. \ }

{
First,
suppose that $A$ is not a multiple of any of the classes $ L,E_1,\ldots,E_k $.
Write
$$A = aL - b_1 E_1 - \ldots - b_k E_k.$$
By Lemma~\ref{gwlemma}, $\GW(L),\GW(E_1),\ldots,\GW(E_k)$ are nonzero;
by Lemma~\ref{positivity of intersections corollary}
and by the assumptions on $A$, the coefficients
$$ a = A \cdot L, \quad b_1 = A \cdot E_1, \quad \ \ldots, \quad
       b_k = A \cdot E_k $$
are nonnegative.
We have
$$ c_1(TM_k)(A) = 3a - b_1 - \ldots - b_k; $$
by assumption, this number is $\geq 1$.
Also in this case, $0 < \delta_i \leq \lambda / 3$ for $i = 1, \ldots, k$.
Thus,
\begin{align}
 \frac{1}{2\pi} \left< \Omega , A \right>
           & = a \lambda - b_1 \delta_1 - \ldots - b_k \delta_k \nonumber \\
 & \geq a \lambda - b_1 \lambda/3 - \ldots - b_k \lambda/3 \nonumber \\
 & = \left( 3a - b_1 - \ldots - b_k \right) \lambda/3 \nonumber \\
 & \stackrel{(\star)}{\geq} \lambda/3 \nonumber \\
 & \geq \delta_1 \nonumber \\
 & \stackrel{(\star\star)}{\geq} \delta_k.      \labell{inequalities1}
\end{align}
(Moreover, equality in~$(\star)$ implies that $c_1(TM_k)(A) = 1$.)

\smallskip

Suppose moreover that $A$ is an exceptional class with minimal area.
The last inequality of~\eqref{inequalities1} being an equality
implies that we are in case (1b).
The class $A$ is then in the set of listed classes because this set
contains \emph{all} the exceptional classes. 

\smallskip

Now, suppose that $A$ is a multiple of one of the classes $ L,E_1,\ldots,E_k $.
Then $\frac{1}{2\pi} \left< \Omega , A \right> \geq \delta_k$,
with equality only if $A$ is one of the classes $ E_{j+1} , \ldots , E_k $,
as in Lemma~\ref{linearlemma}.
These classes are among those that are listed
in all the cases, and in particular in the cases (1a) and (1b).

\medskip

}

\smallskip\noindent\textbf{Case 2:
when $\mathbf{\delta_1 > \lambda_F/2}$
and $\mathbf{\delta_2 \leq \lambda_F/2}$. \ }

First, suppose that $A$ is not a multiple of any of the classes
$ F, E_1, E_2, \ldots, E_k $.
Write
$$A = a_L L + a_F F - b_2 E_2 - \ldots - b_k E_k.$$
By Lemma~\ref{gwlemma},
$\GW(F)$, $\GW(E_1)$, $\GW(E_2)$, $\ldots$, $\GW(E_k)$ are nonzero;
by Lemma~\ref{positivity of intersections corollary} 
and by the assumptions on $A$, the coefficients
$$ a_L = A \cdot F, \quad a_F = A \cdot E_1, \quad
   b_2 = A \cdot E_2, \quad \ldots, \quad b_k = A \cdot E_k $$
are nonnegative.
We have
$$ c_1(TM_k)(A) = 3 a_L + 2 a_F - b_2 - \ldots - b_k ;$$
by assumption, this number is $\geq 1$.
The assumption $\delta_1 >  \lambda_F /2$ 
implies that $\lambda >  \frac{3}{2} \lambda_F$.
Also,
$0 < \delta_j \leq \lambda_F/2$ for all $2 \leq j \leq k$.
Thus,
\begin{align}
\frac{1}{2\pi} \left< \Omega , A \right>
    & = a_L \lambda + a_F \lambda_F - b_2 \delta_2
                 - \ldots - b_k \delta_k             \nonumber \\
 & \geq a_L \frac{3}{2} \lambda_F + a_F \lambda_F
         - b_2 \lambda_F/2 - \ldots - b_k \lambda_F/2 \nonumber \\
 & = \left( 3 a_L + 2 a_F - b_2 - \ldots - b_k \right) \lambda_F/2 \nonumber \\
 & \stackrel{(\star)}{\geq} \lambda_F/2 \nonumber \\
 & \geq \delta_2 \nonumber \\
 & \geq \delta_k. \labell{inequalities2}
\end{align}
(Moreover, equality in~$(\star)$ implies that $c_1(TM_k)(A) = 1$.)

\smallskip

Suppose moreover that $A$ is an exceptional class of minimal area.
The first inequality in~\eqref{inequalities2} being an equality
implies that the coefficient $a_L$ is zero. 
So $-b_2^2 - \ldots - b_k^2 = A \cdot A = -1$
and $2a_F - b_2 - \ldots - b_k = c_1(TM_k)(A) = 1$.
From this we deduce that $A$ is one of the classes $E_{12},\ldots,E_{1k}$.
The last two inequalities of~\eqref{inequalities2} being equalities
implies that we are in case (2b), so $A$ is among the listed classes.

\smallskip

Now suppose that $A$ is a multiple of one of the classes
$ F, E_1, E_2, \ldots, E_k $.
Then $\frac{1}{2\pi} \left< \Omega , A \right> \geq \delta_k$,
with equality only if $A$ is one of the classes
$E_{j+1},\ldots,E_k$, as in Lemma~\ref{linearlemma}.
These classes are among those that are listed
in all the cases, and in particular in the cases (2a) and (2b).

\medskip

\smallskip\noindent\textbf{Case 3:
when $\mathbf{\delta_1 > \lambda_F/2}$
and $\mathbf{\delta_2 > \lambda_F/2}$. \ }

First, suppose that $A$ is not a multiple of any of the classes
$ F, B, E_{12}, E_3, \ldots, E_k $.
Write $$A = a_B B + a_F F - b_{12} E_{12} - b_3 E_3 - \ldots - b_k E_k.$$
By Lemma~\ref{gwlemma},
$\GW(F)$, $\GW(B)$, $\GW(E_{12})$, $\GW(E_1)$,$\ldots$, $\GW(E_k)$ are nonzero;
by Lemma~\ref{positivity of intersections corollary},
the coefficients
$$ a_B = A \cdot F, \quad a_F = A \cdot B, \quad
   b_{12} = A \cdot E_{12}, \quad
   b_3=A \cdot E_3, \quad \ldots, \quad b_k=A \cdot E_k$$
are nonnegative. We have
$$ c_1(TM)(A) = 2 a_B + 2 a_F - b_{12} - b_3 - \ldots - b_k ;$$
by assumption, this number is $\geq 1$.
Let $$\lambda_B = \lambda - \delta_2.$$
The assumption $\delta_1 \geq \delta_2$
implies that $\lambda_B \geq \lambda_F$.
The assumption $\delta_2 >  \lambda_F / 2$
implies that $\delta_{E_{12}} < \lambda_F / 2$.
Because $(\v)$ is reduced, 
$\delta_k \leq \delta_3 \leq \lambda_F - \delta_2$,
which also implies that $0 < \delta_j < \lambda_F/2$ for $j=3,\ldots,k$.
Thus, 
\begin{align}
 \frac{1}{2\pi} \left< \Omega , A \right> & = a_B \lambda_B + a_F \lambda_F
            - b_{12} \delta_{E_{12}} - b_3 \delta_3 - \ldots - b_k \lambda_k
                                                     \nonumber \\
 & \geq a_B \lambda_F + a_F \lambda_F
         - b_{12} \lambda_F/2
         - b_3 \lambda_F/2 - \ldots - b_k \lambda_F/2 \nonumber \\
 & = \left( 2 a_B + 2 a_F - b_{12} - b_3 - \ldots - b_k \right) \lambda_F/2
                                                         \nonumber \\
 & \stackrel{(\star)}{\geq} \lambda_F/2  \nonumber \\
 & > \delta_3 \nonumber\\
 & \geq \delta_k.                         \labell{inequalities3}
\end{align}
(Moreover, equality in~$(\star)$ implies that $c_1(TM_k)(A) = 1$.)

\smallskip

The first inequality in~\eqref{inequalities3} being an equality
implies that $b_{12} = b_3 = \ldots = b_k =0$,
which cannot occur when $A$ is exceptional.

\smallskip

Now, suppose that $A$ is a multiple of one of the classes
$ F, B, E_{12}, E_3, \ldots, E_k $.
Then $\frac{1}{2\pi} \left< \Omega , A \right> \geq \delta_k$.
In case (3a), equality holds only if $A \in \{ E_{j+1} , \ldots , E_k \}$.
In case (3b), equality holds only if $A \in \{ E_{12}, E_{j+1}, \ldots, E_k \}$.
See Lemma~\ref{linearlemma}.
In each of these cases, $A$ belongs to the set of listed classes.

\end{proof}

{
\begin{Corollary} \labell{ABCD}
Let $k \geq 3$.  
Let $\omega$ be a blowup form on $M_k$
whose cohomology class is encoded by a reduced vector $v=(\v)$.
Then one of the following four possibilities (A), (B), (C), (D)
occurs for the set $\calE^v_{\min}$ of exceptional classes 
with minimal area.\smallskip

\begin{itemize}
\item[(A)]
\ \hfill
$\displaystyle{ 
  \calE^v_{\min} \supseteq \{ E_1, E_2, \ldots, E_k, E_{12} \}.}$
\hfill \ \\
In this case, $v = (\lambda, \lambda/3, \ldots, \lambda/3)$. \bigskip

\item[(B)]
\ \hfill
$\displaystyle{
   \calE^v_{\min} = \{ E_2, \ldots, E_k, E_{12}, \ldots, E_{1k} \}.}$
\hfill \ \\
In this case,
$v = (\lambda;\delta_1,\lambda_F/2,\ldots,\lambda_F/2)$,
and $\delta_1 > \lambda_F/2$. \bigskip

\item[(C)]
\ \hfill $\displaystyle{
 \calE^v_{\min} = \{ E_{12}, E_3, \ldots, E_k \} .}$ \hfill \ \\
In this case, $v = (\lambda;\delta_1,\delta_2,\delta_{E_{12}},
\ldots,\delta_{E_{12}})$ and $\delta_2 > \delta_{E_{12}}$. \bigskip

\item[(D)]
\ \hfill $\displaystyle{
 \calE^v_{\min} = \left\{ E_{j+1} , \ldots , E_k \right\},}$ \hfill \ \\
where $j$ is the smallest non-negative integer for which
$\delta_{j+1} = \ldots = \delta_k $.
\end{itemize}
\end{Corollary}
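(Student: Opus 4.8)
The plan is to obtain Corollary~\ref{ABCD} as a direct consequence of Theorem~\ref{thm:Emin}, repackaging the six cases (1a)--(3b) of that theorem into the four possibilities (A)--(D). The first step is to check that the hypotheses of Theorem~\ref{thm:Emin} are met: since $\omega$ is a blowup form, the vector $v = (\v)$ encoding $[\omega]$ has positive entries (by the definition of a blowup form) and satisfies the volume inequality $\lambda^2 - \delta_1^2 - \ldots - \delta_k^2 > 0$ (Remark~\ref{poslemma}); by hypothesis it is also reduced. Hence exactly one of the cases (1a), (1b), (2a), (2b), (3a), (3b) holds, by the case division stated just before Theorem~\ref{thm:Emin}.

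Next I would match each case to one of (A)--(D). If $v$ lies in (1a), (2a), or (3a), the first bullet of Theorem~\ref{thm:Emin} gives $\calE^v_{\min} = \{E_{j+1},\ldots,E_k\}$ with $j$ as defined there, which is exactly possibility (D). If $v$ lies in (2b), the third bullet gives $\calE^v_{\min} = \{E_2,\ldots,E_k,E_{12},\ldots,E_{1k}\}$, and Remark~\ref{special v} identifies the shape of $v$ as $(\lambda;\delta_1,\lambda_F/2,\ldots,\lambda_F/2)$ with $\delta_1 > \lambda_F/2$; this is (B). If $v$ lies in (3b), the fourth bullet gives $\calE^v_{\min} = \{E_{12},E_3,\ldots,E_k\}$, and Remark~\ref{special v} gives $v = (\lambda;\delta_1,\delta_2,\delta_{E_{12}},\ldots,\delta_{E_{12}})$ with $\delta_2 > \delta_{E_{12}}$; this is (C). Finally, if $v$ lies in (1b), then $v = (\lambda;\lambda/3,\ldots,\lambda/3)$ (and $k \le 8$) and $\calE^v_{\min}$ contains every exceptional class; recalling from Remark~\ref{expexp} that $E_1,\ldots,E_k$ and $L - E_1 - E_2 = E_{12}$ are all exceptional, we conclude $\calE^v_{\min} \supseteq \{E_1,\ldots,E_k,E_{12}\}$, which is (A).

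There is essentially no genuine difficulty here: the only content beyond quoting Theorem~\ref{thm:Emin} is the observation in case (1b) that the particular classes listed in (A) are indeed exceptional, which is immediate from Remark~\ref{expexp}. If I had to point at the one place requiring care, it is making sure the six cases of Theorem~\ref{thm:Emin} are genuinely exhaustive for \emph{every} blowup form — which, as noted, reduces to the automatic positivity and volume inequality of Remark~\ref{poslemma} — so that (A)--(D) really do cover all blowup forms and the corollary states a complete case division rather than merely a list of possibilities.
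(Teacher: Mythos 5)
Your proposal is correct and is exactly the intended derivation: the paper offers no separate proof of Corollary~\ref{ABCD}, presenting it as an immediate repackaging of the six cases of Theorem~\ref{thm:Emin} (together with Remark~\ref{special v} for the shape of $v$ and Remark~\ref{poslemma} for the hypotheses). Your case-matching (1a)/(2a)/(3a) $\to$ (D), (1b) $\to$ (A), (2b) $\to$ (B), (3b) $\to$ (C) is precisely what the paper relies on.
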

}

\section{McDuff's arguments}
\labell{sec:mcduff}

In Theorem~\ref{thm:Emin} we give the complete list
of exceptional homology classes with minimal symplectic area
for a blowup form whose cohomology class is encoded by a reduced vector.
McDuff has shown us a different proof approach,
which uses the ``reduced" assumption in such a beautiful way 
that we feel compelled to include it.

The following lemma and corollary are a slight variation
of results that were communicated to us by Dusa McDuff.
Their origin  is in  \cite[Lemma 3.4]{McHof}, 
which is attributed to \cite[Lemma 3.4]{Li-Li02}.

\begin{Lemma} \labell{mcdufflem}
Let $k \geq 3$.
Let $A$ be a homology class in $H_2(M_k)$.  Write
$$ A = aL - b_1 E_1 - \ldots - b_k E_k .$$

\begin{enumerate}
\item[(1)]
Suppose that $a \geq 0$ and $ b_{\ell} \geq 0 $ for all $\ell$,
and that
$$ A \cdot A + c_1(TM_k) (A) \geq 0 .$$
Then $0 \leq b_{\ell} \leq a$ for all $\ell$.
If, additionally, 
$$ A \cdot A \geq -1 \quad \text{ and } \quad A \neq 0,$$ 
then there exists $\ell \in \{ 1, \ldots, k \}$
such that $b_\ell < a$.

\item[(2)]
Suppose that $a \geq 0$ and $0 \leq b_{\ell} \leq a$ for all $\ell$,
and that 
$$c_1(TM_k)(A) \geq 0.$$
Let $\Omega$ be a cohomology class in $H^2(M_k;\R)$
that is encoded by a vector $(\v)$ with positive entries
and that is reduced.  Then
$$ \left< \Omega , A \right> \geq 0 .$$
\end{enumerate}
\end{Lemma}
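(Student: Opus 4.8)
The plan is to reduce both parts to elementary inequalities on the integer coefficients of $A$. Write $A = aL - b_1 E_1 - \dots - b_k E_k$ with $a, b_1, \dots, b_k \in \Z$, and recall that $A \cdot A = a^2 - \sum_i b_i^2$, that $c_1(TM_k)(A) = 3a - \sum_i b_i$, and that (by Definition~\ref{encode}) $\langle \Omega, A\rangle = 2\pi\bigl(a\lambda - \sum_i b_i\delta_i\bigr)$; in particular the conclusion of part~(2) is equivalent to $a\lambda \geq \sum_i b_i \delta_i$.

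For part~(1), I would rewrite the hypothesis $A\cdot A + c_1(TM_k)(A) \geq 0$ as
$$ a^2 + 3a \;\geq\; \sum_{i=1}^k b_i(b_i + 1), $$
and note that every summand on the right is nonnegative since $b_i \geq 0$. If some $b_\ell$ were strictly larger than $a$, then integrality would give $b_\ell \geq a+1$, whence $b_\ell(b_\ell+1) \geq (a+1)(a+2) = a^2 + 3a + 2$, already exceeding the left-hand side --- a contradiction. Hence $0 \leq b_\ell \leq a$ for all $\ell$. For the last assertion of~(1) I would argue by contradiction: if $b_\ell = a$ for every $\ell$, then $A\cdot A = (1-k)a^2 \leq -2a^2$ because $k \geq 3$, so $A\cdot A \geq -1$ forces $a = 0$; but then all $b_\ell = 0$ and $A = 0$, contradicting $A \neq 0$.

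For part~(2), I would bound $\sum_i b_i\delta_i$ using the properties of a reduced vector with positive entries --- $\delta_1 \geq \dots \geq \delta_k > 0$ and $\delta_1 + \delta_2 + \delta_3 \leq \lambda$ --- together with $0 \leq b_i \leq a$ and $\sum_i b_i \leq 3a$ (the latter being exactly $c_1(TM_k)(A) \geq 0$, and implying $3a - b_1 - b_2 - b_3 \geq 0$). Since $\delta_i \leq \delta_3$ and $b_i \geq 0$ for $i \geq 4$, and $\sum_{i=4}^k b_i \leq 3a - b_1 - b_2 - b_3$, I get
$$ \sum_{i=1}^k b_i\delta_i \;\leq\; \sum_{i=1}^3 b_i\delta_i + \delta_3\bigl(3a - b_1 - b_2 - b_3\bigr) \;=\; b_1(\delta_1 - \delta_3) + b_2(\delta_2 - \delta_3) + 3a\,\delta_3 . $$
Using $\delta_1, \delta_2 \geq \delta_3$ and $b_1, b_2 \leq a$, the right side is at most $a(\delta_1 - \delta_3) + a(\delta_2 - \delta_3) + 3a\,\delta_3 = a(\delta_1 + \delta_2 + \delta_3) \leq a\lambda$; dividing by $2\pi$ yields $\langle \Omega, A\rangle \geq 0$.

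I do not expect a serious obstacle: the lemma is essentially a repackaging of the defining inequalities of a reduced vector, with the role of $L - E_1$ and similar classes replaced by the coordinate comparison above. The only points that need a moment's attention are the use of integrality of the $b_i$ in part~(1) (so that ``$b_\ell > a$'' becomes ``$b_\ell \geq a+1$'') and checking that $3a - b_1 - b_2 - b_3 \geq 0$ in part~(2), so that multiplying it by $\delta_3 > 0$ keeps the inequality in the right direction.
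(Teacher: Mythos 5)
Your proposal is correct. Part (1) is essentially the paper's argument: the paper also assumes some $b_{\ell_0}=a+\eta$ with $\eta\geq 1$ and derives $b_{\ell_0}^2+b_{\ell_0}>a^2+3a$, forcing $A\cdot A + c_1(TM_k)(A)<0$; and it handles the last assertion exactly as you do, via $A\cdot A = a^2(1-k)\leq -2$ when all $b_\ell=a$ and $A\neq 0$. Part (2), however, takes a genuinely different route. The paper uses McDuff's combinatorial device: it writes $a\lambda-\sum_\ell b_\ell\delta_\ell$ as $a$ copies of $\lambda$ minus a list of $3a$ terms ($\delta_\ell$ repeated $b_\ell$ times, padded with $3a-\sum b_\ell$ zeros), and then distributes these $3a$ terms into $a$ triples so that the nonzero indices within each triple are distinct --- which is possible precisely because $0\leq b_\ell\leq a$. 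Each triple then contributes $\lambda-(\delta_{j_1}+\delta_{j_2}+\delta_{j_3})\geq 0$ by the reducedness inequality $\delta_1+\delta_2+\delta_3\leq\lambda$ together with monotonicity. Your argument replaces this with a direct majorization: push all of $\sum_{i\geq 4}b_i\delta_i$ up to $\delta_3(3a-b_1-b_2-b_3)$ using $c_1(TM_k)(A)\geq 0$, then use $b_1,b_2\leq a$ and $\delta_1,\delta_2\geq\delta_3$ to reach $a(\delta_1+\delta_2+\delta_3)\leq a\lambda$. Both proofs consume exactly the same hypotheses ($0\leq b_\ell\leq a$, $\sum b_\ell\leq 3a$, reducedness); yours is a shorter chain of elementary inequalities, while the paper's makes transparent the geometric meaning of $b_\ell\leq a$ (each exceptional divisor is hit at most once per ``line''). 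The only cosmetic slip is the phrase ``dividing by $2\pi$'': the inequality $a\lambda-\sum b_i\delta_i\geq 0$ is $\frac{1}{2\pi}\left<\Omega,A\right>\geq 0$, which of course gives the claim.
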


\begin{proof}[Proof of (1)]
Suppose otherwise.
Then $b_{\ell_0} = a + \eta$ for some $\ell_0$ and for some $\eta \geq 1$,
Then 
$$ b_{\ell_0}^2 + b_{\ell_0} = (a+\eta)^2 + (a+\eta)
 = a^2 
   + (\underbrace{ 2\eta+1 }\limits_{\geq 3}) a
   + \underbrace{\eta^2 + \eta}\limits_{> 0}
   > a^2 + 3a,$$ 
and so 
\begin{align*}
 A \cdot A + c_1(TM_k)(A) 
 & = ( a^2 - \sum b_{\ell}^2 ) + ( 3a - \sum b_{\ell} ) \\
 & = \underbrace{( a^2 + 3a ) - ( b_{\ell_0}^2 + b_{\ell_0} )}
     \limits_{<0}
   - \sum_{\ell \neq \ell_0} 
            ( \underbrace{ b_{\ell}^2 + b_{\ell} }\limits_{\geq 0}) \\
 & < 0,
\end{align*}
contradicting our assumption on $A$.
If there does not exist an $\ell$ such that $b_\ell < a$, 
then $b_\ell = a$ for all $\ell$, and $A \cdot A = a^2(1-k)$,
which is $\leq -2$ if $A \neq 0$.
\end{proof}

\begin{proof}[Proof of (2)]
The assumption $c_1(TM_k)(A) \geq 0$ implies that 
$3a - \sum b_{\ell} \geq 0$.
We can then write 
\begin{align} 
\nonumber
   \frac{1}{2\pi} \left< \Omega , A \right>
 &= a\lambda - b_1 \delta_1 - \ldots - b_k \delta_k \\
\labell{wewrite}
 &= \underbrace{\lambda + \ldots + \lambda}\limits_{\text{ $a$ times }}
 - (\underbrace{\delta_1+\ldots+\delta_1}\limits_{\text{ $b_1$ times }}
 + \ldots
 +  \underbrace{\delta_k+\ldots+\delta_k}\limits_{\text{ $b_k$ times }}
 +  \underbrace{0 + \ldots + 0}\limits_{\text{ $3a - \sum b_{\ell}$ times}}).
\end{align}
We set $\delta_{k+1}=0$.

We label the list of $3a$ indices
$$\underbrace{1,\ldots,1}\limits_{\text{ $b_1$ times }}, \quad 
\ldots, \quad
 \underbrace{k,\ldots,k}\limits_{\text{ $b_k$ times }} , \quad
 \underbrace{{k+1},\ldots,{k+1}}\limits_{\text{ $3a - \sum b_{\ell}$ times }} $$
as 
$$ {j_{11}},\ {j_{21}},\ \ldots, {j_{a1}}, \quad 
   {j_{12}},\ {j_{22}},\ \ldots, {j_{a2}}, \quad
   {j_{13}},\ {j_{23}},\ \ldots, {j_{a3}}.$$
Because $0 \leq b_{\ell} \leq a$ for all $\ell$,
for each $1 \leq i \leq a$
those of the three indices $j_{i1},j_{i2},j_{i3}$
that are different from the artificially-added index $k+1$ are distinct.
The right hand side of~\eqref{wewrite} then becomes
\begin{equation} \labell{rhs}
 \sum_{i=1}^a 
   ( \lambda - (\delta_{j_{i1}} + \delta_{j_{i2}} + \delta_{j_{i3}} ) ) 
\end{equation}
where at each summand, $(\delta_{j_{i1}} + \delta_{j_{i2}} + \delta_{j_{i3}})$ is the sum of at most three
of $\delta_1,\ldots,\delta_k$.  Because $(\v)$ is reduced, 
the sum \eqref{rhs} is $\geq 0$.
\end{proof}

\begin{Corollary} \labell{mcduffcor}
Let $k \geq 3$.
Let $\Omega$ be a cohomology class in $H^2(M_k;\R)$
that is encoded by a vector $(\v)$
with positive entries that is reduced.
Let $A$ be a class in $H_2(M_k)$ such that $c_1(TM_k) (A) \geq 1$
and such that $A$ is represented by a $J$ holomorphic sphere
for some almost complex structure $J$
that is tamed by some blowup form on $M_k$.  Then
$$\frac{1}{2\pi} \left< \Omega , A \right> \geq \delta_k.$$

In particular, let $E$ be an exceptional class in $H_2(M_k)$;
then 
$$\frac{1}{2\pi} \left< \Omega , E \right> \geq \delta_k.$$
Moreover,
\begin{itemize}
\item  if $E$ is not one of the classes $E_1,\ldots,E_k$
nor $L - E_1 - E_\ell$ for $\ell \neq 1$, then 
$$\frac{1}{2\pi} \left< \Omega , E \right> \geq \delta_1;$$
\item
if $E$ is not one of the classes $E_1,\ldots,E_k$, then
$$\frac{1}{2\pi} \left< \Omega , E \right> \geq \lambda-\delta_1-\delta_2.$$
\end{itemize}
\end{Corollary}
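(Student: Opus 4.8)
The plan is to prove the displayed inequality $\frac{1}{2\pi}\langle\Omega,A\rangle\ge\delta_k$ first, and then to obtain the two sharper bounds for exceptional classes by running the same mechanism against $E_1$ and $L-E_1-E_2$ in place of $E_k$. The engine is Lemma~\ref{mcdufflem}: part~(1) turns positivity of intersections and adjunction into the coefficient bounds $0\le b_\ell\le a$ for $A=aL-\sum b_\ell E_\ell$, and part~(2) turns such bounds into nonnegativity of $\langle\Omega,\cdot\rangle$ on a reduced class; the trick is to apply part~(2) not to $A$ itself but to a well-chosen translate $A-E$ with $E$ an exceptional class.

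Write $A=aL-b_1E_1-\dots-b_kE_k$. If $A$ is a multiple of $L$ or of some $E_j$, then it is a positive integer multiple of that class (since $c_1(TM_k)$ is positive on $L$ and on each $E_j$), and the bound follows at once from the reducedness of $(\v)$. Otherwise I would first reduce to the case that $A$ is the class of a \emph{simple} $J$-holomorphic sphere: a $J$-holomorphic sphere in class $A$ is a branched cover of a simple one \cite{nsmall}, so $A=mB$ with $m\ge1$ and $B$ the class of a simple $J$-holomorphic sphere, not proportional to $L$ or any $E_j$, with $c_1(TM_k)(B)\ge1$; the inequality for $B$ gives it for $A$ because $\langle\Omega,A\rangle=m\langle\Omega,B\rangle$ and $\langle\Omega,B\rangle\ge2\pi\delta_k>0$. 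Thus assume $A$ is itself such a class. Positivity of intersections (Lemma~\ref{positivity of intersections corollary}, using $\GW(L),\GW(E_1),\dots,\GW(E_k)\ne0$ by Lemma~\ref{gwlemma}) gives $a=A\cdot L\ge0$ and $b_\ell=A\cdot E_\ell\ge0$, while adjunction for a simple rational curve gives $A\cdot A\ge c_1(TM_k)(A)-2\ge-1$, hence $A\cdot A+c_1(TM_k)(A)\ge0$. By Lemma~\ref{mcdufflem}(1), $0\le b_\ell\le a$ for all $\ell$ and $b_{\ell_0}<a$ for some $\ell_0$. Apply Lemma~\ref{mcdufflem}(2) to $A-E_{\ell_0}$: its $L$-coefficient is $a\ge0$, its $E_\ell$-coefficients are $b_\ell\in[0,a]$ for $\ell\ne\ell_0$ and $b_{\ell_0}+1\le a$ for $\ell=\ell_0$, and $c_1(TM_k)(A-E_{\ell_0})=c_1(TM_k)(A)-1\ge0$; so $\langle\Omega,A-E_{\ell_0}\rangle\ge0$, i.e. $\frac{1}{2\pi}\langle\Omega,A\rangle\ge\delta_{\ell_0}\ge\delta_k$ by reducedness. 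Applying this with $A=E$ an exceptional class (which satisfies $c_1(TM_k)(E)=1$ and $\GW(E)\ne0$, hence is a $J$-holomorphic sphere class for generic $J$ tamed by a blowup form) gives the ``in particular'' statement.

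For the refinements, let $E=aL-\sum b_\ell E_\ell$ be exceptional and not one of $E_1,\dots,E_k$; then $E$ is proportional to neither $L$ nor any $E_j$ (using $c_1(TM_k)(E)=1$), so $0\le b_\ell\le a$ as above, and $a\ge1$ since $a=0$ would force $E=E_j$. The relations $a^2-\sum b_\ell^2=-1$ and $3a-\sum b_\ell=1$ control the $b_\ell$: if $b_1=a$ they force $a=1$ and $E=L-E_1-E_\ell$ for some $\ell\ne1$, so if $E$ is also not of that form then $b_1<a$, and Lemma~\ref{mcdufflem}(2) applied to $E-E_1$ (whose $E_1$-coefficient is $b_1+1\le a$, with $c_1(TM_k)(E-E_1)=0$) gives $\frac{1}{2\pi}\langle\Omega,E\rangle\ge\delta_1$. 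Likewise, the same relations with $a\ge1$ force two distinct indices $i,\ell$ with $b_i,b_\ell\ge1$ and $b_m\le a-1$ for $m\notin\{i,\ell\}$ (when at most one $b_m$ equals $a$; when two do, $a=1$ and $E=L-E_i-E_\ell$ outright); then Lemma~\ref{mcdufflem}(2) applied to $E-(L-E_i-E_\ell)$ --- whose $L$-coefficient is $a-1\ge0$, all of whose $E_m$-coefficients lie in $[0,a-1]$, and whose $c_1(TM_k)$-value is $0$ --- gives $\frac{1}{2\pi}\langle\Omega,E\rangle\ge\lambda-\delta_i-\delta_\ell\ge\lambda-\delta_1-\delta_2$, the last step because $\delta_i+\delta_\ell\le\delta_1+\delta_2$ for distinct $i,\ell$ when $(\v)$ is reduced.

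I expect the main obstacle to be the passage from what Lemma~\ref{mcdufflem}(2) literally yields, $\langle\Omega,A\rangle\ge0$, to the sharp bounds: this requires subtracting off exactly the right exceptional class ($E_{\ell_0}$, $E_1$, or $L-E_1-E_2$) and verifying that the shifted coefficients still obey the hypotheses of Lemma~\ref{mcdufflem}(2), which is where the reducedness of $(\v)$ and the Diophantine structure of exceptional classes come in, and where the correct index or pair of indices must be pinned down. The reduction to simple spheres --- and the separate (trivial) handling of multiples of $L$ or of an $E_j$ --- is routine but genuinely necessary, because the adjunction input to Lemma~\ref{mcdufflem}(1) fails for multiple covers of $(-1)$-spheres.
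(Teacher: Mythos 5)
Your proposal is correct and follows essentially the same route as the paper: positivity of intersections and the adjunction inequality feed Lemma~\ref{mcdufflem}(1), and each bound is obtained by applying Lemma~\ref{mcdufflem}(2) to $A$ minus a suitably chosen exceptional class ($E_{\ell_0}$, $E_1$, or $L-E_i-E_s$). Your explicit reduction to simple spheres before invoking adjunction, and your direct subtraction of $E_1$ for the $\geq\delta_1$ bound (where the paper instead subtracts $L-E_i-E_s$ with $i,s\neq 1$ and uses reducedness), are minor refinements of the same argument.
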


\bigskip

\begin{proof}
Assume that $A$ is not 
a multiple of any of the classes $L,E_1,\ldots,E_k$;
otherwise, the result is clearly true.
By positivity of intersections 
(Lemma~\ref{positivity of intersections corollary})
we can write
$$ E = aL - b_1 E_1 - \ldots - b_k E_k $$
where $a,b_1,\ldots,b_k$ are nonnegative.
By the adjunction formula,
\begin{align*}
 A \cdot A & \geq c_1(TM_k)(A) - 2  \\
           & \geq -1.
\end{align*}
By part (1) of Lemma~\ref{mcdufflem},
we have $0 \leq b_{\ell} \leq a$ for all $\ell$,
and there exists an $\ell$ such that $b_{\ell} < a$.
We can then apply part (2) of Lemma~\ref{mcdufflem}
to $A - E_\ell$ and conclude that
$$ \frac{1}{2\pi} \left< \Omega , A \right> \geq \delta_\ell .$$

Now, let $E$ be an exceptional class in $H_2(M_k)$
that is not one of the classes $E_1,\ldots,E_k$.
We need to show that
$$ \frac{1}{2\pi} \left< \Omega , E \right> 
    \geq \lambda - \delta_1 - \delta_2 ,$$
and that, if $E$ is not equal to $L - E_1 - E_\ell$
for any $\ell \neq 1$, then
$$ \frac{1}{2\pi} \left< \Omega , E \right> \geq \delta_1.$$

Because $E$ is not one of the classes $E_1,\ldots,E_k$
and is exceptional, $E$ is not a multiple of any of the classes
$L,E_1,\ldots,E_k$,
and by positivity of intersections 
(Lemma~\ref{positivity of intersections corollary})
we can write
$$ E = aL - b_1 E_1 - \ldots - b_k E_k $$
where $a,b_1,\ldots,b_k$ are nonnegative.
By part (1) of Lemma~\ref{mcdufflem},
we have $0 \leq b_{\ell} \leq a$ for all $\ell$.

First, suppose that $b_i = a$ for some $1 \leq i \leq k$.
Then properties $E \cdot E = -1$ and $c_1(TM_k)(E) = 1$
then imply that $E = L - E_i - E_s$ for some $s \neq i$.
Similarly, if $a=1$, then again $E=L-E_{i}-E_{s}$ for $s \neq i$. 
In all these cases
$$ \frac{1}{2\pi} \left< \Omega , E \right> 
   \geq \min_{1\leq i < s \leq k} \{\lambda-\delta_i-\delta_s \}
 = \lambda-\delta_1-\delta_2,$$
and
if $i$ and $s$ are both different from $1$, then 
$$ \frac{1}{2\pi} \left< \Omega , E \right>
 = \lambda - \delta_i - \delta_s \geq \delta_1 $$
because $(\v)$ is reduced.

It remains to consider the case that $a>1$ and $0 \leq b_\ell < a$
for all $\ell$.   Because $E \cdot E=-1$,
there exist two different indices $i, s$ 
such that $b_i>0$ and $b_s>0$. 
We can then apply part~(2) of Lemma~\ref{mcdufflem}
to $ A := E - ( L - E_{i} - E_{s} ) $ and conclude that 
\begin{equation} \labell{geq lambdaEis}
 \frac{1}{2\pi} \left< \Omega , E \right> 
 \geq \lambda - \delta_i - \delta_s .
\end{equation}
Because $(\v)$ is reduced, the right hand side of~\eqref{geq lambdaEis}
is $\geq \lambda - \delta_1 - \delta_2$,
and it is $\geq \delta_1$ if $i$ and $s$ are both different from $1$.
\end{proof} \bigskip 

We now give an alternative proof to Theorem \ref{thm:Emin}, 
using Corollary \ref{mcduffcor}.

\begin{proof}[Proof of Lemmas 
\ref{deltak is minimal} and
\ref{deltak is minimal again}
and Theorem \ref{thm:Emin}]

Lemmas~\ref{deltak is minimal} and~\ref{deltak is minimal again} 
follow from Corollary \ref{mcduffcor}. 

Because $v$ is reduced (see also Remark~\ref{special v}),
in each of the cases in Theorem~\ref{thm:Emin}, 
each of the listed classes is exceptional and has size $\delta_k$.

Now, let $E$ be an exceptional class in $H_2(M_k)$.
By Corollary~\ref{mcduffcor}, $E$ is in $\calE^v_{\min}$
if and only if $\frac{1}{2\pi} \left< \Omega , E \right> = \delta_k$.
If $E$ is one of the classes $E_1,\ldots,E_k$,
then $\frac{1}{2\pi} \left< \Omega , E \right> = \delta_k$
implies that $E$ is in the set $\{ E_{j+1} , \ldots , E_k \}$,
which is contained in all the sets of classes that are listed
in Theorem~\ref{thm:Emin}.  

We now assume that $\frac{1}{2\pi} \left< \Omega , E \right> = \delta_k$
and $E$ is not one of the classes $E_1, \ldots, E_k$.
It remains to prove that $E$ is one of the classes that are listed
in Theorem~\ref{thm:Emin}, according to the case of $v$. 

\smallskip\noindent\textbf{Case 1:
when $\mathbf{\delta_1 \leq \lambda/3}$. \ }

\begin{align}
\frac{1}{2\pi} \left< \Omega , E \right> 
  & \geq    \lambda-\delta_1-\delta_2 
\quad \text{ by Corollary~\ref{mcduffcor} } \nonumber\\
  & \geq \lambda/3 \nonumber\\
  & \geq \delta_1 \nonumber \\
  & \geq \delta_k. \nonumber 
\end{align}
Equality implies that we are in case (1b);
the class $E$ is then in the set of listed classes
because this set contains \emph{all} the exceptional classes.

\smallskip\noindent\textbf{Cases 2 and 3:
when $\mathbf{\delta_1 > \lambda/3}$. \ }

Since $v$ is reduced, we get
 $$ \delta_k \leq \delta_3 
    \leq \frac{\delta_1+ \delta_2+\delta_3}{3}\leq {\lambda}/{3},$$
and so
$$ \delta_k \leq \lambda /3 < \delta_1. $$
By Corollary \ref{mcduffcor}, $E$ is one of the classes $E_{1\ell}$ 
for $\ell > 1$.
It remains to show that this can hold only if $v$ is in case (2b).

Indeed, we now rule out the cases (2a), (3a), and (3b).

{

If $v$ is in case (2a), for all $\ell >1$ we have
\begin{align}
\frac{1}{2\pi} \left< \Omega,E_{1\ell} \right> 
                      &= \lambda-\delta_1-\delta_\ell \nonumber\\
 &\geq \lambda -\delta_1-\delta_2 \nonumber\\
  &= \lambda_F - \delta_2 \nonumber\\
 & \geq \lambda_F/2 \nonumber\\
 &> \delta_k. \nonumber
 \end{align}

If $v$ is case (3a), for all $\ell >1$, we have
\begin{align}
\frac{1}{2\pi}\left< \Omega,E_{1\ell} \right>
 &= \lambda-\delta_1-\delta_\ell \nonumber\\
 &\geq \lambda -\delta_1-\delta_2 \nonumber\\
 &=\delta_{E_{12}} \nonumber\\
 &>\delta_k. \nonumber
\end{align}

If $v$ is in case (3b) then by Remark~\ref{special v}
$\delta_2 > \delta_3 = \ldots = \delta_k
 = \delta_{E_{12}}$, and for $\ell>2$,
\begin{align}
\frac{1}{2\pi} \left< \Omega,E_{1\ell} \right>
 &= \lambda-\delta_1-\delta_\ell \nonumber\\
 & \geq \lambda-\delta_1-\delta_3 \nonumber\\
 & \geq  \delta_2 \nonumber\\
 & > \delta_k. \nonumber
\end{align}

Thus, we have shown that in the cases (2a), (3a), (3b)
the class $E_{1\ell}$ cannot be minimal 
for any $2 \leq \ell \leq k$.
}

\end{proof}

\section{Uniqueness of reduced form}
\labell{sec:uniqueness}

Our goal in this section is to prove the following theorem,
which is the ``uniqueness" part of Theorem~\ref{theorem-1}.

\begin{Theorem} \labell{uniqueness}
Let $k \geq 3$.
Let $\omega$ and $\omega'$ be blowup forms on $M_k$
whose cohomology classes are encoded by the vectors
$$ v = (\lambda;\delta_1,\ldots,\delta_k) 
\qquad \text{ and } \qquad
   v' = (\lambda';\delta_1',\ldots,\delta_k') .$$
Suppose that $v$ and $v'$ are reduced.
Suppose that $(M_k,\omega)$ and $(M_k,\omega')$ are symplectomorphic.
Then $v=v'$. 
\end{Theorem}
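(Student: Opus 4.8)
The plan is to recover the reduced vector $v$ from the pair $(M_k,\omega)$ in a way that is manifestly invariant under symplectomorphism, so that $v=v'$ follows immediately. A symplectomorphism $\Phi\colon (M_k,\omega)\to(M_k,\omega')$ induces an isometry $\Phi_*$ of $H_2(M_k)$ for the intersection form which carries $[\omega']$-areas to $[\omega]$-areas and takes exceptional classes to exceptional classes (by the characterization in Lemma~\ref{calE and J}, since $\Phi_*$ preserves $c_1(TM_k)$ and $\GW$, these being diffeomorphism invariants by Lemma~\ref{deformation class}). So every quantity built only from the intersection form, $c_1$, the set of exceptional classes, and the $\omega$-areas of homology classes is a symplectomorphism invariant; the task is to extract $\lambda,\delta_1,\ldots,\delta_k$ from such data.

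First I would read off $\delta_k$: by Lemma~\ref{deltak is minimal again}, $\delta_k$ is the minimal $\omega$-area (divided by $2\pi$) of an exceptional class, so $\delta_k=\delta_k'$. Next I would use the classification of $\calE^v_{\min}$ in Theorem~\ref{thm:Emin} (equivalently Corollary~\ref{ABCD}): the cardinality and combinatorial type of $\calE^v_{\min}$ distinguishes the ``generic'' case (D) from the degenerate cases (A), (B), (C), and in those degenerate cases Remark~\ref{special v} pins down $v$ almost completely in terms of $\lambda$ and $\delta_k$ alone, so those cases can be dispatched quickly. The heart of the argument is the generic situation, where $\calE^v_{\min}=\{E_{j+1},\ldots,E_k\}$; here I would proceed by downward induction, peeling off the smallest blowup. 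The idea is that blowing down $E_k$ (i.e.\ passing to $M_{k-1}$, or equivalently restricting attention to the sublattice and using the compatibility of $c_1$ and $\GW$ under $H_2(M_{k-1})\to H_2(M_k)$ noted in the excerpt) produces a blowup form on $M_{k-1}$ whose vector is $(\lambda;\delta_1,\ldots,\delta_{k-1})$, still reduced, and a symplectomorphism of the $M_k$'s should descend to one of the $M_{k-1}$'s. Iterating, one recovers all the $\delta_i$ and then $\lambda$ from the total area or from $c_1$.

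The main obstacle I anticipate is exactly this ``blowing down'' step: a symplectomorphism of $M_k$ need not send $E_k$ to $E_k$ (it sends $E_k$ to \emph{some} minimal exceptional class, which generically is one of $E_{j+1},\ldots,E_k$, all of the same area $\delta_k$), and even once one matches up the minimal classes one must argue that the induced map on the orthogonal complement $E_k^{\perp}$ (or on $H_2(M_{k-1})$) is again realized by a symplectomorphism of the blown-down manifold, not merely a lattice isometry. The clean way around this is probably not to blow down geometrically but to argue purely on the level of invariants: show that the finite set of \emph{values} $\{\tfrac{1}{2\pi}\langle[\omega],E\rangle : E\text{ exceptional}\}$, together with the combinatorial structure (which sums $\langle[\omega],E\rangle+\langle[\omega],E'\rangle$ equal $\lambda$, etc.), already determines $\lambda$ and the multiset $\{\delta_1,\ldots,\delta_k\}$, and then invoke the reducedness of $v$ and $v'$ to conclude the ordered vectors coincide. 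Either way, the proof should reduce to bookkeeping with Theorem~\ref{thm:Emin} and Corollary~\ref{ABCD}; the one genuinely delicate point is handling the boundary cases where several $\delta_i$ coincide and the minimal-area exceptional classes fail to single out $E_k$ canonically.
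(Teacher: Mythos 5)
Your setup is the same as the paper's: read off $\delta_k$ as the minimal area of an exceptional class, split into the cases of Theorem~\ref{thm:Emin}/Corollary~\ref{ABCD}, and induct by blowing down. But the step you yourself flag as ``the main obstacle'' is exactly the crux, and you do not resolve it; you offer two escape routes and complete neither. The geometric route is salvageable, but not by blowing down $E_k$ alone: the correct move (and the one the paper makes) is to blow down along \emph{all} the minimal exceptional classes simultaneously. The set $\calE^v_{\min}$ is canonically determined by $(M_k,\omega)$ and is carried to $\calE^{v'}_{\min}$ by any symplectomorphism (Lemma~\ref{lemsymp again}); in cases (C) and (D) its elements are pairwise disjoint, so one can blow down along disjoint embedded spheres representing all of them, and by the uniqueness of blowdowns the symplectomorphism descends to the resulting manifolds. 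In case (D) this yields $(M_j,\ol\omega)$ with the still-reduced vector $(\lambda;\delta_1,\ldots,\delta_j)$ --- note that several $\delta_i$'s are stripped off at once, which is what makes the induction well-founded and sidesteps the non-canonicity of $E_k$. Your alternative route --- that the multiset of areas of exceptional classes ``already determines $\lambda$ and the multiset of the $\delta_i$'' --- is an unproved and nontrivial claim; the paper in fact needs strictly more than this invariant (disjointness data and a blowdown identification), so you cannot wave it through with ``probably.''

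There is also a concrete error in your dispatch of the degenerate cases. In case (C), $v=(\lambda;\delta_1,\delta_2,\delta_{E_{12}},\ldots,\delta_{E_{12}})$ has \emph{three} free parameters, while $\delta_k=\delta_k'$ and the $c_1$-pairing identity~\eqref{omega times c1} supply only two equations; Remark~\ref{special v} does not pin $v$ down ``in terms of $\lambda$ and $\delta_k$ alone.'' The missing input is that the blowdown along $\calE^v_{\min}$ in this case is $S^2\times S^2$ with a split form $\omega_{a,b}$, $a=\lambda-\delta_2$, $b=\lambda-\delta_1$, and the uniqueness of the parameters $(a,b)$ up to ordering (Lemma~\ref{ab}) recovers $\delta_1$ and $\delta_2$ separately. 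Without this (or an equivalent argument) case (C) is not closed. Cases (A) and (B) do go through as you indicate, using $\delta_k=\delta_k'$ together with~\eqref{omega times c1}.
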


Let $(M,\omega)$ be a closed symplectic four-manifold
and $C$ an embedded symplectic sphere of self intersection $-1$.
We recall that a choice of Weinstein tubular neighbourhood of $C$
determines a \emph{symplectic blow-down} $(\ol{M},\ol{\omega})$
of $(M,\omega)$ along $C$, and that we have a natural splitting
\begin{equation} \labell{bdsplit}
  H_2(M) = H_2(\ol{M}) \oplus \Z [C] .  
\end{equation}

We also recall the ``uniqueness of blow downs":
if $C_1$ and $C_2$ are two spheres as above and are in the same homology class, 
and if $(\ol{M}_1,\ol{\omega}_1)$ and $(\ol{M}_2,\ol{\omega}_2)$
are blow-downs of $(M,\omega)$ with respect to some choices of 
Weinstein tubular neighbourhoods of $C_1$ and $C_2$,
then there is a symplectomorphism between 
$(\ol{M}_1,\ol{\omega}_1)$ and $(\ol{M}_2,\ol{\omega}_2)$
that induces the identity map on the second homology
with respect to the decompositions~\eqref{bdsplit}.
An argument for this was given by McDuff in \cite[\S 3]{rational-ruled};
for details, see \cite[Lemma A.1]{ke}.

Finally, suppose that $(\ol{M},\ol{\omega})$ is obtained from $(M,\omega)$
by a symplectic blowdown along a sphere $C$ with respect to some
Weinstein neighbourhood of $C$,
and let $\psi \colon (M,\omega) \to (M',\omega')$ be a symplectomorphism.
Then $\psi$ descends to a symplectomorphism 
from $(\ol{M},\ol{\omega})$ to the manifold $(\ol{M}',\ol{\omega}')$,
that is obtained from $(M',\omega')$
by a symplectic blowdown along $C' := \psi(C)$ with respect to the 
Weinstein tubular neighbourhood that is determined by $\psi$.

\bigskip

\begin{Lemma} \labell{blowdown Ek}
Let $\omega$ be a blowup form on $M_k$.
Let $(\v)$ be the vector that encodes the cohomology class $[\omega]$.
Then there exists an embedded $\omega$-symplectic sphere in the class $E_k$.
For every such a sphere, blowing down along it
yields a symplectic manifold 
that is symplectomorphic to $(M_{k-1},\ol{\omega})$,
where $\ol{\omega}$ is a blowup form, 
and where the cohomology class $[\ol{\omega}]$
is encoded by the vector $(\lambda; \delta_1, \ldots, \delta_{k-1})$.
\end{Lemma}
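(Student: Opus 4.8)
The plan is to produce the required sphere, reduce to a convenient such sphere via ``uniqueness of blow downs'', and then identify the blow-down using the disjoint spheres supplied by the blowup form. Existence of an embedded $\omega$-symplectic sphere in the class $E_k$ is immediate from the definition of a blowup form: it provides pairwise disjoint embedded $\omega$-symplectic spheres $S_L,S_1,\ldots,S_k$ in the classes $L,E_1,\ldots,E_k$, and in particular an embedded $\omega$-symplectic sphere $S_k$ in the class $E_k$, whose self-intersection equals $E_k\cdot E_k=-1$; so a symplectic blow-down along $S_k$, or along any other embedded $\omega$-symplectic $(-1)$-sphere in the class $E_k$, is defined.

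Fix any embedded $\omega$-symplectic $(-1)$-sphere $C$ in the class $E_k$ together with a Weinstein tubular neighbourhood of it, and let $(\ol M,\ol\omega)$ be the resulting blow-down, with the splitting $H_2(M_k)=H_2(\ol M)\oplus\Z E_k$ of~\eqref{bdsplit}; since $E_k\cdot E_k=-1$, the summand $H_2(\ol M)$ is the intersection-orthogonal complement $E_k^{\perp}=\Z L\oplus\Z E_1\oplus\cdots\oplus\Z E_{k-1}$. By ``uniqueness of blow downs'', the blow-down $(\ol M,\ol\omega)$ depends, up to symplectomorphism respecting this splitting, only on the class $E_k$ and not on the choice of $C$ or of its Weinstein neighbourhood; since the conclusion of the lemma is preserved under such symplectomorphisms, I may take $C=S_k$. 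Blowing down along $S_k$: the spheres $S_L,S_1,\ldots,S_{k-1}$ are disjoint from $S_k$, so for a small enough Weinstein neighbourhood they lie in the region where $\ol\omega$ coincides with $\omega$, and hence descend to pairwise disjoint embedded $\ol\omega$-symplectic spheres $\ol S_L,\ol S_1,\ldots,\ol S_{k-1}$ in $\ol M$, representing the classes $L,E_1,\ldots,E_{k-1}$ in $H_2(\ol M)=E_k^{\perp}$ and having unchanged areas, so that $\frac{1}{2\pi}\langle[\ol\omega],L\rangle=\lambda$ and $\frac{1}{2\pi}\langle[\ol\omega],E_j\rangle=\delta_j$ for $1\le j\le k-1$. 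The sphere $\ol S_L$ has self-intersection $+1$ and is disjoint from the $(-1)$-spheres $\ol S_1,\ldots,\ol S_{k-1}$; by the Gromov--McDuff structure theory underlying Lemma~\ref{deformation class} (see \cite{rational-ruled}, and \cite{algorithm} for the precise statements needed), this identifies $(\ol M,\ol\omega)$ with $M_{k-1}$ equipped with a blowup form, via a diffeomorphism carrying the classes of $\ol S_L,\ol S_1,\ldots,\ol S_{k-1}$ to the standard generators $L,E_1,\ldots,E_{k-1}$ of $H_2(M_{k-1})$. Under this identification the displayed area identities say precisely that $[\ol\omega]$ is encoded by $(\lambda;\delta_1,\ldots,\delta_{k-1})$, which is the assertion of the lemma.

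I expect the main obstacle to be this last step: passing from ``$(\ol M,\ol\omega)$ contains the disjoint symplectic spheres $\ol S_L,\ol S_1,\ldots,\ol S_{k-1}$'' to ``$(\ol M,\ol\omega)$ is symplectomorphic to $M_{k-1}$ with a blowup form, compatibly with the homology classes''. This rests on the structure theory for symplectic $4$-manifolds containing symplectic spheres of non-negative self-intersection (McDuff \cite{rational-ruled}) and on keeping track of the homology classes through the blow-down; as the paper does for Lemmas~\ref{deformation class} and~\ref{coh implies diff}, I would invoke the companion manuscript \cite{algorithm} for the exact formulations rather than reproving them here. The remaining steps --- the existence of the sphere, and the reduction to $S_k$ via ``uniqueness of blow downs'' --- are routine.
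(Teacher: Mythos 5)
Your outline is correct and is essentially the argument the paper has in mind: the paper itself gives no proof of Lemma~\ref{blowdown Ek} here, deferring entirely to the companion manuscript \cite{algorithm}, and your steps (existence of the sphere from the definition of a blowup form, reduction to the standard sphere $S_k$ via uniqueness of blow-downs, descent of the remaining disjoint spheres $S_L,S_1,\ldots,S_{k-1}$ with their areas, and identification of the blow-down with $M_{k-1}$ carrying a blowup form via McDuff's classification of symplectic four-manifolds containing a symplectic sphere of square $+1$) are exactly the standard route. You correctly locate the only non-routine point --- upgrading the existence of the descended spheres to a diffeomorphism of the blow-down with $M_{k-1}$ that matches the homology bases, so that $\ol\omega$ is a blowup form encoded by $(\lambda;\delta_1,\ldots,\delta_{k-1})$ --- and your deferral of its precise formulation to \cite{rational-ruled} and \cite{algorithm} is consistent with how the paper treats it.
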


For details, see \cite{algorithm}.

\bigskip

To proceed, we will need to identify the two-point blowup $M_2$ of $\CP^2$
with the one-point blowup of $S^2 \times S^2$.
We have a decomposition 
$$ H_2(S^2 \times S^2) = \Z B \oplus \Z F $$
where $B = [ S^2 \times \{ \text{point} \}]$ is the ``base class" 
and $F = [ \{ \text{point} \} \times S^2 ]$ is the ``fibre class".
For positive real numbers $a,b$ we consider the split symplectic form
$$ \omega_{a,b} = a \tau_{S^2} \oplus b \tau_{S^2} $$ 
where $\tau_{S^2}$ is the rotation invariant area form on $S^2$,
normalized such that $\protect{\frac{1}{2\pi} \int_{S^2} \tau_{S^2} = 1}$.

\begin{Lemma} \labell{ab}
Suppose that $a \geq b > 0$ and $a' \geq b' > 0$.
Suppose that $(S^2 \times S^2 , \omega_{a,b})$ 
and $(S^2 \times S^2 , \omega_{a',b'})$ are symplectomorphic.
Then $a=a'$ and $b=b'$. 
\end{Lemma}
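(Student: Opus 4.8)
The plan is to extract $a,b$ from the symplectic manifold $(S^2\times S^2,\omega_{a,b})$ in a way that is manifestly invariant under symplectomorphism, and then to conclude. The volume is a symplectomorphism invariant: since $\tfrac{1}{(2\pi)^2}\int_{S^2\times S^2}\omega_{a,b}^2/2 = ab$, a symplectomorphism forces $ab = a'b'$. So it suffices to recover one more invariant, say $a+b$, or equivalently the set $\{a,b\}$. For this I would use the values that $\omega_{a,b}$ takes on embedded symplectic spheres of square zero. The classes $B$ and $F$ have square zero and are represented by embedded symplectic spheres with areas (normalized by $1/2\pi$) equal to $b$ and $a$ respectively. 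The key input is that, up to the action of orientation-preserving self-diffeomorphisms, these are essentially the only square-zero classes carrying embedded symplectic spheres with controlled area; more precisely, any other square-zero class $mB+nF$ with $mn\ge 0$ has $\omega_{a,b}$-area $mb+na$, and when $a\ge b$ the minimum of the $1/2\pi$-area over all classes represented by embedded symplectic spheres of square $0$ that are ``primitive'' is exactly $b = \min\{a,b\}$.

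The cleanest route is to reduce to the already-established machinery of the paper. First I would identify the one-point blowup of $(S^2\times S^2,\omega_{a,b})$ with a two-point blowup $M_2$ of $\CP^2$: blowing up a small ball of capacity $\varepsilon$ yields a blowup form on $M_2$, and under the standard change of basis $L = B+F-\varepsilon E$, $E_1 = F-\varepsilon' E$, $E_2 = B - \varepsilon'' E$ (the precise bookkeeping is the routine part) the cohomology class is encoded by a vector whose first three entries are determined by $a,b,\varepsilon$. Concretely, blowing up gives a vector of the form $(a+b-\varepsilon;\,a-\varepsilon,\,b-\varepsilon)$ up to ordering for small $\varepsilon>0$ (assuming $a \ge b$, this is already reduced once $\varepsilon$ is small). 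A symplectomorphism $(S^2\times S^2,\omega_{a,b})\cong(S^2\times S^2,\omega_{a',b'})$ induces, after blowing up equal-size balls, a symplectomorphism of the corresponding $M_2$'s preserving the blowup-form property; by Lemma~\ref{lem:small k} (the case $k=2$) the two encoding vectors must then agree as unordered triples, i.e.\ $\{a+b-\varepsilon, a-\varepsilon, b-\varepsilon\} = \{a'+b'-\varepsilon, a'-\varepsilon, b'-\varepsilon\}$. Letting $\varepsilon\to 0$ (or just comparing the three numbers) and using $a\ge b$, $a'\ge b'$, this gives $a=a'$ and $b=b'$.

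The main obstacle I anticipate is the first step: legitimately passing between $(S^2\times S^2,\omega_{a,b})$ and $M_2$ while tracking (i) that the blown-up form really is a blowup form on $M_2$ in the sense of this paper, so that Lemma~\ref{lem:small k} applies, and (ii) the exact relation between $(a,b,\varepsilon)$ and the encoding vector, including that the resulting vector is reduced when $\varepsilon$ is small and $a\ge b$. Both points are standard (the blowup construction is symplectically canonical for balls of small capacity, and the change of basis $H_2(M_2)\to H_2(\text{one-point blowup of }S^2\times S^2)$ is explicit), but they need to be stated carefully; the paper's companion manuscript \cite{algorithm} handles exactly this kind of bookkeeping, so I would cite it. An alternative that avoids blowing up entirely: directly invoke that square-zero embedded symplectic sphere classes of minimal $1/2\pi$-area are a symplectomorphism invariant (an analogue of Lemma~\ref{calE and J} for square-zero classes, via Gromov--McDuff theory for rational ruled surfaces), giving $\min\{a,b\}$ as an invariant; combined with the volume invariant $ab$ this again forces $\{a,b\}=\{a',b'\}$, hence $a=a'$ and $b=b'$ by the ordering hypothesis. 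Either way the final algebra is a one-line deduction from two invariants.
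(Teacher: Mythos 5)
Your argument is correct, but note that the paper does not actually prove Lemma~\ref{ab} internally: its ``proof'' is a citation to \cite[Lemma 4]{YK:max_tori}, so any self-contained argument is necessarily a different route. Both of your routes work. The second (volume gives $ab$; the minimal $\tfrac{1}{2\pi}$-area of a square-zero class represented by an embedded symplectic sphere gives $\min\{a,b\}=b$, using positivity of intersections with $B$ and $F$ to see that every such class is $mB$ or $nF$ with $m,n\ge 1$) is the cleaner one and is closest in spirit to the cited external lemma; it uses only invariants already set up in the paper. The first route (blow up a small ball of capacity $\eps$, identify with $M_2$, and invoke the $k=2$ case of Lemma~\ref{lem:small k}) is also legitimate and introduces no circularity, since the proof of Lemma~\ref{lem:small k} rests on Demazure's list of exceptional classes, disjointness, and the $c_1$-pairing, not on Lemma~\ref{ab}; but it leans on exactly the blowup bookkeeping (that the blown-up form is a blowup form on $M_2$, and uniqueness of symplectic blowups of a given capacity) that the paper itself only defers to \cite{algorithm}, so you trade one black box for another. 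Two small slips: the change of basis should be the integral one, $L=B+F-E$, $E_1=B-E$, $E_2=F-E$ (your ``$\eps E$'' coefficients are a typo --- the resulting vector $(a+b-\eps;\,a-\eps,\,b-\eps)$ you wrote is the correct one), and ``reduced'' is only defined for $k\ge 3$, so that remark is vacuous for $k=2$; neither affects the conclusion, since Lemma~\ref{lem:small k} for $k=2$ already allows the swap of $\delta_1$ and $\delta_2$ and the final comparison $\{a,b\}=\{a',b'\}$ together with $a\ge b$, $a'\ge b'$ gives $a=a'$, $b=b'$.
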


\begin{proof}
See \cite[Lemma 4]{YK:max_tori}.
\end{proof}

\begin{Lemma} \labell{blowdown E12}
Let $\omega$ be a blowup form on $M_2$.
Then there exists an embedded $\omega$-symplectic sphere 
in the class $E_{12} := L - E_1 - E_2$.
Moreover, for every such a sphere, blowing down along it
yields a symplectic manifold 
that is symplectomorphic to $(S^2 \times S^2 , \omega_{a,b})$,
with $a = \lambda - \delta_2$ and $b = \lambda - \delta_1$,
where $(\lambda;\delta_1,\delta_2)$ is the vector 
that encodes the cohomology class $[\omega]$.
\end{Lemma}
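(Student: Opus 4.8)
The plan is to first establish the existence of an embedded $\omega$-symplectic sphere in the class $E_{12} = L - E_1 - E_2$, and then identify the symplectic blow-down. For existence: the class $E_{12}$ is exceptional (it satisfies $E_{12} \cdot E_{12} = -1$, $c_1(TM_2)(E_{12}) = 3 - 1 - 1 = 1$, and $\GW(E_{12}) \neq 0$ by Lemma~\ref{gwlemma}), so by Lemma~\ref{calE and J}, applied with condition (c), every blowup form $\omega$ on $M_2$ admits an embedded $\omega$-symplectic sphere of self-intersection $-1$ in the class $E_{12}$. This handles the first assertion.

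For the identification of the blow-down: let $C$ be such a sphere, and let $(\ol M, \ol\omega)$ be the symplectic blow-down of $(M_2, \omega)$ along $C$ with respect to a Weinstein tubular neighbourhood. By the splitting~\eqref{bdsplit}, $H_2(\ol M) = E_{12}^\perp / \Z E_{12}$ inside $H_2(M_2) = E_{12}^{\perp} \oplus \Z E_{12}$; a direct computation identifies $\ol M$ diffeomorphically with the blow-down of $M_2$ at the second point, which is the one-point blowup $M_1$ of $\CP^2$ — but it is cleaner here to recognize $\ol M$ directly as $S^2 \times S^2$, since blowing down an exceptional sphere in the class $L - E_1 - E_2$ on the two-point blowup of $\CP^2$ yields $S^2 \times S^2$ (this is the standard identification of $M_2$ with the one-point blowup of $S^2 \times S^2$, obtained by blowing up $\CP^2$ at two points versus blowing up $S^2\times S^2$ at one point). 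One then reads off the homology correspondence: the base and fibre classes $B$ and $F$ of $S^2\times S^2$ correspond, under~\eqref{bdsplit}, to the images of $L - E_2$ and $L - E_1$ respectively (these span $E_{12}^\perp$ and have the correct intersection numbers $B\cdot B = F\cdot F = 0$, $B \cdot F = 1$). Since $\ol\omega$ is again a blowup/Kähler-type form on $S^2\times S^2$, and a blowup form is deformation equivalent to a Kähler form, it is — after a diffeomorphism acting trivially on homology, using Lemma~\ref{coh implies diff} or its analogue on $S^2\times S^2$ — cohomologous to a split form $\omega_{a,b}$; the parameters are determined by pairing: $\frac{1}{2\pi}\langle[\ol\omega], B\rangle = \frac{1}{2\pi}\langle[\omega], L - E_2\rangle = \lambda - \delta_2 =: a$ and $\frac{1}{2\pi}\langle[\ol\omega], F\rangle = \lambda - \delta_1 =: b$. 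Note $a \geq b > 0$ follows from $\delta_1 \geq \delta_2$ and the Gromov inequality $\delta_1 < \lambda$; this also matches the ordering convention implicit in Lemma~\ref{ab}. Finally, uniqueness of blow-downs (the ``uniqueness of blow downs'' cited above, from McDuff~\cite{rational-ruled}) guarantees the result is independent of the choice of sphere $C$ and of the Weinstein neighbourhood, up to a symplectomorphism inducing the identity on $H_2$.

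I expect the main obstacle to be the diffeomorphic identification of $\ol M$ with $S^2 \times S^2$ together with the bookkeeping of the homology isomorphism — verifying that blowing down $E_{12}$ on $M_2$ produces $S^2 \times S^2$ rather than, say, $M_1$, and that $B, F$ correspond to $L-E_2, L-E_1$. This is where the geometry of the two descriptions of $M_2$ (as $\CP^2$ blown up twice versus $S^2\times S^2$ blown up once) must be used carefully; once that identification is in place, the determination of $a$ and $b$ by pairing the cohomology class with $B$ and $F$ is routine, and Lemma~\ref{ab} is not even needed for this statement (it will be used later, in the proof of Theorem~\ref{uniqueness}). For the appeal to ``$\ol\omega$ is a blowup form'', one uses that the blow-down of a blowup form along an exceptional sphere is again of the appropriate type — this is exactly the kind of statement recorded in \cite{algorithm}, analogous to Lemma~\ref{blowdown Ek}.
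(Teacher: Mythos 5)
Your argument is sound, and it is worth noting that the paper itself gives no proof of this lemma in the text --- it only says ``For details, see \cite{algorithm}'' --- so your proposal is a reconstruction of the intended argument rather than a match against a written one. The two halves are right: existence follows from Remark~\ref{expexp} (which already records that $L-E_i-E_j$ is exceptional) together with condition (c) of Lemma~\ref{calE and J}; and the blow-down identification, including the correspondence $B \leftrightarrow L-E_2$, $F \leftrightarrow L-E_1$ spanning $E_{12}^\perp$ and the resulting areas $a=\lambda-\delta_2$, $b=\lambda-\delta_1$, is the standard picture of $M_2$ as the one-point blowup of $S^2\times S^2$. The one step you should pin down more precisely is the passage from ``the blow-down is a symplectic form on $S^2\times S^2$'' to ``it is diffeomorphic to the split form $\omega_{a,b}$'': this is not Lemma~\ref{coh implies diff} but McDuff's classification of symplectic forms on rational ruled surfaces \cite{rational-ruled} (every symplectic form on $S^2\times S^2$ is diffeomorphic to a split form whose parameters are the areas of $B$ and $F$), which is exactly the analogue you gesture at; with that citation in place the proof is complete.
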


For details, see \cite{algorithm}.

We will also use the following observations on symplectomorphisms 
between blow ups of $\CP^2$.
We say that homology classes are \emph{disjoint} if their intersection
product is zero.

\begin{Lemma} \labell{lemsymp again}
Let $\omega$ and $\omega'$ be blowup forms on $M_k$
whose cohomology classes are encoded by the vectors
$ (\lambda;\delta_1,\ldots,\delta_k) $
and
$ (\lambda';\delta_1',\ldots,\delta_k') $.
Let $\varphi \colon (M_k,\omega) \to (M_k,\omega')$
be a symplectomorphism,
and let $\varphi_* \colon H_2(M_k) \to H_2(M_k)$
be the induced map on the homology.

\begin{enumerate}
\item  The isomorphism $\varphi_*$ preserves the set of exceptional classes.

 \item The isomorphism $\varphi_*$ sends disjoint homology classes
to disjoint homology classes.

\item  The isomorphism $\varphi_*$ 
restricts to a bijection 
from the set of minimal exceptional classes in $(M_k,\omega)$
to the set of minimal exceptional classes in $(M_k,\omega')$,
and 
\begin{equation} \labell{eqphimin again}
 \delta_k = \delta_k'.
\end{equation}

\item 
\begin{equation} \labell{omega times c1}
   3 \lambda - \sum_{i=1}^{k}\delta_i
   = 3 \lambda' - \sum_{i=1}^{k}\delta'_i . 
\end{equation}
\end{enumerate}
\end{Lemma}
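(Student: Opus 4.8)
The plan is to deduce all four assertions from three structural properties of the induced homology automorphism $\varphi_* \colon H_2(M_k) \to H_2(M_k)$: (i) it preserves the intersection form, being induced by a diffeomorphism; (ii) it intertwines the symplectic-area functionals, $\tfrac{1}{2\pi}\langle[\omega'],\varphi_*A\rangle = \tfrac{1}{2\pi}\langle[\omega],A\rangle$ for all $A$, because $\varphi^*\omega'=\omega$; and (iii) it preserves the first Chern class, $\langle c_1(TM_k),\varphi_*A\rangle = \langle c_1(TM_k),A\rangle$, because $d\varphi$ identifies $\varphi^*TM_k$ with $TM_k$ and, by Lemma~\ref{deformation class}, $c_1(TM_k)$ is the same for all blowup forms.

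For part~(1) I would use the characterization of exceptional classes in Lemma~\ref{calE and J}. If $E$ is exceptional, then condition~(c) applied to the blowup form $\omega$ yields an embedded $\omega$-symplectic sphere of self-intersection $-1$ in the class $E$; its image under $\varphi$ is an embedded $\omega'$-symplectic sphere of self-intersection $-1$ in the class $\varphi_*E$, so condition~(a) applied to the blowup form $\omega'$ shows that $\varphi_*E$ is exceptional. Running the same argument for $\varphi^{-1}$ shows that $\varphi_*$ restricts to a bijection of the set of exceptional classes. Part~(2) is then immediate from property~(i): $\varphi_*A\cdot\varphi_*B = A\cdot B$.

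For part~(3), property~(ii) shows that $\varphi_*$ sends an exceptional class of minimal $\omega$-area to an exceptional class of the same, hence minimal, $\omega'$-area (the minimum exists and is finite by Remark~\ref{poslemma} and Lemma~\ref{no accumulation again}); together with~(1) this gives the asserted bijection between $\calE^v_{\min}$ and $\calE^{v'}_{\min}$. For the equality~\eqref{eqphimin again} I would invoke Lemma~\ref{deltak is minimal again}: when $v$ and $v'$ are reduced, $\delta_k$ is the minimal $\omega$-area over exceptional classes and $\delta_k'$ the minimal $\omega'$-area over exceptional classes, and these two minima coincide by~(ii). For part~(4) I would note that $\int_{M_k}[\omega]\cup c_1(TM_k)$ is a symplectomorphism invariant: it equals $\int_{M_k}\varphi^*([\omega']\cup c_1(TM_k)) = \int_{M_k}[\omega']\cup c_1(TM_k)$, using $\varphi^*\omega'=\omega$, $\varphi^*c_1(TM_k)=c_1(TM_k)$, and the fact that $\varphi$ preserves the canonical orientation. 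Since the Poincar\'e dual satisfies $\PD(c_1(TM_k)) = 3L - E_1 - \cdots - E_k$ (check against $c_1(TM_k)(L)=3$, $c_1(TM_k)(E_i)=1$ and the intersection form), this integral equals $2\pi(3\lambda - \sum_i \delta_i)$ for $\omega$ and $2\pi(3\lambda' - \sum_i \delta_i')$ for $\omega'$, which yields~\eqref{omega times c1}.

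All of these steps are short. The only one resting on substantial input is part~(1), which uses the Gromov--McDuff characterization of exceptional classes (Lemma~\ref{calE and J}). I expect the main subtlety to be the role of the reduced hypothesis in part~(3): without it the equality $\delta_k = \delta_k'$ can fail --- permuting the points $p_1,\ldots,p_k$ induces a symplectomorphism that permutes the $\delta_i$ --- so for~\eqref{eqphimin again} one genuinely needs Lemma~\ref{deltak is minimal again} together with the assumption that $v$ and $v'$ are reduced.
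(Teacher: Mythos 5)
Your proposal is correct and follows essentially the same route as the paper, which likewise derives \eqref{eqphimin again} from Lemma~\ref{deltak is minimal again} and \eqref{omega times c1} from the invariance of $\frac{1}{2\pi}\int_{M_k}\omega\wedge c_1(TM_k)$, deferring the routine verifications of (1) and (2) to the companion manuscript. Your closing observation is also on point: the equality $\delta_k=\delta_k'$ genuinely requires the encoding vectors to be reduced (and $k\geq 3$), a hypothesis the paper invokes implicitly through Lemma~\ref{deltak is minimal again} even though it is absent from the lemma's statement.
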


\begin{proof}
For details, see~\cite{algorithm}.
We note that~\eqref{eqphimin again} in the case $k \geq 3$
follows from Lemma~\ref{deltak is minimal again}
and that~\eqref{omega times c1} follows from
$\frac{1}{2\pi} \int_{M_k} \omega \wedge c_1(TM_k)
= \frac{1}{2\pi} \int_{M_k} \omega' \wedge c_1(TM_k)$.
\end{proof}

The properties of a symplectomorphism listed in Lemma \ref{lemsymp again} 
and the identification of exceptional classes when $k=1,2$ 
yield the characterization of the blowup forms
when $k \leq 2$ that was stated in Lemma~\ref{lem:small k}.

{
\begin{proof}[Proof of Lemma~\ref{lem:small k}]\ 
The fact that the vector that encodes the cohomology class
of a blowup form satisfies the conditions listed in the lemma
is by definition of a blowup form and by Gromov's packing inequality
\cite[0.3.B]{gromovcurves}.
The fact that the listed conditions on the vector are sufficient
for the cohomology class encoded by the vector to contain a blowup form
can be shown by toric constructions, see e.g., \cite{kk,algorithm}.

By Lemma~\ref{coh implies diff}, 
any two blowup forms whose cohomology classes
are encoded by the same vector are diffeomorphic.
When $k=2$, switching $\delta_1$ and $\delta_2$
can be realized by a diffeomorphism.
It remains to show that if two blowup forms are cohomologous
then the vectors that encode their cohomology classes
are equal or (when $k=2$) differ by switching $\delta_1$ and $\delta_2$.

\medskip\noindent\textbf{Suppose that $\mathbf{k =2}$. \ }
Suppose that there exists a symplectomorphism
from $(M_2,\omega_{\lambda;\delta_1,\delta_2})$
to $(M_2,\omega_{\lambda';\delta'_1,\delta'_2})$.
By Demazure \cite{demazure}, 
the set of exceptional classes in $M_2$ is $\{ E_1, E_2, E_{12} \}$,
and the only pair of disjoint exceptional classes is $\{ E_1, E_2 \}$. 
Because a symplectomorphism takes disjoint exceptional classes
to disjoint exceptional classes, 
$\{ \delta_1, \delta_2 \} = \{ \delta'_1, \delta'_2 \} $.
Because a symplectomorphism preserves the pairing of the symplectic
form with the first Chern class,
$3 \lambda - \delta_1 - \delta_2 = 3 \lambda' - \delta'_1 - \delta'_2$,
which further implies that $\lambda = \lambda'$.
Thus, $(\lambda',\delta'_1,\delta'_2)$
is equal to either $(\lambda,\delta_1,\delta_2)$
or to $(\lambda,\delta_2,\delta_1)$.
Conversely, these two vectors correspond to symplectomorphic manifolds.
For more details, see~\cite{algorithm}. 

\medskip\noindent\textbf{Suppose that $\mathbf{k =1}$. \ }
Suppose that there exists a symplectomorphism
from $(M_2,\omega_{\lambda;\delta_1})$
to $(M_2,\omega_{\lambda';\delta'_1})$. 
As noted in Remark \ref{min E for small k}, in this case $E_1$ is the only exceptional class.
Because a symplectomorphism must take an exceptional class
to an exceptional class,
the symplectomorphism  $(M_k,\omega) \to (M_k,\omega')$
takes the set $\{E_1\}$ to itself.  Thus, $\delta_1=\delta'_1$.
Because a symplectomorphism preserves the pairing of the symplectic
form with the first Chern class,
$3 \lambda - \delta_1 = 3 \lambda' - \delta'_1 $,
which further implies that $\lambda = \lambda'$.

\medskip\noindent\textbf{Suppose that $\mathbf{k =0}$. \ }
On $\CP^2$, 
if two blowup forms are diffeomorphic
then they take the same value on the generator of $H_2(\CP^2)$
on which this value is positive.
So they must have the same size.

\end{proof}
}

\begin{proof}[Proof of Theorem~\ref{uniqueness}] \

Corollary \ref{ABCD} implies that exactly one of the following possibilities 
for the vector $v = (\lambda;\delta_1,\ldots,\delta_k)$ occurs.
A similar list of possibilities holds for the vector
$v' = (\lambda';\delta_1',\ldots,\delta_k')$.

\begin{itemize}

\item[(A)]
Not every two minimal exceptional classes are disjoint,
and there exist $k$ pairwise disjoint minimal exceptional classes.

In this case,
$v = (\lambda;\lambda/3,\ldots,\lambda/3)$.

\item[(B)]
Not every two minimal exceptional classes are disjoint,
and there do not exist $k$ pairwise disjoint minimal exceptional classes.

In this case, 
$v = (\lambda; \delta_1, \lambda_F/2, \ldots, \lambda_F/2)$
and $\delta_1 > \lambda_F/2$.

\item[(C)]
Every two minimal exceptional classes are disjoint, 
and the blowdown of $(M,\omega)$ along all the minimal exceptional classes
yields a manifold that is symplectomorphic 
to $S^2 \times S^2$ with some split symplectic form $\omega_{a,b}$
with $a \geq b > 0$.

In this case, 
$v = (\lambda; \delta_1, \delta_2, \delta_{E_{12}}, \ldots, \delta_{E_{12}} )$,
with $\delta_2 > \delta_{E_{12}}$,
and the parameters $a,b$ are given by 
$a = \lambda - \delta_2$ and $b = \lambda - \delta_1$.

\item[(D)]
Every two minimal exceptional classes are disjoint,
and the blowdown of $(M,\omega)$ along all the minimal exceptional classes
yields a manifold that is symplectomorphic to $(M_j,\ol{\omega})$
for some $0 \leq j < k$, where $\ol{\omega}$ is a blowup class.

In this case, the cohomology class $[\ol{\omega}]$ 
is encoded in the vector $(\lambda;\delta_1,\ldots,\delta_j)$.
\end{itemize}

By items (2) and (3) of Lemma~\ref{lemsymp again},
either $(M_k,\omega)$ and $(M_k,\omega')$
are both in the case (A), or they are both in the case (B),
or they are both in the cases (C) or (D).

In the cases (C) or (D),
because a symplectomorphism between $(M_k,\omega)$ and $(M_k,\omega')$  descends to a symplectomorphism
between the blowdowns along the minimal exceptional spheres,
and because $S^2 \times S^2$ is not symplectomorphic (or even homeomorphic)
to any $M_j$, either both $(M_k,\omega)$ and $(M_k,\omega')$
are in the case (C) or they are both in the case (D).

\bigskip

Suppose $v$ and $v'$ are in case (A).
This means that $v = (\lambda ; {\lambda}/{3} , \ldots , \lambda/3)$
and $v'=(\lambda';\lambda'/3,\ldots,\lambda'/3)$.
Substituting in~\eqref{omega times c1},
the resulting equation implies that $\lambda=\lambda'$, and thus $v=v'$.

Suppose $v$ and $v'$ are in case (B).
This means that
$v = (\lambda ; \delta_1 , {\lambda_{F}}/2 , \ldots , \lambda_F/2)$
and
$v' = (\lambda' ; \delta'_1 , \lambda'_F/2 , \ldots , \lambda'_F/2)$.
Substituting in~\eqref{eqphimin again} and in~\eqref{omega times c1},
and recalling that $\lambda_F = \lambda - \delta_1$ 
and $\lambda'_F = \lambda' - \delta'_1$, 
we get two linearly independent equations 
that imply that $\lambda = \lambda'$ and $\delta_1 = \delta_1'$,
and thus $v=v'$.

Suppose $v$ and $v'$ are in case (C).
Then
$v = (\lambda; \delta_1,\delta_2,\delta_{E_{12}},\ldots,\delta_{E_{12}})$
and 
 $v' = 
   (\lambda'; \delta'_1,\delta'_2,\delta'_{E_{12}},\ldots,\delta'_{E_{12}})$.
By \eqref{eqphimin again}, we get
\begin{equation}  \labell{eqd3}
\delta_{E_{12}} = \delta'_{E_{12}}.
\end{equation}
Because the symplectomorphism descends to a symplectomorphism
between the blowdowns along the minimal exceptional spheres,
and by Lemma~\ref{ab}, we obtain that
$$ \delta_1 + \delta_{E_{12}} = \delta_1' + \delta_{E_{12}}' 
\quad \text{ and } \quad
 \delta_2 + \delta_{E_{12}} = \delta_2' + \delta_{E_{12}}' .$$
By this and~\eqref{eqd3}, we get that 
\begin{equation} \labell{same delta12}
\delta_1 = \delta_1' \quad \text{ and } \delta_2 = \delta_2'.
\end{equation}
Substituting in~\eqref{omega times c1},
we get that $\lambda = \lambda'$.  Thus, $v = v'$.

Suppose $v$ and $v'$ are in case (D).
Because the symplectomorphism descends to a symplectomorphism
between the blowdowns along the minimal exceptional spheres,
we obtain a symplectomorphism between
$(M_j,\ol{\omega})$ and $(M_j,\ol{\omega}')$,
where $[\ol{\omega}]$ is encoded in the vector 
$\ol{v} = (\lambda, \delta_1, \ldots , \delta_j)$ 
and $[\ol{\omega}']$ is encoded in the vector 
$\ol{v}' = (\lambda', \delta_1', \ldots , \delta_j')$. 
Because the vectors $\ol{v}$ and $\ol{v'}$ are reduced,
we can continue by induction.
\end{proof}

\begin{noTitle}
[\textbf{Algorithm to determine whether two blowup forms are diffeomorphic}]
\labell{algorithm 2}
Suppose that $k \geq 3$.
Let $\omega$ and $\omega'$ be blowup forms on $M_k$,
and let $v$ and $v'$ be the vectors that encode 
their cohomology classes.
Apply to each of $v$ and $v'$ the algorithm of paragraph~\ref{r:algorithm}
to obtain reduced vectors $v_{\red}$ and $v'_{\red}$.
Then $\omega$ and $\omega'$ are diffeomorphic
if and only if $v_{\red} = v'_{\red}$.

Indeed, as noted in paragraph~\ref{r:algorithm},
the vectors $v_{\red}$ and $v'_{\red}$
encode cohomology classes of blowup forms $\omega_{\red}$
and $\omega'_{\red}$ that are, respectively, 
diffeomorphic to $\omega$ and to $\omega'$.
If $\omega$ and $\omega'$ are diffeomorphic,
then so are $\omega_{\red}$ and $\omega'_{\red}$,
and, by Theorem~\ref{uniqueness}, we conclude that $v_{\red} = v'_{\red}$.
Conversely, if $v_{\red} = v'_{\red}$,
then $\omega_{\red}$ and $\omega'_{\red}$ are diffeomorphic 
by Lemma~\ref{coh implies diff},
and then $\omega$ and $\omega'$ are diffeomorphic.

If $k=0$, $k=1$, or $k=2$, two blowup forms on $M_k$ are diffeomorphic
if and only if the vectors that encode their cohomology classes are equal
or (in the case $k=2$) differ by switching $\delta_1$ and $\delta_2$.
This follows from Lemma~\ref{lem:small k}.
\end{noTitle}

\begin{Remark}
Zhao, Gao, and Qiu gave another version of ``uniqueness of reduced form" 
\cite{ZGQ}.  They only refer to integral classes.
They work with the slightly different notion of ``reduced form" 
that we described in Remark~\ref{variation of reduced}.
They identify the group that is generated by the relevant
Lorentzian reflections
with the Weyl group of a certain Kac-Moody Lie algebra,
and they rely on properties of such Weyl groups. 
\end{Remark}

\section{Characterization of blowup classes}
\labell{sec:characterization}

In this section we give an algorithm that determines if a cohomology class 
contains a blowup form. The cone of classes of blowup forms on $M_k$ 
is described by Li-Li \cite{Li-Li02} and Li-Liu \cite{Li-Liu}, 
following the work of Biran \cite{biran:packing,biran:divisors}
and McDuff \cite{isotopy}, 
and is explained in McDuff-Schlenk \cite[\S 1.2]{mcsc}. 
We rely on the following two facts.

\begin{enumerate}
\item
Let $\Omega \in H^2(M_k;\R)$ be a cohomology class
that is encoded by a vector in the forward positive cone. 
Then $\Omega$ is the cohomology class of a blowup form on $M_k$
if and only if 
$\left< \Omega , E \right>$ is positive 
for every exceptional class $E$ on $M_k$.

\item
Every exceptional class $E$ on $M_k$ can be obtained from $E_1$
by a sequence of applications of the transformations on $H_2(M_k)$
that induce the Cremona transformation
and the permutations of the $\delta_j$s.
\end{enumerate}

\begin{Remark}
The fact that the cohomology class of every blowup form
satisfies Condition~(1) follows from our definition
of ``exceptional class" (Definition~\ref{new def exceptional} and
Lemma~\ref{calE and J},
which, in turn, relies on Lemma~\ref{deformation class}).

In the works that we quote above,
the authors consider symplectic forms with a standard canonical class,
that is, for which the first Chern class $c_1(TM)$ 
is the same as for blowup forms;
in our notation (Definition \ref{encode}), 
this class is encoded by the vector $(3;1,\ldots,1)$.
And by ``exceptional class",
they refer to a homology class $E$ that is represented by a smoothly embedded
sphere with self intersection $-1$ and such that $c_1(TM)(E) = 1$.
These authors show that a cohomology class $\Omega$ contains a symplectic form 
with standard canonical class if and only if 
it satisfies the two conditions that we listed in (1)
with their interpretation of ``exceptional class".

To use their work, we need to note that 
every homology class that is ``exceptional" in their sense
is also exceptional in our sense, and that every symplectic form
with standard canonical class is a blowup form.

These facts follow from results that are given in 
Part 2 of Lemma 3.5 of \cite{Li-Liu}:
let $\omega$ be a symplectic form with standard canonical class.
\begin{itemize}
\item[--]
If $E$ is an exceptional class in the sense of Li-Li-Liu,
then $E$ is represented by an embedded $\omega$-symplectic sphere.
\item[--]
Every finite set of exceptional classes in the sense of Li-Li-Liu
that are pairwise disjoint (with respect to the intersection form)
is represented by a finite set of embedded $\omega$-symplectic spheres
that are pairwise disjoint (as sets).
\end{itemize}
The first of these results also appeared 
as the ``$-1$ curve theorem" in Theorem A of \cite{liliu-ruled},
which implies that, for every symplectic form on $M$,
if $E$ is an exceptional class in the sense of Li-Li-Liu 
and its pairing with $c_1(TM)$ is positive 
then either $E$ or $-E$ can be represented by an embedded symplectic sphere. 
Li and Liu prove this result using a method of Taubes \cite{taubes-SWG}.

Given a finite set of exceptional classes in the sense of Li-Li-Liu that are
pairwise disjoint with respect to the intersection form,
there exists an $\omega$-tamed almost complex structure $J$ for which 
there exists an
embedded $J$-holomorphic sphere in each of the classes in the set.
This follows from the first result above,
together with 
the Hofer-Lizan-Sikorav regularity criterion \cite{HLS} 
(see also \cite[Lemma 3.3.3]{nsmall}) and the implicit function theorem,
see \cite[Chapter 3]{nsmall}. 
These
spheres are disjoint, as follows from the positivity of intersections of
$J$-holomorphic spheres in four-dimensional manifolds,
see \cite[Appendix E and Proposition 2.4.4]{nsmall},
and the fact that the classes in the given set are pairwise disjoint.
This yields the second result above.

In particular, the classes $E_1,\ldots,E_k$ of the exceptional divisors are
represented by disjoint embedded $\omega$-symplectic spheres. Blowing down
along $k$ disjoint embedded $\omega$-symplectic spheres in the classes
$E_1,\ldots,E_k$ yields a symplectic manifold that is diffeomorphic to
$\CP^2$. By a result of Gromov 
\cite[2.4 $B_2'$ and 2.4 $B_3'$]{gromovcurves}
and a theorem of Taubes, which
uses Seiberg-Witten invariants to guarantee the existence of a
symplectically embedded two-sphere \cite{taubes-SWG-1999},
this resulting manifold is
symplectomorphic to $\CP^2$ with a multiple of the Fubini-Study form 
and $L$ is represented by a symplectically embedded sphere. 
See \cite[Example 3.4]{salamon}. 
We conclude that $\omega$ is a blowup form. Then Lemma \ref{calE and J} and
the first result above show that every exceptional class in the sense of
Li-Li-Liu is also exceptional in our sense. 

\end{Remark}

\begin{Lemma} \labell{reduced is symplectic}
Let $k \geq 3$.
Let $\Omega$ be a cohomology class that is encoded by a vector $(\v)$ 
with positive entries that is reduced.
Suppose that $\Omega$ has positive square.
Then $\Omega$ contains a blowup form.
\end{Lemma}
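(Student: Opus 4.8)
The plan is to use the two facts about blowup classes quoted at the start of Section~\ref{sec:characterization}: a cohomology class $\Omega$ encoded by a vector in the forward positive cone contains a blowup form if and only if $\langle\Omega,E\rangle>0$ for every exceptional class $E$, and every exceptional class is obtained from $E_1$ by a sequence of Cremona transformations and permutations of the $\delta_j$. So the whole task reduces to checking the inequality $\langle\Omega,E\rangle>0$ for all exceptional $E$. Since $\Omega$ is encoded by a reduced vector with positive entries and positive square, it lies in the forward positive cone, so the ``only if'' direction is automatic and we only need the positivity of the pairing with exceptional classes.

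First I would dispose of the classes $E_1,\dots,E_k$: by hypothesis $\langle\Omega,E_j\rangle/2\pi=\delta_j>0$ for each $j$. Then I would invoke Corollary~\ref{mcduffcor} (the McDuff-type estimate). That corollary says that, for a reduced vector $(\v)$ with positive entries, and for \emph{any} exceptional class $E$, one has $\frac{1}{2\pi}\langle\Omega,E\rangle\ge\delta_k$; moreover, if $E$ is not one of $E_1,\dots,E_k$, then $\frac{1}{2\pi}\langle\Omega,E\rangle\ge\lambda-\delta_1-\delta_2$. Since $\delta_k>0$ by hypothesis, the first estimate already gives $\langle\Omega,E\rangle>0$ for every exceptional class $E$ — positivity of $\delta_k$ is exactly what we assumed. (One could alternatively note that $\lambda-\delta_1-\delta_2\ge\delta_3>0$ because the vector is reduced, handling the classes other than $E_1,\dots,E_k$ that way; but the $\ge\delta_k$ bound is cleaner since it is uniform.) Hence $\langle\Omega,E\rangle>0$ for all exceptional $E$, and Fact~(1) of Section~\ref{sec:characterization} produces a blowup form in the class $\Omega$.

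The only thing to be careful about is that Corollary~\ref{mcduffcor} is phrased for an exceptional class, which by Definition~\ref{new def exceptional} satisfies conditions (a),(b),(c) of Lemma~\ref{calE and J}; in particular $c_1(TM_k)(E)=1$, $\GW(E)\ne0$, and $E$ is represented by a $J$-holomorphic sphere for a blowup-tamed $J$, so the hypotheses of the corollary are met. The positivity of square of $\Omega$ (equivalently the volume inequality $\lambda^2-\delta_1^2-\dots-\delta_k^2>0$) is used only to guarantee that $\Omega$ sits in the forward positive cone so that Fact~(1) applies; it is not needed for the pairing estimate itself. I do not expect any real obstacle here: the substantive work — the estimate $\frac{1}{2\pi}\langle\Omega,E\rangle\ge\delta_k$ for reduced vectors — has already been carried out in Corollary~\ref{mcduffcor} (equivalently in Lemma~\ref{deltak is minimal again}), so this lemma is essentially a corollary of it combined with the Li--Li / Li--Liu characterization of symplectic cones. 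The one genuinely external input is Fact~(1), whose justification the paper attributes to Li--Li, Li--Liu, Biran, and McDuff.
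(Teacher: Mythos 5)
Your proposal is correct and matches the paper's own proof: the paper likewise deduces $\frac{1}{2\pi}\left<\Omega,E\right>\geq\delta_k>0$ for every exceptional class $E$ (citing Lemma~\ref{deltak is minimal again}, which is the same estimate as in Corollary~\ref{mcduffcor}) and then concludes via Fact~(1) of Section~\ref{sec:characterization}.
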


\begin{proof}
By Lemma~\ref{deltak is minimal again},
for every exceptional class $E$ in $H_2(M_k)$,
we have $\protect{\frac{1}{2\pi} \left< \Omega , E \right> \geq \delta_k}$,
and in particular $\left< \Omega , E \right> > 0$.
The result then follows from the above fact~(1).
\end{proof}

\bigskip

\begin{proof}[Proof of Theorem~\ref{theorem-2}]
Theorem~\ref{theorem-2}\ follows from
Lemma~\ref{finitesteps2}, Lemma~\ref{cremona is diffeo 2},
Lemma~\ref{reduced is symplectic},
and the fact that the cohomology class of any blowup form
is encoded by a vector with positive entries 
that satisfies the volume inequality.
\end{proof}

\bigskip

 \

\begin{noTitle}
[\textbf{Algorithm that, given a cohomology class in $\mathbf{H^2(M_k;\R)}$,
determines whether or not it contains a blowup form}]
\labell{algorithm 3}

The cases $k=0,1,2$ have been addressed in Lemma~\ref{lem:small k}.
Suppose that $k \geq 3$.

Let $v$ denote the vector that encodes the cohomology class.
If $v$ is not in the forward positive cone 
then the cohomology class does not contain any blowup form.
If $v$ is in the forward positive cone, apply the algorithm 
of paragraph~\ref{r:algorithm} to obtain $v_{\red}$. 
If the entries of $v_{\red}$ are all positive, 
then the given cohomology class contains a blowup form.
Otherwise, it does not.

Indeed, by the definition of a blowup form, 
a vector that encodes the cohomology class of a blowup form must be 
in the forward positive cone. As noted in paragraph~\ref{r:algorithm}, 
if $v$ is in the forward positive cone, so is $v_{\red}$ and $v$ encodes 
the cohomology class of some blowup form if and only if $v_{\red}$ does. 
If the entries of $v_{\red}$ are all positive, then 
by Lemma \ref{reduced is symplectic}, the cohomology class encoded 
by $v_{\red}$ contains a blowup form. If the entries of $v_{\red}$ 
are not all positive then, by the definition of a blowup form, 
it cannot encode the cohomology class of a blowup form.
\end{noTitle}

 \

\end{document}